\newcommand{\QQ}{{\cal Q}}
\newcommand{\CC}{{\cal C}}
\newcommand{\PP}{{\cal P}}
\newtheorem{theorem}{Theorem}[section]
\newtheorem{corollary}[theorem]{Corollary}
\newtheorem{lemma}[theorem]{Lemma}
\newtheorem{observation}[theorem]{Observation}
\def\newclaim#1#2{
   \global\advance\claimno by 1\relax
   \bigskip\noindent\rlap{\rm(\the\claimno)}\ignorespaces
   \global\expandafter\edef\csname CLAIMLABEL#1\endcsname{\the\claimno}\relax
   \hangindent=33pt\hskip30pt{\sl#2}\bigskip}
\def\mylabel#1{{\label{#1}}}
\def\junk#1{}
\begin{document}
\title{Three-coloring triangle-free graphs on surfaces VI.  $3$-colorability of quadrangulations}
\author{%
     Zden\v{e}k Dvo\v{r}\'ak\thanks{Computer Science Institute (CSI) of Charles University,
           Malostransk{\'e} n{\'a}m{\v e}st{\'\i} 25, 118 00 Prague, 
           Czech Republic. E-mail: {\tt rakdver@iuuk.mff.cuni.cz}.
	   Supported by project GA14-19503S (Graph
	    coloring and structure) of Czech Science Foundation and
	    by project LH12095 (New combinatorial algorithms - decompositions, parameterization, efficient solutions) of Czech Ministry of Education.}
 \and
     Daniel Kr{\'a}l'\thanks{Faculty of Informatics,
            Masaryk University, Botanick\'a 68A, 602 00 Brno, Czech Republic, and
            Mathematics Institute, DIMAP and Department of Computer Science, University
            of Warwick, Coventry CV4 7AL, UK. E-mail: {\tt dkral@fi.muni.cz}. The
            previous affiliation: Computer Science Institute (CSI) of Charles University.}
 \and
        Robin Thomas\thanks{School of Mathematics, 
        Georgia Institute of Technology, Atlanta, GA 30332. 
        E-mail: {\tt thomas@math.gatech.edu}.
        Partially supported by NSF Grants No.~DMS-0739366 and DMS-1202640.}
}
\date{August 2020}
\maketitle
\begin{abstract}
We give a linear-time algorithm to decide $3$-colorability (and find a $3$-coloring, if it exists)
of quadrangulations of a fixed surface.  The algorithm also allows to prescribe the coloring for a bounded number of vertices.
\end{abstract}

\section{Introduction}

This paper is a part of a series aimed at studying the $3$-colorability
of graphs (possibly with parallel edges, but no loops) on a fixed surface that are either triangle-free, or have their
triangles restricted in some way.  All colorings in the paper are proper, i.e., adjacent vertices
have different colors.  Embeddability in a surface is not a sufficient restriction by itself, as
$3$-colorability of planar graphs is NP-complete~\cite{garey1979computers}.  Restricting the triangles
is natural in the light of the well-known theorem of Gr\"otzsch stating that every planar triangle-free graph is $3$-colorable.

Quadrangulations of a surface represent an important special case of the problem, as evidenced by the following theorem of Gimbel and Thomassen~\cite{gimbel}.

\begin{theorem}\mylabel{thm:gimtho}
A triangle-free graph embedded in the projective plane is $3$-colorable if and only
if it has no subgraph isomorphic to a non-bipartite quadrangulation of the projective plane.
\end{theorem}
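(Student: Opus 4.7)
The forward implication (the ``if'' side of the iff, stated here via contrapositive) rests on the classical fact that a quadrangulation of the projective plane is $3$-colorable if and only if it is bipartite. I would prove this by observing that in any $3$-coloring of a quadrangulation $Q$, the vertices missing a fixed color $i$ induce a subgraph whose faces are precisely the $4$-faces of $Q$, each bounded by a $2$-colored $4$-cycle alternating between the other two colors; a short Euler-characteristic / parity argument on the projective plane then forces some color class to be empty, yielding a $2$-coloring. Granting this, a non-bipartite projective planar quadrangulation $Q$ satisfies $\chi(Q)\geq 4$, so any graph $G$ containing $Q$ as a subgraph satisfies $\chi(G)\geq\chi(Q)\geq 4$ and is not $3$-colorable.

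For the reverse direction, let $G$ be triangle-free, embedded in the projective plane, with $\chi(G)\geq 4$. By passing to an edge-minimal non-$3$-colorable subgraph, I may assume $G$ is $4$-critical; any non-bipartite quadrangulation subgraph of this reduced $G$ is also a subgraph of the original, so it suffices to work with the critical $G$. The plan is to show that in the embedding of $G$, every face has length exactly $4$. If this holds, then $G$ is itself a projective planar quadrangulation, and since $\chi(G)>2$ it is non-bipartite, so $G$ is the desired subgraph (of itself).

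Since $G$ is triangle-free, every face already has length at least $4$. So suppose for contradiction that some face $F$ has length $\ell\geq 5$; the goal is to derive a $3$-coloring of $G$. Using $4$-criticality, $G$ has minimum degree at least $3$, is $2$-connected, and in fact has no separation with a ``small'' small side whose removal admits a $3$-coloring extending across the cut. I would exploit this to locate two non-adjacent vertices $u,v$ on the boundary walk of $F$ that share no common neighbor in $G$; identifying $u$ and $v$ across $F$ produces a smaller triangle-free graph $G'$ embedded in the projective plane, which is $3$-colorable by minimality, and any such $3$-coloring lifts to $G$.

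The main obstacle is the case when no such pair $(u,v)$ exists — that is, every two non-adjacent vertices on $\partial F$ share a common neighbor. Handling this requires exploiting the topology of the embedding: depending on whether $\partial F$ is contractible in the projective plane, one either reduces to Gr\"otzsch's theorem on a planar piece bounded by $\partial F$, or uses a non-contractible cycle through $F$ to set up a $2$-coloring argument along one side of the cut. In effect, this case is a Gr\"otzsch-type statement for triangle-free projective planar graphs whose embeddings have girth at least $5$, and it is the technical heart of the theorem.
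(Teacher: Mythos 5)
The paper does not prove this theorem: it is stated as Theorem~\ref{thm:gimtho} and attributed directly to Gimbel and Thomassen~\cite{gimbel}, so there is no in-paper proof for your attempt to be compared against. Evaluating your sketch on its own merits: the high-level skeleton (pass to a $4$-critical subgraph, then show a $4$-critical triangle-free projective-planar graph must be a quadrangulation) does match the spirit of the Gimbel--Thomassen argument, and your forward direction correctly reduces to Youngs' theorem that a non-bipartite quadrangulation of the projective plane has chromatic number $4$ (though your Euler-characteristic sketch of that fact is too vague to check; the winding-number argument the paper develops around Observation~\ref{obs-gen} is a cleaner route).

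There are, however, two genuine gaps in the reverse direction. The identification step is not sound as stated: requiring $u,v$ to be nonadjacent with no common neighbor does not keep $G'$ triangle-free, since a path $u$--$x$--$y$--$v$ of length three in $G$ collapses to the triangle $wxy$ after identifying $u$ and $v$ to $w$; you need $u,v$ at distance at least four (or a different reduction), and you must also argue such a pair exists on $\partial F$. Moreover, $G'$ is a quotient, not a subgraph, of $G$, so the minimality of the $4$-critical $G$ says nothing about $G'$; the induction has to be set up over all triangle-free projective-planar graphs with no non-bipartite quadrangulation subgraph, and you must then check that the identification cannot create such a subgraph in $G'$ — this is not addressed. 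Second, and more importantly, you explicitly defer the case where every pair of nonadjacent vertices on $\partial F$ shares a common neighbor as ``the technical heart.'' That case is precisely where essentially all of the content of the Gimbel--Thomassen theorem lives, so as written the proposal outlines the easy reductions and leaves the substance of the theorem unproved.
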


In a previous paper of this series~\cite{trfree4}, we showed how Theorem~\ref{thm:gimtho}
can be generalized to other surfaces.  A graph $G$ is \emph{$4$-critical} if its chromatic number is $4$
and the chromatic number of every proper subgraph of $G$ is at most $3$.

\begin{theorem}[\cite{trfree4}, Theorem 1.3]\mylabel{thm:ctvrty}
There exists a constant $\kappa$ with the following property.  Let $G$ be a graph embedded in
a surface of Euler genus $g$.  Let $t$ be the number of triangles in $G$ and let $c$ be the number of
$4$-cycles in $G$ that do not bound a $2$-cell face.  If $G$ is $4$-critical, then
$$\sum_{\text{$f$ face of $G$}} (|f|-4)\le \kappa(g+t+c-1).$$
\end{theorem}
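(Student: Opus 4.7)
The natural first step is to translate the face-length sum into vertex/edge data. Using $\sum_f |f| = 2|E(G)|$ together with Euler's formula $|V(G)| - |E(G)| + |F(G)| = 2-g$, one obtains
\[
\sum_f (|f| - 4) \;=\; 4\bigl(|V(G)| + g - 2\bigr) \;-\; 2|E(G)|,
\]
or equivalently, assigning charge $\deg(v)-4$ to each vertex $v$ and $|f|-4$ to each face $f$, the total charge equals $4g-8$. Since $4$-criticality forces minimum degree at least $3$, only degree-$3$ vertices contribute negative vertex-charge; the theorem then reduces to a linear upper bound in $g+t+c$ on the total positive face charge, or equivalently on the excess of degree-$3$ vertices over those of degree at least $5$.

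I would proceed by induction on $|V(G)|+|E(G)|$, examining a minimum counterexample $G$. The core plan is to eliminate a list of reducible configurations. A contractible separating cycle $C$ of length at most $4$ permits splitting $G$ into two smaller pieces; induction applied to each piece, combined with a Thomassen-style $3$-colouring extension lemma for the short boundary $C$, would yield a $3$-colouring of $G$, a contradiction. Non-contractible short cycles either strictly decrease the Euler genus of the relevant component after cutting (charged to $g$), or are precisely the bad $4$-cycles enumerated by $c$; triangles are absorbed into the $t$-term. After all such reductions, $G$ may be assumed $4$-critical, of minimum degree $3$, and free of short cycles except for the $t$ triangles and the $c$ non-face-bounding $4$-cycles, each sitting in a controlled local position.

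The main step, and the principal obstacle, is the discharging argument showing that every face of length $\ell \ge 5$ can offload its excess $\ell-4$ units onto adjacent vertices of degree at least $4$, or onto the slack provided by the topology and the counted triangles and bad $4$-cycles. This requires a case analysis of the low-degree configurations that can appear on the boundary of such a face, each paired with a reducibility certificate obtained from a precolouring-extension lemma for short cycles. A finite base case, comprising graphs whose order is bounded in terms of $g+t+c$, is checked directly. The delicate bookkeeping lies not in any single reducible configuration, but in verifying that every reduction step strictly decreases $|V(G)|+|E(G)|$ while increasing $g+t+c$ by at most a bounded amount, so that a single constant $\kappa$ suffices uniformly across the induction.
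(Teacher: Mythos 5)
This theorem is cited from a previous paper in the series (\cite{trfree4}, Theorem~1.3); the present paper does not prove it, so there is no "paper's own proof" to compare against directly. That said, your sketch does not constitute a proof, and I want to flag where it falls short of what the actual argument in \cite{trfree4} (and its predecessors) must accomplish.

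Your charge normalization is fine as arithmetic: assigning $\deg(v)-4$ and $|f|-4$ gives total charge $4g-8$, and $4$-criticality does force minimum degree $3$. But from there the proposal contains almost no content beyond naming the general strategy. The two steps you rightly identify as "the principal obstacle" --- the discharging case analysis and the family of precolouring-extension lemmas that certify reducibility --- are precisely where the theorem lives, and you leave both entirely unaddressed. That is not a small gap; it \emph{is} the theorem.

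More seriously, the inductive frame you propose is conceptually flawed. If you cut $G$ along a contractible separating $(\le\!4)$-cycle $C$, the resulting pieces are not $4$-critical (indeed one of them is $3$-colourable, since $G$ minus any edge is), so you cannot apply the theorem to them inductively. What is actually needed, and what the series develops over several papers, is a \emph{relative} version of criticality --- the $B$-critical notion introduced in Section~5 of the present paper --- together with quantitative bounds (like Theorem~\ref{thm-ctvrtygen}) on $B$-critical graphs with precolored boundary. The whole inductive machinery runs over precolored pieces, not over $4$-critical subgraphs, and the face weights are not $|f|-4$ but a carefully tuned function (the $w_\eta$ of Section~5) chosen so that cutting and re-gluing is additive up to a controllable error. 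Your sketch gives no indication of this, treats the reducibility certificates as a black box, and your claim of a "finite base case" bounded in terms of $g+t+c$ is incoherent since $g+t+c$ is unbounded. In short, the proposal records the slogan ("discharging plus reducible configurations") without any of the structure that makes the bound linear in $g+t+c$ achievable, and the one structural claim it does make (induction via cutting along short cycles and appealing to the theorem on the pieces) does not go through as stated.
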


This result forms a basis for an algorithm to decide the $3$-colorability of a triangle-free graph $G$ embedded in a surface of Euler genus $g$.
Let us for simplicity assume that all $4$-cycles in $G$ bound $2$-cell faces.  By Theorem~\ref{thm:ctvrty},
it suffices to enumerate all subgraphs $H$ of $G$ such that 
\begin{equation}\label{eq-shf}
\sum_{\text{$h$ face of $H$}} (|h|-4)\le \kappa (g-1)
\end{equation}
and test whether they are $3$-colorable. There are $O\bigl(|V(G)|^{5\kappa (g-1)}\bigr)$ subgraphs of $G$ satisfying (\ref{eq-shf})---enumerate
the boundaries of the faces of $H$ of length greater than $4$, delete the vertices and edges of $G$ drawn inside these faces, and test whether
all other faces have length $4$.
Hence, if we can test the $3$-colorability of such a graph $H$ in a polynomial time, we obtain a polynomial-time algorithm to test $3$-colorability of $G$.
Let us remark that the exponent of the polynomial bounding the complexity of this algorithm depends on $g$; in~\cite{trfree7}, we give
a more involved linear-time algorithm.

A graph $H$ satisfying (\ref{eq-shf}) contains at most $5\kappa (g-1)$ vertices incident with faces of length greater than $4$.  Therefore, we can
try all possible $3$-colorings of these vertices and test whether they extend to a $3$-coloring of $H$.  In this paper, we develop
a linear-time algorithm to perform this last step.

Let us now state the result more precisely, starting with a number of definitions.  A \emph{surface} is a compact connected $2$-manifold with a (possibly null) boundary.  Each component of the boundary
is homeomorphic to a circle, and we call it a \emph{cuff}.  For non-negative integers $a$, $b$ and $c$,
let $\Sigma(a,b,c)$ denote the surface obtained from the sphere by adding $a$ handles, $b$ crosscaps and
removing the interiors of $c$ pairwise disjoint closed discs.  The classification theorem of surfaces (see e.g.~\cite{gallier2013guide}) shows that
every surface is homeomorphic to $\Sigma(a,b,c)$ for some choice of $a$, $b$ and $c$.
The \emph{Euler genus} $g(\Sigma)$ of a surface $\Sigma$ homeomorphic to $\Sigma(a,b,c)$ is defined as $2a+b$.

Consider a graph $G$ embedded in the surface $\Sigma$; when useful, we identify $G$ with the topological
space consisting of the points corresponding to the vertices of $G$ and the simple curves corresponding
to the edges of $G$.  A \emph{face} $f$ of $G$ is a maximal connected subset of $\Sigma-G$.
By the \emph{length} $|f|$ of $f$, we mean the sum of the lengths of the boundary walks of $f$ (in particular, if an edge
appears twice in the boundary walks, it contributes $2$ to $|f|$).
A face $f$ is \emph{closed $2$-cell} if it is homeomorphic to an open disk and its boundary forms a cycle in $G$.
A graph $H$ is a \emph{quadrangulation} of a surface $\Sigma$ if all faces of $H$ are closed $2$-cell and have length $4$.
This condition implies the boundary of $\Sigma$ is formed by a set of pairwise vertex-disjoint cycles in $H$,
called the \emph{boundary cycles} of $H$.
A vertex of $G$ contained in the boundary of $\Sigma$ is called a \emph{boundary vertex}.

Finally, we are ready to formulate our main algorithmic result.

\begin{theorem}\label{thm-mainalg}
For every surface $\Sigma$ and integer $k$, there exists a linear-time algorithm with input
\begin{itemize}
\item $G$: a quadrangulation of $\Sigma$ with at most $k$ boundary vertices, and
\item $\psi$: a function from boundary vertices of $G$ to $\{1,2,3\}$,
\end{itemize}
which decides whether there exists a $3$-coloring $\varphi$ of $G$ such that $\varphi(v)=\psi(v)$ for every boundary vertex $v$ of $G$.
In the affirmative case, the algorithm also outputs such a coloring $\varphi$.
\end{theorem}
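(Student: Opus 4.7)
The plan is to iteratively reduce the input to a quadrangulation of constant size via local reductions, and then solve the small instance by brute force, achieving linear time overall.

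First, I would design a family of \emph{reducible configurations}: small local subgraphs of a quadrangulation with precolored boundary whose presence allows the production of an equivalent instance with strictly fewer vertices in constant time. The design is guided by the standard observation that in any $3$-coloring of a $4$-face $v_1v_2v_3v_4$, at least one pair of opposite vertices must receive the same color, so $3$-coloring a quadrangulation amounts to choosing a ``diagonal'' at each $4$-face consistently. Natural reductions then include contracting a forced diagonal at a degree-$2$ interior vertex, collapsing two adjacent $4$-faces that share two consecutive edges, and, more powerfully, summarizing the color-extension behavior of a region bounded by a short separating or non-contractible cycle $C$ by enumerating the constantly many colorings of $C$ and replacing the region by a smaller gadget realizing the same extension function.

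Second, using Theorem~\ref{thm:ctvrty} together with structural results on quadrangulations from the earlier papers of this series, I would prove that every irreducible instance---a quadrangulation $(G,\psi)$ of $\Sigma$ with at most $k$ precolored boundary vertices admitting no reducible configuration---satisfies $|V(G)|\le C(\Sigma,k)$ for some explicit constant depending only on $\Sigma$ and $k$. The argument would proceed by contradiction: assuming $G$ is large, one encodes the precoloring $\psi$ by attaching bounded-size gadgets to the boundary, producing an auxiliary graph for which $4$-criticality is equivalent to non-extendibility of $\psi$. Theorem~\ref{thm:ctvrty} then bounds the face excess of any minimal $4$-critical witness; irreducibility forces the entire graph $G$ to lie inside this bounded witness, a contradiction.

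The implementation maintains a worklist of currently reducible configurations; since each reduction affects only a constant-size neighborhood, the newly exposed configurations are enumerated locally in constant time, giving $O(|V(G)|)$ total time to shrink the instance to at most $C(\Sigma,k)$ vertices, which is then solved by brute force in $O(1)$ time. The principal obstacle is the second step. The challenge is to design the family of reducible configurations rich enough that irreducible instances are bounded in size, yet restrictive enough that each reduction preserves the problem's invariants and can be applied in constant time. Because local reductions typically destroy the property that all internal faces are $4$-cycles, the algorithm must really operate on a slightly more general class---quadrangulations augmented with a bounded number of precolored ``patches'' of bounded length---and the bound from Theorem~\ref{thm:ctvrty} must be pushed through in this generalized setting, which is where most of the technical work would lie.
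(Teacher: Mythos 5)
Your proposal takes a genuinely different route from the paper, and unfortunately it has a substantive gap that I don't think can be repaired without essentially recreating the paper's central tool.

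The paper's algorithm rests on Theorem~\ref{thm-main}, the heart of the series: for a boundary-linked quadrangulation with no short essential subgraph, a precoloring of the boundary extends if and only if it satisfies the winding-number constraint. The algorithmic content is then (i) a structural reduction (Lemma~\ref{lemma-qstruc}) to faces that are boundary-linked and free of small essential subgraphs, at which point \emph{deciding} extendability is $O(1)$ by Theorem~\ref{thm-main}; and (ii) for \emph{finding} a coloring, iterative contraction of diagonals drawn from a star forest (Lemma~\ref{lemma-shrink}, Theorem~\ref{thm-star}), carefully maintaining the boundary-linked and no-small-essential-subgraph invariants so that Theorem~\ref{thm-main} stays applicable, until an essential subgraph of bounded size appears and the surface can be cut. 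Nothing in your plan engages with the winding-number obstruction (Corollary~\ref{cor-win0}, Observation~\ref{obs-gen}), and without some replacement for Theorem~\ref{thm-main} there is no way to certify extendability on a large instance in constant time.

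The specific step that fails is your size bound for irreducible instances. You propose to bound them via Theorem~\ref{thm:ctvrty} by attaching gadgets and extracting a $4$-critical (or $B$-critical) witness. But Theorem~\ref{thm:ctvrty} and its refinement (Theorem~\ref{thm-ctvrtygen}) bound the quantity $\sum_f(|f|-4)$, i.e., the \emph{face excess}, not the number of vertices. A quadrangulation has face excess zero by definition, so these theorems impose no size bound whatsoever on a quadrangulated witness; the conclusion ``irreducibility forces the entire graph $G$ to lie inside this bounded witness'' does not follow. Concretely, a large grid quadrangulation of the torus has no interior degree-$2$ vertex, no pair of $4$-faces sharing two consecutive edges, and (if the edge-width is large) no short separating cycle usable for your region-summarization reduction, so it is irreducible in your sense yet arbitrarily large. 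That it is nonetheless $3$-colorable, and that a $3$-coloring can be \emph{constructed} in linear time, is exactly what the winding-number characterization plus the diagonal-contraction machinery are designed to deliver; your local-reduction kernelization would need an analogue of Theorem~\ref{thm-main} hidden inside it to succeed.
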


The first step towards this algorithm was made by Hutchinson~\cite{locplanq}, who proved that for every orientable surface $\Sigma$, there
exists $c$ such that every graph embedded in $\Sigma$ with all faces of even length and without non-contractible cycles of length at most $c$ is $3$-colorable.
One could hope for a similar result in the setting of Theorem~\ref{thm-mainalg}; however, there turns out to be one more obstruction to the existence
of a $3$-coloring, based on \emph{winding number}.

Let $G$ be graph, let $\psi$ be a coloring of $G$ by colors $\{1,2,3\}$, and let $Q=v_1v_2\ldots v_kv_1$ be a directed closed walk in $G$.
One can view $\psi$ as mapping $Q$ to a closed walk in a triangle $T$, and the winding number of $\psi$ on $Q$ is then the number of times this walk goes
around $T$ in a fixed direction.  More precisely, for $uv\in E(G)$, let $\delta_\psi(u,v)=1$ if $\psi(v)-\psi(u)\in\{1,-2\}$, and
$\delta_\psi(u,v)=-1$ otherwise.  For a walk $W=u_1u_2\ldots u_m$, let $\delta_\psi(W)=\sum_{i=1}^{m-1} \delta_\psi(u_i,u_{i+1})$.
The winding number $\omega_\psi(Q)$ of $\psi$ on $Q$ is defined as $\delta_\psi(Q)/3$.

Suppose that $G$ is embeded in an orientable surface $\Sigma$ so that every face of $G$ is closed $2$-cell.  Let $\CC$ be the set consisting of all facial
and boundary cycles of $G$.  Since $\Sigma$ is orientable, we can orient the edges of each cycle of $\CC$
in the clockwise direction around the corresponding face or cuff.  Hence, every edge $e\in G$ is oriented in opposite directions in the two cycles of $\CC$
containing $e$.  For any $3$-coloring $\psi$ of $G$, consider the sum
of its winding numbers on the cycles in $\CC$ in this orientation.  Since each edge appears in opposite orientations
in two cycles of $\CC$, their contributions cancel out.

\begin{observation}\label{obs-win0}
Let $G$ be a graph embedded in an orientable surface $\Sigma$ so that every face of $G$ is closed $2$-cell.
Let $Q_1$, \ldots, $Q_m$ be the cycles of $\CC$ viewed as closed walks in the clockwise orientation.  If $\psi$ is a $3$-coloring of $G$, then
$$\sum_{i=1}^m \omega_\psi(Q_i)=0.$$
\end{observation}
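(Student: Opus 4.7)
The plan is to show that the sum $\sum_{i=1}^m \delta_\psi(Q_i)$ vanishes; dividing by $3$ then yields the desired identity. The key observation is that the function $\delta_\psi$ is antisymmetric on directed edges, i.e., $\delta_\psi(u,v) = -\delta_\psi(v,u)$ for every $uv \in E(G)$, and the hypothesis on the orientations of the cycles in $\CC$ is precisely designed so that the contributions of each edge pair up with opposite signs.

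First I would verify the antisymmetry of $\delta_\psi$. Since $\psi$ is a proper $3$-coloring (a requirement for the winding number to be a meaningful integer, but in any case the observation concerns only proper colorings), $\psi(v) - \psi(u) \neq 0$. The only possible nonzero values in $\{1,2,3\}$-differences are $\pm 1$ and $\pm 2$. By definition, $\delta_\psi(u,v) = 1$ precisely when $\psi(v) - \psi(u) \in \{1, -2\}$, i.e., when $\psi(v) - \psi(u) \equiv 1 \pmod 3$; otherwise $\psi(v) - \psi(u) \equiv -1 \pmod 3$ and $\delta_\psi(u,v) = -1$. Swapping $u$ and $v$ negates this residue class, so $\delta_\psi(v,u) = -\delta_\psi(u,v)$.

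Next I would rewrite the total sum as a sum over directed edges appearing in the cycles of $\CC$. Because every face is closed $2$-cell and every boundary component is covered by a boundary cycle, and by the orientation convention each edge $e \in E(G)$ appears in exactly two cycles of $\CC$ (either two facial cycles bounding the two faces incident with $e$, or one facial cycle and one boundary cycle if $e$ lies on the boundary) and is oriented oppositely in these two cycles, we obtain
\begin{equation*}
\sum_{i=1}^m \delta_\psi(Q_i) = \sum_{uv \in E(G)} \bigl(\delta_\psi(u,v) + \delta_\psi(v,u)\bigr) = 0,
\end{equation*}
using antisymmetry in the last step. Dividing by $3$ gives $\sum_{i=1}^m \omega_\psi(Q_i) = 0$.

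The only nontrivial step is ensuring that the orientation convention really does give each edge in opposite orientations in its two incident cycles of $\CC$; this is asserted in the setup preceding the observation and follows from orientability of $\Sigma$ together with the fact that cuffs are oriented clockwise around their cuff (hence consistently with the clockwise orientation of the adjacent face). No part of the argument is difficult once antisymmetry is in hand, so the proof is essentially a one-line cancellation; I do not expect any genuine obstacle.
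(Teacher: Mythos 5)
Your proof is correct and matches the paper's reasoning: the paper likewise observes that each edge appears in exactly two cycles of $\CC$ with opposite orientations, so the contributions cancel. You merely make explicit the antisymmetry $\delta_\psi(u,v)=-\delta_\psi(v,u)$, which the paper takes for granted.
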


As the winding number of any $3$-coloring on a $4$-cycle is $0$, we obtain the following constraint on $3$-colorings of quadrangulations.

\begin{corollary}\label{cor-win0}
Let $G$ be a quadrangulation of an orientable surface $\Sigma$.  Let $B_1$, \ldots, $B_k$ be the boundary cycles of $G$ in the clockwise orientation.  If $\psi$ is
a $3$-coloring of $G$, then
$$\sum_{i=1}^k \omega_\psi(B_i)=0.$$
\end{corollary}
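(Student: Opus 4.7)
The plan is to apply Observation~\ref{obs-win0} directly, and show that the only surviving contributions to the sum $\sum_{Q\in\CC}\omega_\psi(Q)=0$ come from the boundary cycles. Since $G$ is a quadrangulation, the set $\CC$ decomposes into the length-$4$ facial cycles of $G$ together with the boundary cycles $B_1,\ldots,B_k$. Hence it suffices to prove that the winding number of a proper $3$-coloring on any $4$-cycle vanishes.

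To verify this, let $C=v_1v_2v_3v_4v_1$ be a $4$-cycle and record two elementary facts about $\delta_\psi$. First, each increment $\delta_\psi(v_i,v_{i+1})$ lies in $\{-1,+1\}$, so $\delta_\psi(C)$ is a sum of four $\pm 1$'s and is therefore an even integer with $|\delta_\psi(C)|\le 4$. Second, inspecting the definition one sees that $\delta_\psi(u,v)\equiv \psi(v)-\psi(u)\pmod 3$ for every directed edge $uv$; summing this congruence around the closed walk $C$, the right-hand sides telescope to $0$, so $\delta_\psi(C)\equiv 0\pmod 3$. The only even multiple of $3$ in the interval $[-4,4]$ is $0$, so $\omega_\psi(C)=\delta_\psi(C)/3=0$.

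Substituting $\omega_\psi(C)=0$ for every facial $4$-cycle $C$ into the identity of Observation~\ref{obs-win0} leaves precisely $\sum_{i=1}^k\omega_\psi(B_i)=0$, which is the claim. I do not anticipate any real obstacle; the only bookkeeping point to mind is that the clockwise orientation of each $B_i$ used when assembling $\CC$ in Observation~\ref{obs-win0} coincides with the clockwise orientation appearing in the statement of the corollary, which is immediate from the construction.
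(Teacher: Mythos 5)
Your proposal is correct and follows exactly the paper's approach: invoke Observation~\ref{obs-win0} and note that the winding number of a proper $3$-coloring on any $4$-cycle is $0$, so the facial contributions vanish. The only difference is that the paper takes the vanishing on $4$-cycles as self-evident, whereas you spell out the short divisibility argument ($\delta_\psi(C)$ is an even integer of absolute value at most $4$ and congruent to $0$ modulo $3$, hence zero), which is a worthwhile clarification but not a different route.
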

Therefore, when generalizing Hutchinson's result~\cite{locplanq} to graphs with precolored boundary cycles, for orientable surfaces we need to require that the
sum of the winding numbers of boundary cycles in their prescribed coloring is~$0$.

For non-orientable surfaces, the situation is a bit more complicated, as we cannot consistently orient all cycles of $\CC$.  Suppose that $G$ is a quadrangulation of
a non-orientable surface, and let us fix directed cycles $B_1$, \ldots, $B_k$
tracing the cuffs of~$G$.  For each facial cycle, choose an orientation arbitrarily.
Let $D$ denote the directed graph with vertex set $V(G)$ and with $(u,v)$ being an edge of $D$ if and only if $uv$ is an edge of $G$ oriented
towards $v$ in both cycles of $\CC$ that contain it.  Let $p(G,B_1,\ldots, B_k)=2|E(D)|\bmod 4$.
Note that $p(G,B_1,\ldots, B_k)$ is independent on the choice of the orientations of the $4$-faces, since reversing the orientation
of a $4$-face with $d$ edges belonging to $D$ changes $2|E(D)|$ by $2(4-2d)\equiv 0\pmod{4}$.

Consider the sum of winding numbers of a $3$-coloring $\psi$ of $G$ on cycles in~$\CC$.
As before, the contributions of all other edges of $G$ that do not belong to $D$ cancel out, and
since $G$ is a quadrangulation, the winding number on any non-boundary cycle in $\CC$ is $0$.
Hence,
$$\sum_{i=1}^k \delta_\psi(B_i)=2\sum_{(u,v)\in E(D)} \delta_\psi(u,v).$$
Since $\delta_\psi(u,v)=\pm 1$ for every $uv\in E(D)$,
$$2\sum_{(u,v)\in E(D)} \delta_\psi(u,v)\equiv 2|E(D)|\equiv -2|E(D)|\equiv -p(G,B_1,\ldots, B_k)\pmod{4},$$
regardless of the $3$-coloring $\psi$.  Furthermore,
$$\sum_{i=1}^k \delta_\psi(B_i)=3\sum_{i=1}^k \omega_\psi(B_i)\equiv -\sum_{i=1}^k \omega_\psi(B_i)\pmod{4}.$$
Therefore, we get the following necessary condition for the existence of a $3$-coloring.

\begin{observation}\label{obs-gen}
Let $G$ be a quadrangulation of a non-orientable surface $\Sigma$.  Let $B_1$, \ldots, $B_k$ be directed boundary cycles of $G$.  If $\psi$ is
a $3$-coloring of $G$, then
$$\Bigl(\sum_{i=1}^k \omega_\psi(B_i)\Bigr)\bmod 4=p(G,B_1,\ldots, B_k).$$
\end{observation}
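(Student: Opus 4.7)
The plan is to formalize the computation that already appears in the paragraph preceding the statement; the argument mirrors Observation~\ref{obs-win0}, but with two twists, one needed to handle non-orientability (there is no globally consistent choice of facial orientations, so some edge contributions fail to cancel), and one used to extract a mod-$4$ statement from the $\pm 1$-valued increments $\delta_\psi$.

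First I would fix the orientation data: the cuff orientations $B_1,\ldots,B_k$ are given, and each facial cycle is oriented arbitrarily, giving the family $\CC$ and the auxiliary digraph $D$. For any edge $e=uv$ of $G$, $e$ appears in exactly two cycles $C_1,C_2\in\CC$. If $e$ is oriented oppositely in $C_1$ and $C_2$, then the corresponding contributions to $\sum_{C\in\CC}\delta_\psi(C)$ are $\delta_\psi(u,v)+\delta_\psi(v,u)=0$; if $e$ is oriented the same way in both (with the common head $v$, say), then $uv\in E(D)$ and the contribution is $2\delta_\psi(u,v)$. Using that every non-boundary cycle in $\CC$ is a facial $4$-cycle and therefore has winding number $0$ (so $\delta_\psi=3\omega_\psi=0$ on it), I isolate
$$\sum_{i=1}^{k}\delta_\psi(B_i)=2\sum_{uv\in E(D)}\delta_\psi(u,v).$$

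The key step is then the modulo-$4$ reduction. Since $\delta_\psi(u,v)\in\{+1,-1\}$, we have $2\delta_\psi(u,v)\equiv 2\pmod 4$ for every edge of $D$, hence $2\sum_{uv\in E(D)}\delta_\psi(u,v)\equiv 2|E(D)|\pmod 4$, and $2|E(D)|\equiv -2|E(D)|\equiv -p(G,B_1,\ldots,B_k)\pmod 4$ by the definition of $p$. Combined with $\sum_{i=1}^{k}\delta_\psi(B_i)=3\sum_{i=1}^{k}\omega_\psi(B_i)\equiv -\sum_{i=1}^{k}\omega_\psi(B_i)\pmod 4$, this yields the claimed congruence after multiplying by $-1$.

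The only subtlety, which is really the main obstacle to the statement making sense at all, is confirming that $p(G,B_1,\ldots,B_k)$ is independent of the arbitrary orientations chosen for the $4$-faces: reversing the orientation of a single facial cycle whose four edges meet $E(D)$ in $d$ places turns that $d$ into $4-d$, so $2|E(D)|$ changes by $2(4-2d)\equiv 0\pmod 4$. This is already noted in the preceding paragraph, so the proof of the observation itself amounts to assembling the chain of congruences above; I do not expect any genuinely new difficulty beyond careful bookkeeping of signs.
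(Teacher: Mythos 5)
Your proposal is correct and follows the paper's own argument essentially step for step: the same decomposition of $\sum_{C\in\CC}\delta_\psi(C)$ into cancelling pairs plus the $2\delta_\psi(u,v)$ contributions from $E(D)$, the same use of $\delta_\psi=0$ on facial $4$-cycles, the same mod-$4$ reduction via $2\delta_\psi(u,v)\equiv 2\pmod 4$, and the same conversion $\delta_\psi=3\omega_\psi\equiv-\omega_\psi\pmod 4$. Your aside confirming that $p$ is well-defined under reversal of a facial orientation is likewise the paper's own remark, so nothing new is needed.
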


If a $3$-coloring $\psi$ of the boundary cycles satisfies the condition of Observation~\ref{obs-gen}, we say that $\psi$ is
\emph{parity-compliant}.
The analogue of the result of Hutchinson~\cite{locplanq} for non-orientable surfaces without boundary was obtained by Mohar and Seymour~\cite{MohSey} and Nakamoto, Negami and Ota~\cite{NakNegOta}:
For every non-orientable surface $\Sigma$ without boundary, there exists $c$ such that a quadrangulation $G$ of $\Sigma$ without non-contractible cycles of length at most $c$ is $3$-colorable if and
only if $p(G)=0$.  Let us remark that the condition $p(G)=0$ is equivalent to stating that \emph{orienting cycles} in $G$ (cycles $K$ such that cutting $\Sigma$ along $K$ results in an orientable surface)
have even lengths.

The core of our algorithm is based on the fact that the winding-number conditions are not only necessary, but also sufficient for a precoloring of boundary cycles to extend, unless
the graph contains a small subgraph $H$ such that cutting along $H$ simplifies the surface.  Let $G$ be $2$-cell embedded in a surface $\Sigma$.  A subgraph $H$ of $G$ is \emph{non-essential}
if there exists $\Lambda\subset \Sigma$ containing $H$, where $\Lambda$ is either an open disk, or an open disk with a hole whose boundary is equal to a cuff $C$ of $\Sigma$.
A subgraph $H$ of $G$ is \emph{essential} if it is not non-essential.
A cycle $K$ is \emph{contractible} if there exists a closed disk $\Delta\subseteq \Sigma$ with boundary equal to $K$.  For a cuff $C$, let $\Sigma+\hat{C}$ denote the
surface obtained from $\Sigma$ by adding an open disk disjoint from $\Sigma$ and with boundary equal to $C$.  A cycle $K$ \emph{surrounds a cuff $C$} if
$K$ is not contractible in $\Sigma$, but it is contractible in $\Sigma+\hat{C}$.
An embedding of $G$ is \emph{boundary-linked} if
\begin{itemize}
\item $\Sigma$ is a disk and $G$ contains no path $P$ with both ends $u$ and $v$ in the boundary cycle $B$, such that both paths between $u$ and $v$ in $B$
are strictly longer than $P$; or,
\item $\Sigma$ is a cylinder with boundary cycles $B_1$ and $B_2$ and the length of every non-contractible cycle in $G$ distinct from $B_1$ and $B_2$
is \emph{strictly greater} than $\max(|B_1|,|B_2|)$; or,
\item $\Sigma$ is neither a disk nor a cylinder and for every boundary cycle $B$ of $G$, every cycle $K$ of $G$ surrounding the cuff incident with $B$ has length \emph{at least} $|B|$.
\end{itemize}
For an integer $\nu\ge 0$, we say the embedding of $G$ is \emph{$\nu$-generic} if it is boundary-linked and
every connected essential subgraph of $G$ has at least $\nu$ edges.

We say that a coloring $\psi$ of the boundary cycles of a quadrangulation of a surface $\Sigma$ \emph{satisfies the winding number constraint}
if either
\begin{itemize}
\item $\Sigma$ is orientable and the sum of winding numbers of $\psi$ on the boundary cycles of $G$ in their clockwise orientation is $0$, or
\item $\Sigma$ is non-orientable and $\psi$ is parity-compliant.
\end{itemize}

\begin{theorem}\label{thm-main}
For every surface $\Sigma$ and every integer $k$, there exists an integer $\nu$ such that the following claim holds.
For every $\nu$-generic quadrangulation $G$ of $\Sigma$ such that each boundary cycle of $G$ has length at most $k$,
a $3$-coloring $\psi$ of the boundary cycles of $G$ extends to a $3$-coloring of $G$ if and only if $\psi$ satisfies the winding number constraint.
\end{theorem}

In order to apply Theorem~\ref{thm-main}, the following characterization of connected essential subgraphs by Robertson and Seymour~\cite{rs7} is useful.
A {\em dumbbell} is a graph consisting either of two cycles with exactly one vertex in common, or two disjoint cycles and a path joining the two cycles and disjoint from them except for its ends.
A {\em theta graph} is a graph consisting of three internally disjoint paths joining the same pair of distinct vertices.
A {\em lollipop} is a graph consisting of a cycle and a path (the path may be just a single vertex) with one end on the cycle and otherwise disjoint from the cycle. The other end of the
path will be called the {\em tip} of the lollipop.
A path $P$ is a \emph{spoke} if it intersects the boundary exactly in its endpoints
and both of them belong to the same boundary cycle~$B$.  A \emph{base} of the spoke is a subpath $Q$ of $B$ with the same endpoints as $P$ such that $Q$ is homotopic to $P$.
In a disk, every spoke has two bases, while in any other surface, it has at most one base.

\begin{lemma}\label{lemma-schism}
Let $G$ be a quadrangulation of a surface $\Sigma$ and let $H$ be an inclusion-wise minimal connected essential subgraph of $G$.
Then $H$ satisfies one of the following conditions:
\begin{enumerate}[(i)]
\item $H$ is a path joining boundary vertices of distinct cuffs and containing no other boundary vertices, or
\item $H$ is a cycle containing at most one boundary vertex and $H$ is neither contractible nor surrounds a cuff, or
\item $H$ is a spoke with no base, or
\item $\Sigma$ is not the cylinder, $H$ is a dumbbell or a theta graph containing no boundary vertices, and the cycles of $H$ surround pairwise distinct cuffs of $\Sigma$, or
\item $\Sigma$ is not the cylinder, $H$ is a lollipop with the tip in a cuff $C$ and with no other boundary vertex, and the cycle of $H$ surrounds a cuff distinct from~$C$.
\end{enumerate}
If $\Sigma$ is a cylinder, then only (i) is possible.
\end{lemma}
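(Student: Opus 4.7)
The plan is to do a structural case analysis on $H$, exploiting minimality at every step. The starting observation is that every leaf of $H$ must be a boundary vertex: if an interior vertex $v$ were a leaf, then $H-v$ would be a connected proper subgraph, hence non-essential by minimality and contained in some open disk or annular region $\Lambda\subseteq\Sigma$; since $v$ is interior, $\Lambda$ can be enlarged along the leaf edge to include $v$, contradicting essentiality of $H$. After suppressing interior degree-$2$ vertices one obtains a ``topological skeleton'' in which every interior vertex has degree at least $3$ and every leaf lies on the boundary.

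If the skeleton has no branch vertex then $H$ is a path or a cycle. In the path case both endpoints are boundary vertices; a splitting argument---cutting the path at any interior boundary vertex produces two subpaths at least one of which remains essential, violating minimality---shows no other boundary vertex lies on $H$. Depending on whether the two endpoints are on distinct or equal cuffs, we obtain case~(i) or case~(iii); for~(iii) the absence of a base is forced because a spoke together with a base would bound a disk containing $H$, contradicting essentiality. In the cycle case the same splitting argument forces at most one boundary vertex on $H$, and essentiality directly rules out contractible cycles and cycles that surround a cuff, yielding case~(ii).

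If the skeleton does contain a branch vertex then $H$ contains a cycle, and one argues that the skeleton must be a theta graph, a dumbbell, or a lollipop (any additional branching or extra path would allow the removal of a whole path while preserving essentiality). Branch vertices must be interior, again by splitting. For thetas and dumbbells, every constituent cycle must be non-contractible (a contractible cycle would bound a disk that, together with essentiality, collapses $H$), and two cycles surrounding the same cuff would be mutually homotopic, so that one of the cycles together with the connecting path (or shared vertex) would form a smaller essential subgraph; hence the cycles surround pairwise distinct cuffs, giving case~(iv). For a lollipop, the unique leaf is a boundary vertex by Step~1, and the cycle must surround a cuff distinct from the one containing the tip, lest the whole $H$ lie in an annular region around a single cuff, giving case~(v). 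Finally, in a cylinder every non-contractible cycle surrounds one of the two cuffs, which is incompatible with all of (ii)--(v), so only (i) remains.

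The main technical obstacle is the repeated use of the splitting argument: each time $H$ decomposes at a vertex into two connected pieces $A$ and $B$, one must argue that essentiality of $H$ forces essentiality of $A$ or $B$, equivalently that disks or annuli containing $A$ and $B$ can be glued across the shared vertex into a region containing $H$. The bookkeeping is most delicate for theta/dumbbell configurations, where $A$ and $B$ meet at an interior branch vertex and the gluing must respect the local rotation at that vertex without introducing unintended topology; much of the effort will be spent confirming that in each such configuration no intermediate essential subgraph is missed.
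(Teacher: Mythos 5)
The paper does not actually prove Lemma~\ref{lemma-schism}; it is stated as a characterization due to Robertson and Seymour~\cite{rs7}, so there is no internal proof to compare against. Evaluated on its own terms, your outline is sensible (leaves are boundary vertices, reduce to a skeleton, case analysis by skeleton type), but two of the pivotal steps are not correct as stated.

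First, you defer the crucial ``splitting'' step (\emph{if $H$ decomposes at a vertex into non-essential $A$ and $B$, then $H$ is non-essential}) to an unspecified gluing of disks/annuli, and acknowledge this is the ``main technical obstacle.'' This is a genuine gap, not mere bookkeeping. Two half-open annular neighbourhoods of the same cuff $C$ glued at a shared boundary vertex do not in an obvious way union into an annular neighbourhood; one cannot simply ``glue across the shared vertex.'' The step does hold, but the natural proof is different in spirit: since $A$ and $B$ are each contained in a disk or in a disk-with-hole at $C$, the image of $\pi_1(A\cup B)$ in $\pi_1(\Sigma+\hat C)$ is trivial, from which one deduces that $A\cup B$ lies in a disk of $\Sigma+\hat C$ and hence in a disk-with-hole at $C$ in $\Sigma$. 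Without some argument of this kind the path/cycle cases are not actually settled (my earlier worry about a spoke whose two halves each have a base but whose concatenation winds around $C$ is exactly what this argument resolves).

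Second, your justification of case~(iv) is wrong. You claim that if two cycles $K_1,K_2$ of a dumbbell or theta surround the same cuff $C$, then ``one of the cycles together with the connecting path would form a smaller essential subgraph.'' That subgraph is a lollipop whose tip is an \emph{interior} vertex (it lies on the other cycle), and by your own leaf-stripping argument such a lollipop is non-essential; it does not contradict minimality. The correct reason the configuration is impossible is that $H$ \emph{itself} is then non-essential: with both $[K_1]$ and $[K_2]$ dying after patching $C$, the image of $\pi_1(H)$ in $\pi_1(\Sigma+\hat C)$ is trivial, so $H$ sits in a disk-with-hole at $C$, contradicting the hypothesis that $H$ is essential. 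A similar remark applies to the cylinder sentence: for outcomes (iv) and (v) the right reason they cannot occur is that $H$ would be contained in an annular neighbourhood of a cuff, not that ``every non-contractible cycle surrounds a cuff.'' Finally, you only argue that \emph{branch} vertices are interior; ruling out boundary vertices on the non-branch parts of the cycles in cases (iv) and (v) and ruling out skeletons other than theta/dumbbell/lollipop are asserted, not shown.
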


Observe that cutting along a connected essential subgraph simplifies the surface, decreasing its genus or the number of cuffs.  In combination
with the results of~\cite{trfree4}, Theorem~\ref{thm-main} implies an important structural observation.
For a quadrangulation $G$ of a surface $\Sigma$, a subgraph $H$ of $G$ and a face $h$ of $H$, let $\Sigma_h$ denote a surface whose
interior is homeomorphic to $h$, let $\theta_h:\Sigma_h\to \Sigma$ be a continuous function whose restriction to the interior of $\Sigma_h$
is a homeomorphism to $h$, and let $G_h=\theta_h^{-1}(G)$.

\begin{theorem}\label{thm-struct}
For every surface $\Sigma$ and an integer $k\ge 0$, there exists a constant $\beta$ with the following property.  Let $G$ be a triangle-free graph embedded in $\Sigma$
so that every cuff of $\Sigma$ traces a cycle in $G$ and so that the sum of the lengths of the boundary
cycles of $G$ is at most $k$.  Suppose that every contractible $4$-cycle in $G$ bounds a face.  Then $G$ has a subgraph $H$ with at most $\beta$
vertices, such that $H$ contains all the boundary cycles and each face $h$ of $H$ satisfies one of the following.
\begin{enumerate}[(a)]
\item Every precoloring of the boundary of $h$ extends to a $3$-coloring of $G_h$, or
\item $G_h$ is a quadrangulation and every precoloring of the boundary of $H$ which satisfies the winding number constraint
extends to a $3$-coloring of $G_h$, or
\item $h$ is an open cylinder and $G_h$ is its quadrangulation, or
\item $h$ is an open cylinder and both boundary cycles of $h$ have length exactly $4$.
\end{enumerate}
\end{theorem}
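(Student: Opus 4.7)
The plan is to build $H$ iteratively, starting with $H_0$ equal to the union of the boundary cycles of $G$ (so $|V(H_0)|\le k$), and enriching it in two phases so that each face of $H$ satisfies one of (a)--(d). Throughout, $H$ is a subgraph of $G$ containing all boundary cycles, and at each step I add only a subgraph of size bounded in $\Sigma$ and $k$.

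In the first phase, for each face $h$ of the current $H$ such that $G_h$ is not a quadrangulation of $\Sigma_h$, I invoke the results of~\cite{trfree4}, in particular the precoloring-extension form of Theorem~\ref{thm:ctvrty}, to find a bounded-size subgraph $H_h\subseteq G_h$ such that either every precoloring of the boundary of $h$ extends to a $3$-coloring of $G_h$---in which case $h$ already satisfies~(a) and nothing is added---or, after adding $H_h$ to $H$, each subface $h'$ of the new $H$ lying inside $h$ has $G_{h'}$ a quadrangulation of $\Sigma_{h'}$. The hypotheses that $G$ is triangle-free and that every contractible $4$-cycle of $G$ bounds a face place us exactly in the setting in which Theorem~\ref{thm:ctvrty} gives the required bound on $|V(H_h)|$. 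After this phase, every face $h$ of $H$ either satisfies (a) or $G_h$ is a quadrangulation of $\Sigma_h$.

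In the second phase, for each remaining face $h$, let $\nu$ be the constant from Theorem~\ref{thm-main} for the surface $\Sigma_h$ and the length $|h|$. If $G_h$ is boundary-linked and every connected essential subgraph of $G_h$ has at least $\nu$ edges, then $h$ satisfies~(b); otherwise, I locate a small witness to the failure---either a short connected essential subgraph, which by Lemma~\ref{lemma-schism} is a path, cycle, spoke, dumbbell, theta graph, or lollipop of size at most a function of $\nu$, or a short path or cycle violating one of the three conditions of being boundary-linked---and add it to $H$. The process is then repeated inside the newly created subfaces.

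The main obstacle is bounding the number of rounds of the second phase. Each essential subgraph added, by Lemma~\ref{lemma-schism}, cuts the ambient piece so as to strictly simplify the pair (Euler genus, number of cuffs) of $\Sigma_h$; in the cylinder case one either succeeds in satisfying (b) or produces residual pieces that are narrow cylinder quadrangulations for which no further essential cutting is possible without shortening the boundary, and these are exactly the configurations described in (c) and (d). Since the pair (Euler genus, number of cuffs) is well-founded and bounded by the data of $\Sigma$, the recursion halts after a number of rounds depending only on $\Sigma$. Combined with the bounded additions per round, this yields $|V(H)|\le\beta$ for a suitable $\beta=\beta(\Sigma,k)$, and every face of the final $H$ satisfies one of (a)--(d).
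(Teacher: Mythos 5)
Your two-phase decomposition is in essence the paper's own strategy (first use the density bound for critical graphs to leave only quadrangulation faces, then decompose those quadrangulation faces via cutting along small essential subgraphs or boundary-link violations), but there are two genuine gaps that a careful write-up would have to close.

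First, the precoloring-extension density theorem that the paper actually uses (Theorem~\ref{thm-ctvrtygen}) requires the embedded graph to have \emph{no non-contractible $4$-cycles}, which the hypotheses of Theorem~\ref{thm-struct} do not guarantee. You cannot simply ``invoke the precoloring-extension form of Theorem~\ref{thm:ctvrty}'' inside a face as if the hypotheses matched. The paper has to work for this: it first cuts along any essential subgraph with at most $8$ edges (so afterwards non-contractible $4$-cycles can only surround $4$-boundary cuffs), then excises maximal $4$-ringed annuli around those cuffs, and then \emph{splits vertices} on the remaining $4$-cycles surrounding cuffs so that the modified graph $G''$ literally has no non-contractible $4$-cycles before Lemma~\ref{lemma-almstruct} is applied. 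Without something like this, your phase-one step has no valid theorem to appeal to. Relatedly, this is also where outcome~(d) comes from (the excised $4$--$4$ cylinders); your sketch derives (c)/(d) only from the phase-two recursion, which does not account for these pieces.

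Second, your termination argument for phase two is not well-founded as stated. Cutting along a short cycle $K$ surrounding a cuff with $|K|<|B|$ does \emph{not} simplify the pair (Euler genus, number of cuffs): one of the resulting pieces is homeomorphic to the original surface, and the other is a cylinder. Likewise in the disk, cutting along a spoke whose bases are longer yields two disks. In both situations the complexity pair is unchanged, and what actually decreases is the multiset of boundary-cycle lengths (in the paper's formulation of Lemma~\ref{lemma-qstruc}: ``$b(G)$ strictly dominates $b(G_f)$''), which in turn is bounded in terms of $k$. Since you already write $\beta=\beta(\Sigma,k)$, you implicitly need $k$ in the induction; you should say explicitly that the recursion is on the lexicographic pair (complexity of $\Sigma_h$, total boundary length) and explain why each round strictly decreases one of them without increasing the earlier one. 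Without that, the claim that ``the recursion halts after a number of rounds depending only on $\Sigma$'' is simply false.

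A smaller point: after phase one, the subfaces are \emph{not} automatically quadrangulations just because you added the critical subgraph. What the paper does is pass to $H_2$ consisting of the boundary cycles plus the boundary walks of the long faces of $H_1$; then each face of $H_2$ is either a face of $H_1$ (so it satisfies~(a)) or is tiled by $4$-faces of $H_1$, and one still has to use ``every contractible $4$-cycle of $G$ bounds a face'' plus the absence of non-contractible $4$-cycles to conclude $G''_h$ is a quadrangulation. Your phrasing collapses this into ``each subface has $G_{h'}$ a quadrangulation,'' which skips the step where you justify it.
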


Let us remark that if $\Sigma$ is the disk, the outcome (d) of the theorem cannot occur, since otherwise $G$ would contain
a non-facial contractible $4$-cycle.
As a corollary of Theorem~\ref{thm-struct}, we obtain a strengthening of the results of Hutchinson~\cite{locplanq}, Mohar and Seymour~\cite{MohSey} and Nakamoto, Negami and Ota~\cite{NakNegOta}.
Recall that the \emph{edge-width} of an embedding of a graph $G$ in a surface is the length of a shortest non-contractible cycle in $G$.

\begin{corollary}\label{cor-ew}
For every surface $\Sigma$ without boundary, there exists a constant $\gamma$ with the following property.
Let $G$ be a triangle-free graph embedded in $\Sigma$.  If $\Sigma$ is non-orientable, assume furthermore that
no subgraph $H$ of $G$ is a quadrangulation with $p(H)\neq 0$.  If the edge-width of the embedding of $G$
is at least $\gamma$, then $G$ is $3$-colorable.
\end{corollary}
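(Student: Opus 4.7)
The strategy is to deduce the corollary from the structural Theorem~\ref{thm-struct} together with the precoloring-extension result Theorem~\ref{thm-main}. First I would reduce to the case in which every contractible $4$-cycle of $G$ bounds a face: if $C$ is a non-facial contractible $4$-cycle, it bounds a closed disk $\Delta\subset\Sigma$ containing a triangle-free plane subgraph whose outer face is $C$, and by the standard precoloring-extension consequence of Gr\"otzsch's theorem every $3$-coloring of $C$ extends to this interior; hence we may delete the interior of $\Delta$ (keeping only the cycle $C$) without affecting $3$-colorability or edge-width. Applying Theorem~\ref{thm-struct} with $k=0$ to the resulting graph $G'$ yields a subgraph $H\subseteq G'$ with $|V(H)|\le\beta$ such that every face of $H$ is of type (a)--(d).

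I would choose $\gamma$ large enough that every graph embedded in $\Sigma$ with at most $\beta$ vertices has all cycles of length below $\gamma$ (such a bound is provided by Euler's formula applied to $\Sigma$). Since the edge-width of $G'$ is at least $\gamma$, every cycle of $H$ is then contractible in $\Sigma$. Consequently, every face of $H$ except a single distinguished ``big'' face (which carries the genus of $\Sigma$) is an open disk, and the cylinder faces in alternatives (c) or (d), if they occur, are cylinders sitting inside a disk of $\Sigma$.

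It then suffices to find a $3$-coloring $\psi$ of $H$ such that for every face $h$ of $H$ the restriction $\psi|_{\partial h}$ extends to a $3$-coloring of $G_h$; pasting these extensions produces a $3$-coloring of $G'$, and hence of $G$. Faces of type (a) impose no condition, and for types (b), (c), (d) Theorem~\ref{thm-main} reduces the extension question to the winding-number constraint (orientable $\Sigma_h$) or parity-compliance condition (non-orientable $\Sigma_h$) on $\partial h$. Faces of type (d) are automatic, since a $3$-coloring has winding number $0$ on any $4$-cycle, and for a disk face of type (b) the condition $\omega_\psi(\partial h)=0$ holds because the boundary coloring extends into the disk by Gr\"otzsch applied to $G_h$. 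Thus the only remaining condition is the one imposed by the big face of~$H$.

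In the orientable case, Observation~\ref{obs-win0} applied to~$H$ itself forces the sum of winding numbers over all face boundaries of $H$ to vanish, so once the disk and cylinder faces are handled the constraint on the big face follows automatically, and any $3$-coloring of~$H$ (which exists by Gr\"otzsch, as $H$ is essentially planar) will do. In the non-orientable case, I would construct a quadrangulation subgraph $H^{\ast}\subseteq G$ whose $p$-value captures the parity-compliance constraint on the big face, via an additivity argument for $p$ across the faces of $H$; the hypothesis $p(H^{\ast})=0$ then yields the required parity-compliance. I expect the main technical obstacle to lie precisely in this last step for non-orientable $\Sigma$: producing an honest quadrangulation subgraph of $G$ that fills out each disk face of $H$ and carefully relating its $p$-value to the parity-compliance constraint on the big face through the cut along $H$.
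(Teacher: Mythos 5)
Your high-level strategy (apply Theorem~\ref{thm-struct}, choose $\gamma>\beta$ so all cycles of $H$ are contractible, concentrate on the one ``big'' face carrying the genus, and reduce the extension problem there to a winding-number constraint) matches the paper's. But the execution has a genuine gap, and the paper's route around it uses a decomposition and a key lemma you are missing.

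The central problem is your claim that for a disk face $h$ of $H$ of type~(b), ``the condition $\omega_\psi(\partial h)=0$ holds because the boundary coloring extends into the disk by Gr\"otzsch applied to $G_h$.'' This reverses the logic. Gr\"otzsch gives \emph{some} $3$-coloring of $G_h$, whose restriction to $\partial h$ indeed has winding number $0$; it does not say that the restriction to $\partial h$ of a $3$-coloring $\psi$ of $H$ that you chose independently has winding number $0$. In general a $3$-coloring of $H$ can wind nontrivially around a $6$-cycle or longer, and then it simply will not extend into a disk-quadrangulation $G_h$. So ``find any $3$-coloring of $H$, then extend into each face'' does not go through, and the appeal to Observation~\ref{obs-win0} to settle the big face's constraint collapses, because it relies on the disk faces already having zero winding number. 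You also acknowledge the non-orientable parity-compliance step is unresolved, and the proposed ``additivity of $p$ across faces of $H$'' is not an argument.

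The paper avoids these issues by not coloring $H$ directly at all. After establishing that all cycles of $H$ are contractible, it splits $G$ into $G_0$ (the part in the closure of the one non-planar face $h$) and $G_1,\dots,G_m$ (the parts in the components of $\Sigma\setminus h$), noting that each $G_i$ for $i\ge 1$ is a whole triangle-free \emph{plane} graph whose outer walk $B_i$ is a facial walk of $h$. The key tool is Lemma~\ref{lemma-colfloat}: any connected triangle-free plane graph has a $3$-coloring with $|\omega|\le 1$ on its outer walk, and if it is not a quadrangulation of its outer walk, it also has one with $1\le|\omega|\le 2$. Coloring each $G_i$ this way and permuting colors gives enough freedom to hit the winding-number constraint on $h$ in the orientable case (using that an even number of the $B_i$ have odd length), and in the non-orientable case to adjust the total by $\pm2$ using the second alternative of Lemma~\ref{lemma-colfloat} when the coloring is not parity-compliant; the hypothesis on $p$-values rules out the remaining bad case where $G$ itself is a quadrangulation. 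Without some analogue of Lemma~\ref{lemma-colfloat} providing this winding-number flexibility, your plan cannot be completed, either in the orientable or the non-orientable case.
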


In the previous paper of the series~\cite[Lemma~4.6]{trfree5}, we proved Theorem~\ref{thm-main} in the special case that $\Sigma$ is a cylinder.
In Section~\ref{sec-spec}, we prove a strengthening of Theorem~\ref{thm-main} in the special case that $\Sigma$ is a disk.
In Section~\ref{sec-main}, we prove the general case of Theorem~\ref{thm-main}, and
in Section~\ref{sec-alg}, we use it to derive the algorithm of Theorem~\ref{thm-mainalg}.
Finally, in Section~\ref{sec-struct}, we prove Theorem~\ref{thm-struct} and show that it implies Corollary~\ref{cor-ew}.

\section{The disk}\label{sec-spec}

While in general Theorem~\ref{thm-main} only gives a sufficient condition for the existence of a $3$-coloring,
in the special case of a quadrangulation of the disk, we can give an exact characterization.

\begin{lemma}\label{lem-disk}
Let $G$ be a quadrangulation of a disk $\Sigma$ with the boundary cycle $B$.  Let $\psi$ be a $3$-coloring of $B$ with winding number $0$.  Then $\psi$ extends to a $3$-coloring of $G$ if and only if
\begin{itemize}
\item[{\rm($\star$)}] every spoke $P$ of $\Sigma$ and each base $Q$ of $P$ satisfy $|P|\ge |\delta_\psi(Q)|$.
\end{itemize}
\end{lemma}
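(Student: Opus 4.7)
For necessity, I would use a winding-number argument: if $\varphi$ extends $\psi$ and $P$ is a spoke from $u$ to $v$ with base $Q$, the closed walk $P\cup Q^{-1}$ bounds a sub-disk of $\Sigma$ and hence is contractible, so $\delta_\varphi(P\cup Q^{-1})=0$. This gives $\delta_\varphi(P)=\delta_\varphi(Q)=\delta_\psi(Q)$, and combined with the trivial bound $|\delta_\varphi(P)|\le |P|$ it yields $|P|\ge |\delta_\psi(Q)|$.

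For sufficiency, the plan is to build an integer-valued height function on $V(G)$ lifting $\psi$ and reduce it mod $3$. First I would show $G$ is bipartite: an odd cycle $C\subseteq G$ would bound a sub-disk of $\Sigma$ containing, say, $f$ interior $4$-faces of $G$ and $e'$ interior edges, giving the impossible parity relation $4f=2e'+|C|$. Let $\pi:V(G)\to\{0,1\}$ denote the bipartition. Using that the winding number of $\psi$ on $B$ is $0$, I would lift $\psi$ to an integer function $\tilde\psi:V(B)\to\mathbb{Z}$ with $\tilde\psi\equiv\psi\pmod 3$ and $|\tilde\psi(x)-\tilde\psi(y)|=1$ on edges of $B$; the parity of $\tilde\psi$ then alternates along $B$ and hence coincides with $\pi|_{V(B)}$ up to a global swap.

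The crucial step is to deduce from $(\star)$ the Lipschitz bound $d_G(u,v)\ge |\tilde\psi(u)-\tilde\psi(v)|$ for all $u,v\in V(B)$. Taking a shortest $u$-to-$v$ path in $G$, I would decompose it into an alternating sequence of maximal boundary subpaths and spokes. Each boundary segment tautologically has length at least the absolute $\tilde\psi$-difference of its endpoints, while each spoke satisfies this via $(\star)$ applied to one of its bases. (In a disk, the two bases of a spoke yield the same value of $|\delta_\psi|$ because the winding number of $\psi$ on $B$ vanishes, so either base works.) Telescoping gives the bound.

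Finally, define $\tilde h(v):=\min_{u\in V(B)}\bigl(\tilde\psi(u)+d_G(v,u)\bigr)$. The Lipschitz bound above forces $\tilde h|_{V(B)}=\tilde\psi$; the triangle inequality gives $|\tilde h(v_1)-\tilde h(v_2)|\le 1$ on every edge; and a short parity calculation shows that $\tilde h(v)\equiv \pi(v)+c\pmod 2$ for a fixed constant $c$, because for any $u^*$ attaining the minimum, $\tilde\psi(u^*)+d_G(v,u^*)\equiv(\pi(u^*)+c)+(\pi(v)+\pi(u^*))\equiv\pi(v)+c\pmod 2$. Hence the endpoints of any edge have heights of opposite parity, so the Lipschitz inequality tightens to $|\tilde h(v_1)-\tilde h(v_2)|=1$. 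Setting $\varphi(v):=\tilde h(v)\bmod 3$ (with colors in $\{1,2,3\}$) gives the desired extension; components of $G$ disjoint from $B$ are bipartite quadrangulations of open sub-disks and can be $3$-colored independently. The main obstacle is the parity argument: without it one only obtains $|\tilde h(v_1)-\tilde h(v_2)|\le 1$, whereas a proper $3$-coloring requires the strict equality $=1$.
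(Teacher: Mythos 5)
Your proof is correct and takes a genuinely different route from the paper for the sufficiency direction. The paper works in the planar dual: it places the edges of $B$ whose $\delta_\psi$-value is $+1$ (resp.\ $-1$) at a source $s$ (resp.\ sink $t$), shows via condition~($\star$) that every $s$--$t$ edge-cut has size at least $|S^\star|$, applies Menger's theorem to obtain that many edge-disjoint $s$--$t$ paths, turns this into a nowhere-zero $\mathbb{Z}_3$-flow, and invokes Tutte's flow--coloring duality. Your approach instead stays in the primal graph: you lift $\psi$ to an integer height function $\tilde\psi$ on $V(B)$ (possible because $\omega_\psi(B)=0$), derive the Lipschitz bound $d_G(u,v)\ge|\tilde\psi(u)-\tilde\psi(v)|$ from~($\star$) by decomposing a geodesic into boundary edges and spokes, extend $\tilde\psi$ by the standard infimal convolution $\tilde h(v)=\min_u(\tilde\psi(u)+d_G(v,u))$, and use bipartiteness to upgrade $|\tilde h(v_1)-\tilde h(v_2)|\le 1$ to equality on edges, whence reduction mod~$3$ gives the coloring. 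Both arguments are constructive; the paper's has the advantage of directly giving a max-flow algorithm (used in Section~4 for the disk case of Theorem~\ref{thm-mainalg}), while yours is more elementary and makes the role of the winding-number / Lipschitz obstruction transparent. Two small points to tighten: in the necessity step, contractibility of $P\cup Q^{-1}$ alone only gives $\delta_\varphi\equiv 0\pmod 3$; you need that the enclosed region is tiled by $4$-faces (each with zero winding number) to conclude $\delta_\varphi=0$. And since a quadrangulation of the disk is necessarily connected (a closed $2$-cell face cannot contain a separate component), the final caveat about components disjoint from $B$ is vacuous.
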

\begin{proof}
Let us first consider the case that $B$ contains a spoke $P$ and its base $Q$ such that $|P|<|\delta_\psi(Q)|$.
Let $H$ be the subgraph of $G$ drawn in the closed disk $\Delta$ bounded by the cycle $C=Q\cup P$.  Suppose that $\psi$ extends to a $3$-coloring $\varphi$ of $G$,
and consider the restriction of $\varphi$ to $H$.  We have $\omega_\varphi(C)=\frac{1}{3}\delta_\varphi(C)=\frac{1}{3}(\delta_\psi(Q)+\delta_\varphi(P))$.  However,
$|\delta_\psi(Q)+\delta_\varphi(P)|\ge |\delta_\psi(Q)|-|\delta_\varphi(P)|\ge |\delta_\psi(Q)|-|P|>0$, and thus $\varphi$ does not have zero winding number on $C$.  This contradicts
Corollary~\ref{cor-win0}, and thus no $3$-coloring extends $\psi$.

Next, let us consider the case that ($\star$) holds.  Let $B=b_1b_2\ldots b_k$, let $S$ be the set of edges $b_ib_{i+1}\in E(B)$ such that $\psi(b_{i+1})-\psi(b_i)\in \{1,-2\}$, where $b_{k+1}=b_1$,
and let $T=E(B)\setminus S$.  Note that since $\psi$ has zero winding number on $B$, we have $|S|=|T|$.
Let $\Pi$ be the sphere obtained from the disk containing $G$ by adding a disjoint open disk $\Lambda$ with the boundary $B$.
Let $G^\star$ be the dual of $G$ in $\Pi$.  Let $S^\star$ and $T^\star$ denote the sets of edges of $G^\star$ corresponding to the edges
of $S$ and $T$, respectively.  Let $H$ be the graph obtained from $G^\star$ by splitting the vertex corresponding to $\Lambda$ to two non-adjacent vertices $s$ and $t$, where $s$ is incident
with the edges of $S^\star$ and $t$ is incident with the edges of $T^\star$.

Suppose for a contradiction that $H$ contains an edge-cut $K$ separating $s$ from $t$ such that $|K|<|S^\star|$.
Let us choose such a cut $K$ for which the set $K_0$ of edges of $K$ not incident
with $\{s,t\}$ is as small as possible.
Let $Z_s$ and $Z_t$ be connected components of $H-\{s,t\}-K$ incident with an edge of $S^\star\setminus K$ and an edge of $T^\star\setminus K$, respectively.
Let $K_1$ be the set of edges of $K_0$ incident with a vertex of $Z_s$, and let $K_2$ be the set of edges of $K$ from $Z_s$ to $t$.
Since $H-\{s,t\}$ is connected, it follows that $K_1\neq\emptyset$.  Let $S_1$
be the set of edges of $S^\star$ from $s$ to $Z_s$.  Note that $K'=(K\setminus (K_1\cup K_2))\cup S_1$ is an edge-cut separating $s$ from $t$.
Since $K_1$ is nonempty and $K$ contains as few edges
not incident with $s$ or $t$ as possible among the edge-cuts of size less than $|S^\star|$, $K'$ must have size at least $|S^\star|$.
It follows that $|K'|>|K|$, and thus $|S_1|>|K_1|+|K_2|$.  Furthermore, note that
$K''=K_1\cup K_2\cup (S^\star\setminus S_1)$ is an edge-cut separating $s$ from $t$ and $|K''|=|S^\star|-|S_1|+|K_1|+|K_2|<|S^\star|$.
Since $K$ was chosen with the number $|K_0|$ of edges not incident with $\{s,t\}$ minimum and $K_1\subseteq K_0$, it follows that $K_0=K_1$; hence,
all edges of $K_0$ are incident with $Z_s$.
Symmetrically, all edges of $K_0$ are incident with $Z_t$, and thus $H-\{s,t\}-K$ has exactly two components $Z_s$ and $Z_t$.  Let $P$ be the subgraph of $G$ with edges corresponding to those in $K_0$.
Since $H-\{s,t\}-K$ has exactly two components, the graph $B+P$ (drawn in the disk $\Sigma$) has exactly two faces, i.e., $P$ is a spoke of $B$.  Let $Q$ be the subpath of $B$ whose edges correspond to $K_2\cup S_1$.
Note that $||S_1|-|K_2||=|\delta_\psi(Q)|$.  However, $|P|=|K_0|=|K\setminus ((S^\star\setminus S_1)\cup K_2)|=|K|-|S^\star|+|S_1|-|K_2|<|S_1|-|K_2|\le|\delta_\psi(Q)|$, which contradicts ($\star$).

We conclude that every edge-cut in $H$ separating $s$ from $t$ has size at least $|S^\star|$.  By Menger's theorem, $H$ contains pairwise edge-disjoint paths $P_1$, \ldots, $P_{|S^\star|}$ joining $s$ with $t$.
Note that all vertices of $H'=H-E(P_1\cup P_2\cup\ldots\cup P_{|S^\star|})$ have even degree, and thus $H'$ is a union of pairwise edge-disjoint cycles $C_1$, \ldots, $C_m$.
For $1\le i\le m$, direct the edges of $C_i$ so that all vertices of $C_i$ have outdegree $1$.  For $1\le i\le |S^\star|$, direct the edges of $P_i$ so that all its vertices except for $t$ have outdegree $1$.
This gives an orientation of $H$ such that the indegree of every vertex of $V(H)\setminus \{s,t\}$ equals its outdegree, $s$ has indegree $0$ and $t$ has outdegree $0$.
Consider the corresponding orientation of $G^\star$; since $|S^\star|=|T^\star|$, the indegree of the vertex corresponding to $\Lambda$ equals its outdegree.  Therefore, the orientation
defines a nowhere-zero $Z_3$-flow in $G^\star$.  By Tutte~\cite{tutteflow}, this nowhere-zero $Z_3$-flow corresponds to a $3$-coloring $\varphi$ of $G$, and the orientations of the edges of $S^\star$ and $T^\star$
were chosen so that $\varphi\restriction B$ is equal to $\psi$ (up to a permutation of colors).
\end{proof}

Note that if $G$ is boundary-linked, then every spoke $P$ has a base $Q$ such that $|P|\ge |Q|$,
and thus $|P|\ge |Q|\ge |\delta_\psi(Q)|$.  Moreover, consider the other base $Q'$ of $P$;
if $\psi$ has winding number zero on the boundary cycle, then $|\delta_\psi(Q')|=|\delta_\psi(Q)|$,
and thus also $|P|\ge |\delta_\psi(Q')|$.  Therefore, Lemma~\ref{lem-disk} implies Theorem~\ref{thm-main} when $\Sigma$ is a disk.

\begin{corollary}\label{cor-disk}
Let $G$ be a boundary-linked quadrangulation of a disk.
Let $\psi$ be a $3$-coloring of the boundary cycle $B$ of $G$.
Then $\psi$ extends to a $3$-coloring of $G$ if and only if it satisfies the winding number constraint (that is, the winding number of $\psi$
on $B$ is $0$).
\end{corollary}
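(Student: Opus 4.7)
The plan is to derive this directly from Lemma~\ref{lem-disk}. For the ``only if'' direction, if $\psi$ extends to a $3$-coloring $\varphi$ of $G$, then Corollary~\ref{cor-win0} applied to $G$ (whose single boundary cycle is $B$) immediately gives $\omega_\psi(B)=\omega_\varphi(B)=0$, which is precisely the winding number constraint in the disk case.

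For the ``if'' direction, assume $\omega_\psi(B)=0$. I will verify hypothesis ($\star$) of Lemma~\ref{lem-disk} and then invoke that lemma. Let $P$ be any spoke of $G$ and let $Q_1,Q_2$ be the two bases of $P$ in $B$ (in a disk every spoke has two bases). The main observation is that $Q_1$ and $Q_2$ together traverse $B$, so
\[
\delta_\psi(Q_1)+\delta_\psi(Q_2)=\delta_\psi(B)=3\omega_\psi(B)=0,
\]
which yields $|\delta_\psi(Q_1)|=|\delta_\psi(Q_2)|$. Thus it suffices to check the inequality $|P|\ge|\delta_\psi(Q_i)|$ for just one of the two bases.

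Now I will use the boundary-linked hypothesis. Since $G$ is boundary-linked and $P$ is in particular a path with both ends on $B$, it cannot be the case that both arcs of $B$ between its endpoints are strictly longer than $P$; hence at least one base, say $Q_1$, satisfies $|Q_1|\le|P|$. Since trivially $|\delta_\psi(W)|\le|W|$ for any walk $W$, we obtain $|\delta_\psi(Q_1)|\le|Q_1|\le|P|$, and by the observation of the previous paragraph the same bound holds for $Q_2$. Therefore ($\star$) holds for every spoke and every base, so Lemma~\ref{lem-disk} yields the desired extension.

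There is no real obstacle here; the only subtlety is noticing that the boundary-linked condition on a disk, as stated, only controls one of the two bases of a spoke, while ($\star$) demands the bound for each base. This gap is closed for free by the winding-number-zero hypothesis, which forces the two bases to have equal $|\delta_\psi|$ values via the telescoping computation above.
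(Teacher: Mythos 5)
Your proof is correct and takes the same overall route as the paper: verify condition ($\star$) and invoke Lemma~\ref{lem-disk}. In fact your version is \emph{more} careful than the paper's own one-line derivation, which simply asserts that boundary-linkedness gives $|P|\ge|Q|$ for every spoke $P$ and every base $Q$. As you correctly observe, the boundary-linked condition on a disk only bounds the \emph{shorter} of the two arcs of $B$ between the endpoints of $P$; it is easy to construct boundary-linked quadrangulations of the disk in which a spoke is strictly shorter than its longer base (for instance, a hexagonal boundary with a single hub vertex joined to three alternating boundary vertices has a spoke of length $2$ through the hub whose longer base has length $4$). Your telescoping identity $\delta_\psi(Q_1)+\delta_\psi(Q_2)=\delta_\psi(B)=3\omega_\psi(B)=0$ (with the two bases traversed consistently around $B$; with the orientation convention in which both go from $u$ to $v$ the conclusion is $\delta_\psi(Q_1)=\delta_\psi(Q_2)$, and either way $|\delta_\psi(Q_1)|=|\delta_\psi(Q_2)|$) is exactly what transfers the bound from the shorter base to the longer one, and this is genuinely where the hypothesis $\omega_\psi(B)=0$ enters. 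So the proposal is not merely correct; it fills in a step the paper elides.
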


\section{General surfaces}\label{sec-main}

We will need the following simple observation.

\begin{lemma}\label{lemma-pathcol}
Let $R=r_0r_1\ldots r_n$ be a path of even length, let $w$ be an even integer such that $|w|\le n$, and let $c_0, c_n\in\{1,2,3\}$
satisfy $c_n\equiv c_0 + w\pmod{3}$.  Then there exists a $3$-coloring $\varphi:V(R)\to \{1,2,3\}$ such that
$\varphi(r_0)=c_0$, $\varphi(r_n)=c_n$ and $\delta_\varphi(R)=w$.
\end{lemma}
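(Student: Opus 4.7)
The plan is to give a direct constructive proof. The idea is that each edge of $R$ contributes $\pm 1$ to $\delta_\varphi(R)$, so the whole problem reduces to choosing a $\pm 1$-sequence of length $n$ that sums to $w$ and then reading off the coloring.

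More concretely, I would set $a=(n+w)/2$ and $b=(n-w)/2$. Because $n$ and $w$ are both even and $|w|\le n$, these are non-negative integers with $a+b=n$ and $a-b=w$. I would then pick any sequence $\varepsilon_1,\varepsilon_2,\ldots,\varepsilon_n\in\{+1,-1\}$ containing exactly $a$ entries equal to $+1$ and $b$ entries equal to $-1$, and define
\[
\varphi(r_i) \;\equiv\; c_0 + \sum_{j=1}^{i}\varepsilon_j \pmod{3}, \qquad 0\le i\le n,
\]
interpreting the result as an element of $\{1,2,3\}$.

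The verification then has three routine parts. First, $\varphi$ is a proper coloring: consecutive vertices differ by $\pm 1\pmod 3$, hence receive distinct colors. Second, by the definition of $\delta_\varphi$, an edge on which the color increases by $1\pmod 3$ contributes $+1$ and an edge on which it decreases by $1\pmod 3$ contributes $-1$, so $\delta_\varphi(r_{i-1},r_i)=\varepsilon_i$ and therefore $\delta_\varphi(R)=\sum_{j=1}^n\varepsilon_j=a-b=w$. Third, $\varphi(r_0)=c_0$ by construction, and
\[
\varphi(r_n)\equiv c_0+w\equiv c_n\pmod 3
\]
by the hypothesis, so $\varphi(r_n)=c_n$.

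There is no real obstacle; the only thing to be careful about is matching the parity and magnitude conditions on $w$ to the existence of the $\pm 1$-sequence, which is exactly what the hypotheses $|w|\le n$ and $w$ even (together with $n$ even) guarantee. The congruence $c_n\equiv c_0+w\pmod 3$ is precisely what makes the endpoint condition $\varphi(r_n)=c_n$ automatic once the $\pm 1$-sequence is fixed, so no case analysis on the colors is needed.
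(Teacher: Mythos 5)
Your proof is correct. The key observation in both your argument and the paper's is the same: a $3$-coloring of the path is determined by the starting color and a choice of $\pm 1$ for each edge, and the three constraints (endpoints, $\delta_\varphi(R)=w$) translate into a counting condition on the signs. The paper packages this as an induction on $n$: when $n>|w|$ it prepends two vertices to a colored shorter path (using $\varphi(r_0)=\varphi(r_2)$ so the two new edges cancel), and the base case $n=|w|$ uses the all-same-sign coloring. You instead unroll the induction and write down the sign sequence directly, taking $a=(n+w)/2$ plus-signs and $b=(n-w)/2$ minus-signs. Your version is a bit more transparent and makes the role of the parity and magnitude hypotheses ($n$ and $w$ even, $|w|\le n$) visible at a glance; the paper's inductive form is marginally shorter to state. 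Both are fully rigorous and interchangeable.
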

\begin{proof}
We prove the claim by induction on $n$.  If $n>|w|$, then by the induction hypothesis, there exists a $3$-coloring $\varphi$
of $R'=R-\{r_0,r_1\}$ such that $\varphi(r_2)=c_0$, $\varphi(r_n)=c_n$ and $\delta_\varphi(R')=w$.
To obtain a requested $3$-coloring of $R$, we set $\varphi(r_0)=c_0$ and choose $\varphi(r_1)$ arbitrarily.

Hence, we can assume that $n=|w|$.
Then, the unique $3$-coloring $\varphi$ such that $\varphi(r_0)=c_0$ and $\delta_\varphi(r_i,r_{i+1})=\text{sgn}(w)$
for $i=0,\ldots, n-1$
satisfies $\varphi(r_n)=c_n$ and $\delta_\varphi(R)=w$.
\end{proof}

We also need the following observation on winding numbers.  It is proved by considering the cancellation
on the non-boundary edges of $H$; the walk $W$ passes twice through these edges, in the orientable case necessarily in
the opposite directions.
\begin{observation}\label{obs-windin}
Let $G$ be a quadrangulation of a surface $\Sigma$ with boundary cycles $B_1$, \ldots, $B_k$,
oriented in the clockwise direction if $\Sigma$ is orientable and arbitrarily otherwise.
Let $H$ be a subgraph of $G$ containing all boundary cycles such that $H$ has exactly one face $f$ and this
face is $2$-cell. Let $W$ be the closed walk bounding $f$, in the counterclockwise direction around $f$ in the case $\Sigma$
is orientable.  Let $\varphi$ be a $3$-coloring of $H$ and let $\psi$ be the restriction of $\varphi$ to the boundary cycles.
\begin{itemize}
\item If $\Sigma$ is orientable, then $\omega_\varphi(W)=\sum_{i=1}^k \omega_\psi(B_i)$.
\item If $\Sigma$ is non-orientable and $d$ denotes the number of boundary edges traversed by $W$ in the same direction
as the corresponding boundary cycle plus the number of non-boundary edges traversed by $W$ twice in the same direction,
then $p(G,B_1,\ldots, B_k)=2d\bmod 4$ and $\delta_\varphi(W)+2d\equiv\omega_\varphi(W)+2d\equiv \sum_{i=1}^k \omega_\psi(B_i) \pmod 4$.
\end{itemize}
\end{observation}

\noindent We say a $3$-coloring of a cycle $C$ is \emph{tamed} by a set $T$ of edges if
\begin{itemize}
\item each two vertices at distance exactly three in $C$ have different colors, and
\item $C-T$ is a union of paths of length at least one and only two colors are used on each of the paths.
\end{itemize}
For an integer $k$, we say the coloring is \emph{$k$-tamed} if additionally $|E(C)\cap T|\le k$.
A spoke of $C$ is \emph{$T$-long} if it has a base containing at least two edges of $T$.
Let us note the following property of tamed colorings.
\begin{observation}\label{obs-tamedtwo}
Let $\psi$ be a $3$-coloring of a cycle $C$ and let $B$ be a subpath of~$C$.  If $\psi$ is tamed
by a set $T$ of edges and $|E(B)\cap T|\le 1$, then $|\delta_\psi(B)|\le 2$.
\end{observation}

Let us now prove a weak variant of Theorem~\ref{thm-main}.  Let us remark that
the first two conditions from the statement of the following lemma imply that
every connected essential subgraph has at least $\rho$ edges; however, since
the first assumption applies also to the cycles that surround cuffs, these
assumptions are significantly more restrictive than $\rho$-genericity.

\begin{lemma}\label{lemma-weak}
For every surface $\Sigma$ and every integer $k$, there exist integers $d$ and $\rho$ such that the following claim holds.
Let $G$ be a quadrangulation of $\Sigma$ and $T$ a set of edges of $G$ such that
\begin{itemize}
\item[(a)] every non-contractible cycle in $G$ has length at least $\rho$,
\item[(b)] every path in $G$ of length less than $\rho$ intersecting the boundary exactly in its ends is a spoke with a base
(and in particular both its ends are incident with the same cuff), and
\item[(c)] every $T$-long spoke of a boundary cycle of $G$ has length at least $d$.
\end{itemize}
Let $\psi$ be a $3$-coloring of the boundary cycles of $\Sigma$.
If $\psi$ is $k$-tamed by $T$ on every boundary cycle and satisfies the winding number constraint, then it extends to a $3$-coloring of~$G$.
\end{lemma}
\begin{proof}
Let $g$ be the Euler genus of $\Sigma$ and $c$ the number of cuffs of $\Sigma$.
By Gr\"otzsch's Theorem, Theorem~\ref{thm:gimtho} and Corollary~\ref{cor-disk}, the claim holds (with $d=\rho=0$)
if $\Sigma$ is the sphere, the projective plane or the disk.  Hence, assume that $g+c\ge 2$.
Let $b=g+c-1$, $\mu=12b+2kc+2$, $\beta=12b\mu$, $\lambda=\mu+2\beta$, $\rho=2b\lambda$ and $d=\beta+4\mu$.

Let $H$ be a subgraph of $G$ with as few edges as possible such that $H$ contains all boundary cycles of $G$ and every face of $H$ is homeomorphic to an open disk.
Observe that $H$ is connected and has exactly one face $\Lambda$.  By cutting along $H$, we obtain an embedding of a graph $G'$ in a closed disk $\Delta$ together with a continuous surjection $\theta:\Delta\to\Sigma$
mapping $G'$ to $G$ such that the restriction of $\theta$ to the interior of $\Delta$ is a homeomorphism to $\Lambda$.  Furthermore, $\theta$ maps the cycle $\Gamma$ of $G'$ forming the boundary of $\Delta$
to the boundary walk $W$ of the face $\Lambda$ of $H$.

Since $g+c\ge 2$, $H$ has minimum degree at least two and $H$ is not a cycle.  Let $X$ be the set of vertices of $H$ of degree at least three
and let $\PP$ be the set of all subgraphs $P$ of $H$ such that
either $P$ is a path in $H$ joining two vertices $u,v\in X$, or $P$ is a cycle containing a vertex $u=v\in X$,
and such that no other vertex of $P$ belongs to $X$ and $P$ is not a part of a boundary cycle.
If $P$ has length less than $\lambda$, then let $M(P)$ be the null graph.  Otherwise, let $M(P)$ be a subpath of $P$ of length $\mu$
chosen so that the distance between the endvertices of $M(P)$ and $\{u,v\}$ in $P$ is at least $\beta$. 
Let $M$ be the union of $M(P)$ over all $P\in\PP$.

Note that by Euler's formula, $H$ has at most $2b$ vertices of degree at least three and that $|\PP|\le 3b$.
Let $L$ be obtained from $H$ by removing edges of $M$ and of the boundary cycles, and then removing isolated vertices.
Consider a path $Q$ in $H$ intersecting the boundary of $\Sigma$ exactly in its ends.
Note that if both ends of $Q$ belong to the same boundary cycle $C$, then $C+Q$ does not contain a contractible cycle,
since $H$ has only one face, and thus the spoke $Q$ has no base.  By (b), the path $Q$ has length at least
$\rho>(|X|-1)(\lambda-1)$, and thus $Q$ contains a subpath of length at least $\lambda$ with no internal vertices belonging to $X$.
We conclude that $M$ intersects $Q$, and thus $L$ contains no path intersecting the boundary of $\Sigma$
exactly in its ends.  Similarly, since $H$ has only one face, every cycle in $H$ is non-contractible, and thus has length at least $\rho$
by (a).  It follows that every cycle in $H$ has an edge belonging to either the boundary of $\Sigma$ or $M$.
Therefore, $L$ is a union of trees, each of them intersecting the boundary of $\Sigma$ in at most one vertex.

If $\Sigma$ is orientable, then let $M'=M$.  If $\Sigma$ is non-orientable, then there exists a path $R$ forming a component of $M$ such that
the walk $W$ traverses $R$ twice in the same direction. In this case, we set $M'=M-R$.

We now define a $3$-coloring $\varphi$ of $H$; this will also give a $3$-coloring $\varphi'$~of~$\Gamma$ such that
$\varphi'(x)=\varphi(\theta(x))$ for each $x\in V(\Gamma)$.  For each vertex $v$ incident with the boundary of $\Sigma$, we let
$\varphi(v)=\psi(v)$.  Let $\QQ$ be the set of paths of $G-T$ drawn in the boundary of $\Sigma$; since $\psi$
is tamed by $T$ on each boundary cycle, $\psi$ uses exactly two colors on each path in $\QQ$.
We extend $\varphi$ to $L$ so that each component of $L$ and the path of $\QQ$ that it intersects (if any)
is colored by exactly two colors.  Next, for each path $v_0v_1\ldots v_{\mu}$ of $M'$, we extend the coloring
of the component of $L$ containing $v_0$ to $v_0v_1\ldots v_{\mu-2}$ using the same two colors, and we choose $\varphi(v_{\mu-1})$
distinct from $\varphi(v_{\mu-2})$ and $\varphi(v_\mu)$.

Finally, in the case that $\Sigma$ is non-orientable, we need to determine the coloring of $R$.
Let $P_1$ and $P_2$ be the two paths obtained from $\Gamma$ by removing the edges and the internal vertices of $\theta^{-1}(R)$.
For $i\in \{1,2\}$, let $u_i$ and $v_i$ be the first and the last vertex of $P_i$, respectively,
let $w_i=\delta_{\varphi'}(P_i)$ and let $w=w_1+w_2$.  Since $\psi$ is parity-compliant, $|R|=\mu$ is even, and the walk $W$
traverses $R$ twice in the same direction, Observation~\ref{obs-windin} implies that $w$ is divisible by $4$.
Moreover, since $W=\theta(\Gamma)$ traverses $R$ twice in the same direction, we have $\theta(u_1)=\theta(u_2)$ and $\theta(v_1)=\theta(v_2)$,
and thus $\varphi'(u_1)=\varphi'(u_2)$ and $\varphi'(v_1)=\varphi'(v_2)$.  It follows that $w_1\equiv w_2\pmod{3}$, and thus
$w_1\equiv w/2\pmod{3}$.  Furthermore, by the construction of $\varphi$, there exists a set $Y\subseteq E(P_1\cup P_2)$ of size
at most $12b+kc$ such that $\varphi'$ uses at most two colors on each component of $(P_1\cup P_2)-Y$.  Let $a$ be the number of
components of $(P_1\cup P_2)-Y$ and note that $a\le 2+|Y|$.  It follows that $|w|\le |Y|+a\le 2|Y|+2<2\mu=2|R|$.
Since $R$ has even length and $w$ is divisible by $4$, Lemma~\ref{lemma-pathcol} implies that we can extend $\varphi$ to $R$ so
that $\delta_{\varphi}(R)=-w/2$.

Note that $\omega_{\varphi'}(\Gamma)=0$, by Observation~\ref{obs-windin} if $\Sigma$ is orientable and 
by the choice of the coloring of $R$ if $\Sigma$ is non-orientable.
We claim that $\varphi'$ extends to a $3$-coloring of $G'$.  This will finish the argument, as the $\theta$-image of this $3$-coloring
gives a $3$-coloring of $G$ extending $\psi$.

Suppose for a contradiction that $\varphi'$ does not extend to a $3$-coloring of $G'$.
By Lemma~\ref{lem-disk}, there exists a spoke $P$ of $\Gamma$ which has a base $Q$ such that $|P|<|\delta_{\varphi'}(Q)|$.
By the construction of $\varphi'$, we have $|\delta_{\varphi'}(Q)|\le 4\mu$, and thus $|P|<4\mu$.
Let $s$ and $t$ be the endpoints of $P$.
Let $F$ be the subgraph of $H$ formed by the edges $e\in E(\theta(Q))$ such that either $e$ is a part of a boundary cycle, or
exactly one of the two edges of $\theta^{-1}(e)$ belongs to $Q$.
Note that $F$ is obtained from the walk $\theta(Q)$ by removing edges that appear twice in the walk.
Moreover, the walk $\theta(Q)$ enters and leaves each vertex other than $s$ and $t$ the same number of times.
Hence, if $\theta(s)\neq\theta(t)$, then $\theta(s)$ and $\theta(t)$ have odd degree in $F$
and all other vertices of $F$ have even degree.  Consequently, $F$ contains a path $Q'$ joining $\theta(s)$ with $\theta(t)$.

For any path $S\in \PP$, we have $|E(S\cap Q')|\le|P|$, since otherwise $H-(S\cap Q')+\theta(P)$ has fewer edges than $H$ and it has
only one face homeomorphic to a disk (since only one of the paths $\theta^{-1}(S\cap Q')$ belongs to $Q$), contrary to the choice of $H$.
Thus, at most $3b|P|<12b\mu\le \beta$ edges of $Q'$ do not belong to the boundary of $\Sigma$.  We conclude that
if $Q'\cap M\neq \emptyset$, then $Q'$ is a subpath of a path in $\PP$.

If $Q'\cap M\neq \emptyset$, then let $Q_1=Q'$.
Otherwise, either $Q'\subseteq L$, or there exists a boundary cycle $C$
such that $Q'\subset C+L$.  If no edge of $Q'$ belongs to a boundary cycle, then let $Q_1=Q'$.
If some edge of $Q'$ belongs to the boundary cycle $C$,
then $Q'\cup \theta(P)$ contains a spoke $A$ of $C$, and since every spoke without a base has at least $\rho$ edges by (b),
this spoke has a base $B$.  In this case, we let $Q_1=(A\cup B)\cap (C\cup L)$.
Note that $|A|=|E(Q')\setminus E(C)|+|P|<\beta+4\mu=d$.

Let $K=Q_1+\theta(P)$.  Note that $K$ is a contractible cycle; when no edge of $Q'$ belongs to a boundary cycle, this follows by (a),
since $|K|<\rho$.
Let $\Lambda'\subset \Sigma$ be the open disk bounded by $K$.  Note that $\theta^{-1}(\Lambda')$ is one of the two faces of $\Gamma+P$,
and thus $|\delta_\varphi(Q_1)|=|\delta_{\varphi'}(Q)|>|P|$.
This implies that $|Q_1|>|P|$.  Therefore, $Q_1$ is not a subpath of a path in $\PP$,
as otherwise we could replace $Q_1$ by $P$ in $H$, contradicting the minimality of $H$.
It follows that $Q'\cap M=\emptyset$.

Since $G$ is a quadrangulation, $|Q_1|$ and $|P|$ have the same parity, and because
$\delta_\varphi(Q_1)$ and $|Q_1|$ have the same parity, we have $|\delta_\varphi(Q_1)|\ge |P|+2$.
Therefore, $|\delta_\varphi(Q_1)|>2$.  Since $\varphi$ uses only two colors on each component of $L$,
it follows thay $Q_1$ is not a subpath of a single component of $L$, and thus $Q_1$ must intersect a boundary cycle.
Hence, $K$ is a concatenation of the spoke $A$ with the base $B$.
Since $|\delta_\varphi(Q_1)|>2$, the construction of $\varphi$ and Observation~\ref{obs-tamedtwo} imply
$|E(B)\cap T|\ge 2$, and thus the spoke $A$ is $T$-long.  However, $|A|<d$, contradicting the assumptions.
\end{proof}

We now aim to eliminate the extra assumptions from the statement of Lemma~\ref{lemma-weak}, most importantly the one that
non-contractible cycles that surround cuffs have length at least $\rho$.  To this end, we employ ideas developed in~\cite{rs7}.
In particular, we need the following lemma (compared to the statement in~\cite{rs7}, we take advantage of the fact that we
deal with quadrangulations, and thus we can state the assumptions in terms of the paths and cycles in the graph rather than in terms
of curves in the surface intersecting the graph only in vertices).

\begin{lemma}[{Robertson and Seymour~\cite[(5.8)]{rs7}}]\label{lemma-grid}
Let $H$ be a quadrangulation of the cylinder with boundary cycles $B_1$ and $B_2$, and let $r,s\ge 1$ be integers.  Suppose that the distance between $B_1$ and $B_2$
in $H$ is at least $2r-3$ and that every non-contractible cycle in $H$ has length at least $2s$.  Then $H$ contains
pairwise vertex-disjoint non-contractible cycles $C_1$, \ldots, $C_r$ and pairwise vertex-disjoint paths $P_1$, \ldots, $P_s$ from
$B_1$ to $B_2$ such that $C_i\cap P_j$ is a path for all $i\in\{1,\ldots,r\}$ and $j\in\{1,\ldots,s\}$.
\end{lemma}

For a quadrangulation $G$ of a surface, let $\zeta(G)$ denote the minimum number of edges of a connected essential subgraph of $G$.
The following lemma gives us a cylinder around a cuff to which we can later apply Lemma~\ref{lemma-grid}.

\begin{lemma}\label{lemma-surcyc}
Let $a\ge 1$ be an integer, let $G$ be a quadrangulation of a surface $\Sigma$, where $\Sigma$ has non-zero genus or at least two cuffs,
let $B$ be a boundary cycle of $G$, and let $C$ be the incident cuff.
If $\zeta(G)\ge 2a+4$, then there exists a cycle $K$ in $G$ surrounding $C$ such that for each vertex $v\in V(K)$,
the distance from $v$ to $B$ is $a$ or $a+1$.  Moreover, for every non-contractible cycle $K'$ in $G$ drawn in the part of $\Sigma$
between $B$ and $K$ (inclusive), denoting by $\Sigma'$ the surface obtained by deleting the part of $\Sigma$ between $B$ and $K'$
(including $B$, but excluding $K'$), and letting $G'$ be the subgraph of $G$ drawn in $\Sigma'$, we have $\zeta(G')\ge \zeta(G)-4a-6$.
\end{lemma}
\begin{proof}
Let $M'$ be the set of vertices of $G$ at distance at most $a+1$ from $B$, and let $M\subset M'$ consist of the vertices
at distance $a$ or $a+1$ from $B$.  For each $v\in M'$, choose a shortest path $P_v$ from $B$
to $v$ in $G$ arbitrarily.  For any edge $uv\in E(G[M'])$, let $B_{uv}$ be a path in $B$ from the end of $P_u$ to the end of $P_v$,
let $W'_{uv}$ be the walk consisting of $P_u$, the edge $uv$ and the reversal of $P_v$, and let $W_{uv}$ be the closed walk
consisting of $W'_{uv}$ and the reversal of $B_{uv}$.
Since $|E(W'_{uv})|\le 2a+3<\zeta(G)$, $W'_{uv}$ does not contain a connected essential subgraph, and thus $W_{uv}$ is contractible
in $\Sigma+\hat{C}$.  Consequently, any cycle $Q$ in $G[M']$ is homotopic in $\Sigma+\hat{C}$ to the concatenation of the paths $B_{uv}$
for $uv\in E(Q)$, and thus $Q$ is either contractible or surrounds $C$ (and in particular, $G[M']$ does not contain
any connected essential subgraph).  Conseqently, this is also the case for the cycles in $G[M]$.

Suppose for a contradiction no cycle in $G[M]$ surrounds $C$, and thus all cycles in $G[M]$ are contractible.  Then there exists
a disjoint union $\Lambda$ of finitely many open disks in $\Sigma$ such that $G[M]\subset \Lambda$.  Observe there exists a simple curve $c$
in $\Sigma$ disjoint from $\Lambda$ and $V(G)$ such that $c$ intersects the boundary of $\Sigma$ exactly in its ends,
one end of $c$ is in $C$ and either the other end of $c$ is in another cuff of $\Sigma$
(if $\Sigma$ has at least two cuffs), or the other end of $c$ is in $C$ and no simple closed curve in $C\cup c$ is contractible (if $\Sigma$ has
non-zero genus).  Let $X$ be the set of edges of $G$ crossed by $c$.  Each edge of $X$ is incident with a vertex at distance strictly
less than $a$ from $B$: Indeed, suppose this is not the case and consider the first edge $e\in X$ along $c$ such that both ends of $e$
are at distance at least $a$ from $B$.  Since $e\not\subset \Lambda$, we have $e\not\in E(G[M])$, and thus at least one end of $e$ is at distance at least $a+2$ from $B$.
However, consider the edge $e'\in X$ that $c$ intersects before $e$.  By the choice of $e$, at least one end of $e'$ is at distance
at most $a-1$ from $B$.  However, $e$ and $e'$ are incident with the same face of length at most four, which is a contradiction.
Let $Y$ be the set of vertices incident with the edges of $X$.  As we just argued, each vertex of $Y$ is at distance at most $a$ from $B$, and thus
$G[Y]\subseteq G[M']$.  Moreover, $G[Y]$ contains a walk homotopic to $c$, and thus $G[Y]$ contains a connected essential subgraph.
This is a contradiction.

Therefore, we can choose $K$ as a cycle in $G[M]$ surrounding $C$.  Note that we can without loss of generality assume
the paths $P_v$ for $v\in V(K)$ were chosen so that they do not cross each other, i.e., so that $P_u\cap P_v$ is either
empty or a path starting in $B$ for each $u,v\in V(K)$.  Moreover, for each $uv\in E(K)$, we can without loss of generality
assume $B_{uv}$ is chosen so that the closed walk $W_{uv}$ is contractible.  Let $\Delta_{uv}$ be the union of $W_{uv}$ and
the closed disk in $\Sigma$ bounded by the cycle in $W_{uv}$.  Note that $\Delta=\bigcup_{uv\in E(K)} \Delta_{uv}$ is exactly the part of $\Sigma$
between $B$ and $K$.

Consider now a non-contractible cycle $K'$ drawn in $\Delta$, and let $\Sigma'$ and $G'$ be as in the statement of the lemma.
Let $Z'$ be a connected essential subgraph of $G'$.  Note that $Z'$ intersects $K'$ in at most two vertices.  For each vertex $x\in V(Z')\cap V(K')$,
let $e_x$ be an edge of $K$ such that $x\in \Delta_{e_x}$.  Since $Z'$ is essential, we have $Z'\not\subseteq \Delta_{e_x}$,
and thus $Z'$ intersects $W_{e_x}$.  Hence, $Z=Z'\cup \bigcup_{x\in V(Z)\cap V(K')} W_{e_x}$ is a connected essential subgraph of $G$
and $\zeta(G)\le |E(Z)|\le |E(Z')|+4a+6$.  It follows that $\zeta(G')\ge \zeta(G)-4a-6$.
\end{proof}

Next, we use these results to obtain new cuffs with tamed colorings.

\begin{lemma}\label{lemma-tame}
Let $k,d\ge 2$ and $\rho\ge 2kd+2$ be integers, let $\nu=\rho+4(k+d)-2$, and let $G$ be a quadrangulation of a surface $\Sigma$,
where $\Sigma$ has non-zero genus or at least three cuffs.  Let $B$ be a boundary cycle of $G$ of length $k$
and let $\psi$ be a $3$-coloring of $B$.  If $\zeta(G)\ge \nu$ and every cycle surrounding the cuff of $B$ has length
at least $k$, then there exists a cycle $Q$ surrounding the cuff of $B$ and a set $T\subseteq E(Q)$, such that,
denoting by $\Pi$ the part of $\Sigma$ between $B$ and $Q$ (inclusive) and letting $\Sigma'=\Sigma\setminus (\Pi\setminus Q)$,
\begin{itemize}
\item $\psi$ extends to a $3$-coloring $\varphi$ of the subgraph $G_B$ of $G$ drawn in $\Pi$ such that $\varphi$ is $k$-tamed by $T$ on $Q$,
\item no cycle in $G$ of length at most $\rho$ surrounding the cuff of $B$ is drawn in~$\Sigma'$,
\item every $T$-long spoke of $Q$ drawn in $\Sigma'$ has length at least $d$, and
\item the subgraph $G'$ of $G$ drawn in $\Sigma'$ satisfies $\zeta(G')\ge \zeta(G)-\rho-8(k+d)+6$.
\end{itemize}
\end{lemma}
\begin{proof}
Let $a=2(k+d)-3$.  Let $B_0$ be a cycle of length at most $\rho$ surrounding the cuff of $B$ such that the part $\Pi_0$ between $B$ and $B_0$ (inclusive)
is maximal, let $\Sigma_0=\Sigma\setminus (\Pi_0\setminus B_0)$, and let $G_0$ be the subgraph of $G$ drawn in $\Sigma_0$.
Since $\Sigma$ either has non-zero genus or at least three cuffs, we have $\zeta(G_0)\ge \zeta(G)-|B_0|\ge \zeta(G)-\rho\ge 2a+4$.
Let $K$ be the cycle obtained by applying Lemma~\ref{lemma-surcyc} in $G_0$, and note that the distance between $B_0$ and $K$ is
at least $a$.

Let us remark that $|B_0|\ge \rho-2\ge 2kd$, since otherwise we could replace an edge of $B_0$ by the rest of the boundary of an incident 4-face
drawn in $\Sigma_0$, contradicting the maximality of $\Pi_0$.  Hence, the choice of $B_0$ implies that every non-contractible cycle
drawn in the cylinder $\Pi_1$ between $B_0$ and $K$ has length at least $2kd$.  By Lemma~\ref{lemma-grid},
there are pairwise vertex-disjoint cycles non-contractible cycles $C_1$, \ldots, $C_{k+d}$ and
pairwise vertex-disjoint paths $P_1$, \ldots, $P_{kd}$ in $G$ drawn in $\Pi_1$, such that $C_i\cap P_j$ is a path for
all $i\in\{1,\ldots, k+d\}$ and $j\in \{1,\ldots, kd\}$.  We can also assume that for $1\le i<j\le k+d$,
the cycle $C_i$ separates $B_0$ from $C_j$, and that for $j\in \{1,\ldots, kd\}$, the path $P_j$ intersects $C_1$ and $C_{k+d}$ exactly
in its ends.  For $1\le i\le kd$, let $\Delta_i\subset \Pi_2$ be the closed disk bounded by the paths $P_i$ and $P_{i+1}$
(where $P_{kd+1}=P_1$) and by subpaths of $C_1$ and $C_{k+d}$ chosen so that $\Delta_i$ does not contain the rest of the paths
$P_1$, \ldots, $P_{kd}$.

Let us set $Q=C_{k+1}$.  
Let $\hat{\Sigma}$ be the surface obtained from $\Sigma$ by patching all holes, let $G^\star$ be the dual graph of $G$ in its
drawing in $\hat{\Sigma}$, and let $b$ be the vertex of $G^\star$ dual to the face formed by the patch over the cuff of $B$.
Let $Q^\star$ be the cut in $G^\star$ consisting of the edges dual to those in $Q$.
For $1\le i\le kd$, note that there exists a path $L^\star_i$ in $G^\star$ such that the first edge of $L^\star_i$ is dual to an edge of $C_1$,
the rest of $L^\star_i$ is drawn within $\Delta_i$, and $E(L^\star_i)\cap Q^\star$ contains exactly the last edge of $L^\star_i$.
Similarly, for $1\le i\le k$, there exists a non-contractible cycle $M^\star_i$ in $G^\star$ drawn between $C_i$ and $C_{i+1}$.
Let $T^\star$ be the set of the ends of the paths $L^\star_1$, $L^\star_{d+1}$, \ldots, $L^\star_{(k-1)d+1}$ in $Q^\star$,
and let $T\subset E(Q)$ be the set of edges of $G$ dual to those in $T^\star$.

We claim that $G^{\star}-(Q^\star\setminus T^\star)$ contains $k$ pairwise edge-disjoint paths $F^\star_1$, \ldots, $F^\star_k$ from $b$ to $T^\star$.
By Menger's theorem, it suffices to show that for every set $Y\subseteq E(G^{\star})$ of size less than $k$, the graph $G^{\star}-(Y\cup (Q^\star\setminus T^\star))$
contains a path from $b$ to $T^\star$.  Indeed, since every non-contractible cycle of $G$ has length at least~$k$, Menger's theorem
implies that $G^{\star}$ contains at least $k$ pairwise edge-disjoint paths from $b$ to $Q^\star$, and at least one of them is disjoint from $Y$;
let us denote this path by $A^\star$.  Furthermore, for some $1\le i,j\le k$, the cycle $M^\star_i$ and the path $L^\star_{d(j-1)+1}$ are disjoint from $Y$.
Then, $(A^\star-Q^\star)+M^\star_i+L^\star_{d(j-1)+1}$ is a connected subgraph of $G^{\star}-(Y\cup (Q^\star\setminus T^\star))$ containing both $b$ and an edge of~$T^\star$.

Note that we can choose the paths $F^\star_1$, \ldots, $F^\star_k$ so that they do not cross each other.
For $i\in\{1,\ldots,k\}$, let $F_i$ denote the set of edges of $G$ dual to the edges of $F^\star_i$.
Let $B=v_1v_2\ldots v_k$, where the edge $v_iv_{i+1}$ belongs to $F_i$ for $1\le i\le k$ (and $v_{k+1}=v_1$).  
Recall that $\Pi$ and $G_B$ are defined in the statement of the lemma.
For each $v\in V(G_B)$, there exists unique $i(v)\in \{1,\ldots, k\}$
such that the face of $G^\star$ dual to $v$ is drawn in the region bounded by $F^\star_{i(v)-1}$ and $F^\star_{i(v)}$,
where $F^\star_0=F^\star_k$.  Let $V_i=\{v\in V(G_B):i(v)=i\}$.  Note that the graph $G_B[V_i]$ is bipartite.
We now color $G_B[V_i]$ by colors $\psi(v_{i-1})$ and $\psi(v_i)$, where $v_0=v_k$, so that $v_i$ keeps the color $\psi(v_i)$.
Let us define the coloring more precisely and argue that it is proper. 

If $k$ is even, then since $G$ is a quadrangulation, $G_B$ is bipartite; let $\iota:V(G_B)\to \{1,2\}$ be a $2$-coloring of $G_B$.
If $k$ is odd, then there exists a $2$-coloring $\iota$ of $G_B-F_k$ such that
$\iota(u)=\iota(v)$ for each $uv\in F_k$.  In both cases, choose $\iota$ so that $\iota(v_1)=1$.
Let $f:\{1,\ldots, k\}\times\{1,2\}\to\{1,2,3\}$ be defined by $f(i,\iota)=\psi(v_i)$ if $i$ and $\iota$ have the same parity
and by $f(i,\iota)=\psi(v_{i-1})$ otherwise.  We define $\varphi:V(G_B)\to\{1,2,3\}$ by $\varphi(v)=f(i(v),\iota(v))$.
Clearly $\varphi$ extends $\psi$.

Consider an edge $uv\in E(G_B)$.  If $i(u)=i(v)$, then $\varphi(u)\neq\varphi(v)$, since $\iota(u)\neq\iota(v)$
and $f(i,1)\neq f(i,2)$ for every $i$.  Thus, we can assume that either $i(u)=i(v)+1$, or $i(u)=k$ and $i(v)=1$.  
Observe that in both cases, $i(u)-i(v)$ and $\iota(u)-\iota(v)$ have the same parity,
and thus either $\varphi(u)=\psi(v_{i(u)})$ and $\varphi(v)=\psi(v_{i(v)})$, or $\varphi(u)=\psi(v_{i(u)-1})$ and $\varphi(v)=\psi(v_{i(v)-1})$.
Therefore, $\varphi(u)\neq \varphi(v)$, and thus $\varphi$ is a $3$-coloring of $G_B$.

Note that $Q\setminus T$ consists of $k$ paths of length at least $d-1\ge 1$, and $\varphi$ uses only two colors
on each of the paths.  Furthermore, whenever $P$ is the union of two such consecutive paths and the edge of $T$ between
them, there exists a color class of $\varphi$ whose complement is an independent set in $P$.
This implies that $\varphi$ is $k$-tamed by $T$ on $Q$.

The choice of $B_0$ implies that no cycle in $G$ of length at most $\rho$ surrounding the cuff of $B$ is drawn in $\Sigma'$.
Consider any $T$-long spoke $S$ of $Q$ drawn in $\Sigma'$, and let $R\subseteq Q$ be the base of $S$ containing
at least two edges of $T$.  Then at least $d$ of the paths $P_1$, \ldots, $P_{kd}$ intersect $R$.
If $S$ does not intersect all these paths, then it cannot be contained in the cylinder between $Q$ and $C_{k+d}$,
and thus it must intersect the cycles $C_{k+2}$, \ldots, $C_{k+d}$.  Consequently, $S$ has length at least $d$.
Finally, according to Lemma~\ref{lemma-surcyc} for $K'=Q$, we have $\zeta(G')\ge \zeta(G_0)-4a-6\ge \zeta(G)-\rho-8(k+d)+6$.
\end{proof}

Finally, we need to combine Lemmas~\ref{lemma-weak} and \ref{lemma-tame}.

\begin{proof}[Proof of Theorem~\ref{thm-main}]
The conclusion of Theorem~\ref{thm-main} holds when $\Sigma$ is the sphere by Gr\"otzsch's theorem,
when $\Sigma$ is the disk by Corollary~\ref{cor-disk},
and when $\Sigma$ is the cylinder by Lemma~4.6 of \cite{trfree5}.
Therefore, we can assume that $\Sigma$ has either positive genus
or at least three cuffs.

Let $d$ and $\rho$ be the constants of Lemma~\ref{lemma-weak} applied for $\Sigma$ and $k$;
witout loss of generality, we can assume $d\ge 2$ and $\rho\ge 2kd+2$.
Let $m$ be the number of cuffs of $\Sigma$ and let $\nu=\rho+m(\rho+8(k+d)-6)$.
We already argued in the introduction (Corollary~\ref{cor-win0} and Observation~\ref{obs-gen})
that the winding number constraint is necessary for the existence of an extension of $\psi$.  Let us now prove that it is sufficient.

Let $B_1$, \ldots, $B_m$ be the boundary cycles of $G$.  For $1\le i\le m$, we apply Lemma~\ref{lemma-tame} at $B_i$,
with $|B_i|$ playing the role of $k$.  Let $Q_i$ be the corresponding cycle surrounding $B_i$, with a set $T_i\subseteq E(Q_i)$
and a $3$-coloring $\varphi_i$ of the subgraph $G_{B_i}$ of $G$ between $B_i$ and $Q_i$ such that
$\varphi_i$ is $k$-tamed by $T_i$ on $Q_i$.  Let $G'$ be the subgraph of $G$ drawn
in the subsurface $\Sigma'$ of $\Sigma$ obtained by deleting the parts between $B_i$ and $Q_i$ (including $B_i$ but excluding $Q_i$).
According to Lemma~\ref{lemma-tame}, we have $\zeta(G')\ge \zeta(G)-m(\rho+8(k+d)-6)\ge \rho$.  Consequently,
every path in $G'$ of length less than $\rho$ intersecting the boundary of $\Sigma'$ exactly in its ends is non-essential,
and thus it is a spoke with a base.  Moreover, every cycle of length less than $\rho$ in $G'$ is non-essential,
and does not surround any of the cuffs by the choice of $Q_1$, \ldots, $Q_m$, and thus is contractible.
Finally, for $T=\bigcup_{i=1}^m T_i$, the choice of $Q_1$, \ldots, $Q_m$ using Lemma~\ref{lemma-tame} ensures
that every $T$-long spoke of a boundary cycle of $G'$ has length at least $d$.

Let $\psi'$ be the restriction of $\varphi_1\cup\ldots\cup \varphi_m$ to the boundary of $\Sigma'$.
By Corollary~\ref{cor-win0} applied to $G_{B_i}$, we have $\omega_{\psi'}(Q_i)=\omega_{\psi}(B_i)$ for $i\in\{1,\ldots,m\}$.
Moreover, $p(G,B_1,\ldots,B_m)=p(G',Q_1,\ldots,Q_m)$, since for $i\in\{1,\ldots,m\}$, we can choose the orientation of the faces between $B_i$ and $Q_i$
consistently with the orientation of $B_i$, so that these faces do not contribute to the graph $D$
from the definition of $p(G,B_1,\ldots, B_m)$.  Consequently, the winding number constraint for $\psi$ in $G$
implies the winding number constraint for $\psi'$~in~$G'$.  By Lemma~\ref{lemma-weak}, $\psi'$ extends to a $3$-coloring $\varphi'$ of $G'$.
Therefore, $\psi$ extends to a $3$-coloring $\varphi'\cup\bigcup_{i=1}^m\varphi_i$ of~$G$.
\end{proof}

\section{The algorithm}\label{sec-alg}

Note that the proof of Lemma~\ref{lem-disk} gives an algorithm to decide whether the precoloring of the boundary of a disk
extends to a $3$-coloring of a quadrangulation of the disk (and to find such a coloring if it exists), by reducing the problem to
finding the maximum number of edge-disjoint paths between prescribed vertices $s$ and $t$ (and by turning the resulting $Z_3$-flow
to a $3$-coloring).  This corresponds to a network flow problem, which can be solved in linear time using Ford-Fulkerson
algorithm when the degrees of $s$ and $t$ are bounded by a constant.  Hence, we have the following.

\begin{observation}\label{obs-mainalg-disk}
Theorem~\ref{thm-mainalg} is true if $\Sigma$ is the sphere or the disk.
\end{observation}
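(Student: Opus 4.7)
The plan is to handle the two surfaces separately, in each case rendering the existence proof constructive and bounding the running time using the fact that $k$ is fixed.

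When $\Sigma$ is the sphere there are no boundary vertices, so $\psi$ is empty. A quadrangulation of the sphere is bipartite (all faces having even length), so it is $2$-colorable, and a proper $2$-coloring can be found in linear time by breadth-first search from an arbitrary vertex. Outputting any such coloring settles this case.

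For the disk case I would directly turn the proof of Lemma~\ref{lem-disk} into an algorithm. First, compute $\delta_\psi(B)$ by walking around the boundary cycle; if its winding number is nonzero, output ``NO'' (this is forced by Corollary~\ref{cor-win0}). Otherwise, build the auxiliary graph $H$ defined in the proof: construct the dual $G^\star$ in the sphere obtained by patching the disk, then split the vertex corresponding to the patched outer face into $s$ (incident with edges dual to those of $S$) and $t$ (incident with edges dual to those of $T$). Since $G$ is a quadrangulation, $|V(G^\star)|$ and $|E(G^\star)|$ are both linear in $|V(G)|$, so $H$ is constructed in linear time, and $\deg_H(s)=|S|\le k$, $\deg_H(t)=|T|\le k$. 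Run Ford--Fulkerson on $H$ to find a maximum collection of edge-disjoint $st$-paths; since each augmenting path is found by one BFS in $O(|V(G)|)$ time and at most $|S|\le k$ augmentations occur, this takes $O(k\,|V(G)|)=O(|V(G)|)$ time. If fewer than $|S|$ edge-disjoint paths are found, the min-cut witnesses a spoke $P$ and base $Q$ with $|P|<|\delta_\psi(Q)|$, so by Lemma~\ref{lem-disk} we output ``NO''. Otherwise, we have the $|S|$ paths required in the proof; orient them from $s$ to $t$, orient each remaining edge-disjoint even-degree residual component as an Eulerian collection of cycles (each found in linear time), and thereby obtain a nowhere-zero $\mathbb{Z}_3$-flow in $G^\star$ in linear time. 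Finally, convert this flow to a $3$-coloring of $G$ by Tutte's equivalence: fix the color of an arbitrary vertex and propagate via BFS across primal edges using the flow value across each edge as the color difference; consistency is guaranteed because each facial cycle of $G$ has zero flow around it.

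The main obstacle is the last step, making Tutte's correspondence effective in linear time while also ensuring that the resulting coloring actually restricts to $\psi$ (rather than to a color permutation of it). Both are handled by choosing the orientations of the edges of $S^\star$ and $T^\star$ to match the signs prescribed by $\psi$, as is done in the proof of Lemma~\ref{lem-disk}; after that, the BFS propagation is a routine linear-time walk over the quadrangulation that assigns each vertex a color consistent with the flow, and the correctness follows directly from Lemma~\ref{lem-disk}.
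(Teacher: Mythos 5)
Your proposal is correct and takes essentially the same approach as the paper: the paper's own argument is a one-paragraph pointer observing that the proof of Lemma~\ref{lem-disk} reduces the problem to finding a maximum set of edge-disjoint $s$--$t$ paths in the (modified) dual, solvable in linear time by Ford--Fulkerson since $\deg(s),\deg(t)\le k$ is bounded, followed by turning the $Z_3$-flow into a coloring via Tutte's correspondence. You simply spell out the same pipeline in more detail (and add the trivial bipartite/BFS handling of the sphere, which the paper leaves implicit), so there is no substantive difference.
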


For quadrangulations of the cylinder, we proved a result similar to Corollary~\ref{cor-disk} in the previous paper of the series.

\begin{lemma}[{Dvo\v{r}\'ak et al.~\cite[Corollary~4.7]{trfree5}}]\label{lemma-mainalg-cyl1}
For all positive integers $d_1$ and $d_2$, there exists a linear-time algorithm as follows.
Let $G$ be a boundary-linked quadrangulation of the cylinder with boundary cycles $B_1$ and $B_2$
such that $|B_1|=d_1$, $|B_2|=d_2$ and the distance between $B_1$ and $B_2$ is at least $d_1+d_2$.
Let $\psi$ be a $3$-coloring of $B_1\cup B_2$ satisfying the winding number constraint.
Then the algorithm returns a $3$-coloring of $G$ that extends $\psi$.
\end{lemma}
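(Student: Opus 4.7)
My plan is to reduce to the disk-case algorithm of Observation~\ref{obs-mainalg-disk} by cutting the cylinder along a shortest path between its cuffs, in direct analogy with the non-constructive proof (Lemma~4.5 of \cite{trfree5}). First I would run a breadth-first search from $V(B_1)$ (of constant size $d_1$) to compute in linear time a shortest path $P$ from $B_1$ to $B_2$ in $G$; by the distance hypothesis, $|P|\ge d_1+d_2$. Then I would slit $G$ along $P$, duplicating the vertices of $P$, to produce a quadrangulation $G'$ of a disk $\Sigma'$ whose boundary walk $W$ has length $d_1+d_2+2|P|$ and traces $B_1$, one copy of $P$, then $B_2$ in reverse, and finally the other copy of $P$ in reverse. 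All of this takes linear time.

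Next I would extend $\psi$ to a $3$-coloring $\varphi$ of $P$. Since the two copies of $P$ occur in $W$ with opposite orientations, their contributions to $\delta(W)$ cancel regardless of $\varphi$, so $\omega(W)=\omega_\psi(B_1)+\omega_\psi(B_2)=0$ by the winding-number constraint assumed on $G$. Hence the winding-number hypothesis of Lemma~\ref{lem-disk} is automatic, and I am free to pick $\varphi$ with $|\delta_\varphi(P)|$ as small as possible---at most $3$ by parity and mod-$3$ considerations---via a suitable variant of Lemma~\ref{lemma-pathcol}, which applies because $|P|\ge d_1+d_2\ge 2$. Finally I would call the linear-time disk algorithm of Observation~\ref{obs-mainalg-disk} on $(G',\psi\cup\varphi)$; its output, pulled back to $G$, is a $3$-coloring of $G$ extending $\psi$.

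The main obstacle is verifying that condition $(\star)$ of Lemma~\ref{lem-disk} holds for $(G',\psi\cup\varphi)$, so that the disk algorithm in fact succeeds: for every spoke $P'$ of $G'$ with base $Q'$, one must show $|P'|\ge|\delta_{\psi\cup\varphi}(Q')|$. I would do a case analysis according to which of the four arcs of $W$ the endpoints of $P'$ lie on. When $P'$ together with a sub-arc of $W$ corresponds in $G$ to a non-contractible cycle (for instance when $P'$ has its endpoints on opposite copies of $P$, or both on the same cuff but separating the cuffs), boundary-linkedness supplies the bound, since every such cycle has length strictly greater than $\max(d_1,d_2)$. When $P'$ joins $B_1$ to $B_2$, the shortest-path property of $P$ gives $|P'|\ge|P|$, dominating any displacement along $W$. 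The contribution of portions of $Q'$ lying on the copies of $P$ to $|\delta_{\psi\cup\varphi}(Q')|$ is bounded by $|\delta_\varphi(P)|\le 3$, a constant absorbed by the lower bound $|P|\ge d_1+d_2$. Assembling these bounds across the cases verifies $(\star)$ and completes the reduction.
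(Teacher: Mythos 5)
The paper does not actually prove this lemma; it is cited verbatim from the earlier paper in the series (Corollary~4.6 of~\cite{trfree5}), so there is no in-paper argument to compare against. Your strategy of cutting along a shortest $B_1$--$B_2$ path $P$ and invoking Lemma~\ref{lem-disk} is a natural one, but the verification of condition $(\star)$ has a genuine gap, and the gap lies precisely in the choice of $\varphi$ on~$P$.

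The problem is the case of a spoke $P'$ of the disk $G'$ whose two endpoints $u,v$ lie on the two different copies of $P$ in the boundary walk $W$ --- a case your case analysis does not treat. Write $\theta(u),\theta(v)$ at positions $a$ and $b$ along $P$. The two bases of $P'$ in $W$ each traverse one full cuff; a computation gives $\delta_{\psi\cup\varphi}(Q')=\delta_\varphi(P[a,b])-\delta_\psi(B_1)$ (in suitable orientations). Here $|\delta_\psi(B_1)|=3|\omega_\psi(B_1)|$ can be as large as $d_1$. Meanwhile, the only lower bounds on $|P'|$ available are $|P'|\ge|b-a|$ (because subpaths of a shortest path are shortest) and $|P'|+|b-a|>\max(d_1,d_2)$ (because $P'\cup P[a,b]$ is non-contractible, by boundary-linkedness); together these give only $|P'|\gtrsim\max(d_1,d_2)/2$ in the worst case. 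If you pick $\varphi$ to minimize $|\delta_\varphi(P)|$ --- or, more generally, to be nearly alternating so that $\delta_\varphi(P[a,b])\approx 0$ --- then $|\delta_{\psi\cup\varphi}(Q')|\approx|\delta_\psi(B_1)|$ can approach $\max(d_1,d_2)$, and $(\star)$ fails even though the original $\psi$ does extend to $G$. In other words, the correct restriction of an extension to $P$ is in general \emph{not} the winding-number-minimizing one; in a ``maximally twisted'' boundary-linked cylinder with $|\delta_\psi(B_1)|=d_1$, the right $\varphi$ is essentially monotone along $P$ rather than alternating, and its $\delta_\varphi(P)$ is large. Your proposal gives no mechanism for discovering the correct $\varphi$ before the disk algorithm is called, so the reduction as stated may falsely report ``no extension.'' (A secondary issue: even for cross-spokes from $B_1$ to $B_2$ the claim that $|P|\ge d_1+d_2$ ``absorbs'' the constant from $\delta_\varphi(P)$ needs the parity observation $|P'|\equiv|Q'|\pmod 2$ to close an additive gap; as written, the arithmetic does not obviously work out.)
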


Next, we need an algorithm to split a graph embedded in a cylinder to smaller subgraphs.
For a graph $G$ and a set $X$ of its edges, let $G/X$ denote the graph obtained by contracting all edges in $X$.

\begin{lemma}\label{lemma-splitcyl}
Let $d$ be a positive integer.  There exists a linear-time algorithm that,
given a graph $G$ that is $2$-cell embedded in the cylinder $\Sigma$ with boundary cycles $B_1$ and $B_2$
of length at most $d$, returns a sequence $C_0$, $C_1$, \ldots, $C_m$ of non-contractible cycles of $G$ of length at most $d$
such that
\begin{itemize}
\item $C_0=B_1$ and $C_m=B_2$,
\item for $0\le i< m$, the cycle $C_i$ is contained in the part of $\Sigma$ between $B_1$ and $C_{i+1}$, and
\item either $C_i$ intersects $C_{i+1}$, or the subcylinder of $\Sigma$ between $C_i$ and $C_{i+1}$ contains no
non-contractible cycle of length at most $d$ distinct from $C_i$ and $C_{i+1}$.
\end{itemize}
\end{lemma}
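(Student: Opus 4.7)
The plan is to construct the sequence inductively by sweeping from $B_1$ toward $B_2$. I set $C_0 := B_1$ and, given $C_0,\ldots,C_i$ with $C_i \neq B_2$, I let $\Sigma_i$ be the closed subcylinder of $\Sigma$ bounded by $C_i$ and $B_2$, and set $G_i := G \cap \Sigma_i$. Define $C_{i+1}$ to be any non-contractible cycle $C$ of $G_i$, distinct from $C_i$, with $|C| \le d$, such that the subcylinder of $\Sigma_i$ bounded by $C_i$ and $C$ is inclusion-minimal among all such choices; if no such $C$ exists, set $C_{i+1} := B_2$. The three required properties then follow from the definition: $C_0=B_1$ and $C_m=B_2$ by construction; each $C_i$ lies in the part of $\Sigma$ between $B_1$ and $C_{i+1}$ since the search takes place in $\Sigma_i$; and if $C_i$ and $C_{i+1}$ are disjoint while some short non-contractible cycle $C'\neq C_i,C_{i+1}$ lay in the subcylinder strictly between them, then $\Sigma_i^{C'}$ would be a strict sub-subcylinder of $\Sigma_i^{C_{i+1}}$, contradicting the minimal choice of $C_{i+1}$.

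For the algorithmic implementation, I first compute in $O(n)$ time a simple path $R$ in $G$ from $B_1$ to $B_2$. Cutting $\Sigma$ along $R$ unfolds it into a topological disk, and the non-contractibility of any cycle in $G$ is detected by whether its signed count of crossings with $R$ is nonzero — a cheap local indicator. In iteration $i$, I perform a BFS in $G_i$ starting from the vertices of $C_i$, recording for each visited vertex $v$ both its BFS distance to $C_i$ and the signed $R$-crossing count of the stored shortest path. Whenever a scanned edge $uv$ closes a candidate cycle of length at most $d$ with nonzero total winding, a short non-contractible cycle has been found; I take $C_{i+1}$ to be the first such cycle the BFS discovers (with ties broken in favour of inclusion-minimality of $\Sigma_i^{C_{i+1}}$) and terminate.

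Linear total time follows from the observation that the BFS in iteration $i$ only explores the strip $\Sigma_i^{C_{i+1}}$ together with a bounded buffer needed to certify that no short non-contractible cycle hides just beyond $C_{i+1}$; since $d$ is a fixed constant and the strips $\Sigma_i^{C_{i+1}}$ are pairwise interior-disjoint, we have $\sum_i |E(G)\cap \Sigma_i^{C_{i+1}}| = O(|E(G)|) = O(n)$. The preprocessing (computing $R$, setting up the winding labels, initial BFS from $B_1$) is likewise $O(n)$, so the total running time is linear.

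The hard part is to justify that the BFS can be stopped early enough for the amortized linear bound, while still certifying inclusion-minimality. The subtlety is that a short non-contractible cycle may have vertices at widely varying BFS distances from $C_i$, for instance if it sits in a narrow ``neck'' of the cylinder, so the BFS may encounter candidate cycles in an order that does not match the subcylinder-containment order. Resolving this requires that the BFS processes vertices in order of maximum depth of the candidate cycles they complete, and a careful amortization showing that the buffer work in each iteration can be charged to the strip $\Sigma_i^{C_{i+1}}$ (whose edges are not revisited in later iterations). Once this bookkeeping is set up, the remaining verifications (handling $C_i\cap C_{i+1}\neq\emptyset$ and the terminating case $C_{i+1}=B_2$) are routine.
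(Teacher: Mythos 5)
Your high-level combinatorial plan---a greedy sweep from $B_1$ toward $B_2$ that at each step picks an inclusion-minimal short non-contractible cycle in the remaining strip---is in the same spirit as the paper's, and your argument that inclusion-minimality forces the third bullet to hold is fine. The difficulty is entirely in the algorithmic realization, and the gap you acknowledge at the end is not a bookkeeping detail but the heart of the matter. BFS from $C_i$ does not detect short non-contractible cycles: when a non-tree edge $uv$ is scanned, the only cycle the BFS ``closes'' is the one through $uv$ and the stored shortest paths to $u$ and $v$ (or through a piece of $C_i$), whose length is roughly $\mathrm{dist}(u,C_i)+\mathrm{dist}(v,C_i)+1$. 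A non-contractible $4$-cycle sitting at BFS-distance $100$ from $C_i$ is invisible to this test---every back edge on it closes a BFS cycle of length about $200$, not $\le d$. Your signed $R$-crossing count correctly classifies a cycle once in hand, but the problem is finding the short cycle, not classifying it. This is why the paper works in the dual graph $G^\star$ and computes a maximum $s^\star$--$t^\star$ flow (after contracting the duals of the boundary-incident edges): by max-flow/min-cut duality the flow value equals the length of the shortest non-contractible cycle avoiding the boundary, and the min cut nearest to $s^\star$ (the residual-reachable set) is exactly the inclusion-minimal cycle $Q_{i+1}$, so there is no mismatch between search order and cylinder order to repair.

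The running-time analysis has the same gap. The paper inherits the flow from one step to the next, so finding $Q_{i+1}$ touches only the newly enclosed edges between $Q_i$ and $Q_{i+1}$; on top of this a nested induction on $k$ (algorithm $A_k$ assumes every non-contractible cycle shorter than $k$ shares an edge with a boundary cycle, and recurses on strips via $A_{k+1}$) handles consecutive cycles that overlap and gives an $O(d\cdot|E(G)|)$ bound. Your amortization requires the BFS in iteration $i$ to stay inside the strip between $C_i$ and $C_{i+1}$ plus a bounded buffer, but a BFS from $C_i$ explores a metric ball, not a geometric strip: if $C_{i+1}$ is far from $C_i$, the ball that must be swept before $C_{i+1}$ is even discovered may be much larger than the strip, and that work cannot be charged to pairwise disjoint strips. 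You flag this yourself (``processing vertices in order of maximum depth,'' ``careful amortization''), but no such scheme is supplied, and I do not see how to repair the BFS primitive without effectively re-deriving the dual-flow argument. As written, the proposal establishes neither the correctness of the search step nor the linear running time.
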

\begin{proof}
For $2\le k\le d+1$, we are going to construct an algorithm $A_k$ with the same specification as in the statement
of the lemma, under the additional assumption that every non-contractible cycle of length less than $k$ shares
an edge with one of the boundary cycles.  Note that for $k=2$, the assumption is void, and thus this will give
a proof of Lemma~\ref{lemma-splitcyl}.

We proceed by induction on decreasing $k$.  First,  let us assume that $k\le d$ and that the algorithm $A_{k+1}$
exists.  Let $\hat{\Sigma}$ be the sphere obtained from $\Sigma$ by patching the holes, and let $s$ and $t$ be the faces of $G$
in its embedding in $\hat{\Sigma}$ bounded by $B_1$ and $B_2$.
Let $G^\star$ be the dual of $G$ in its embedding in $\hat{\Sigma}$, and let $s^\star$ and $t^\star$ be the vertices of $G^\star$ dual to $s$ and $t$.
Let $F_0=E(B_1)\cup E(B_2)$ and let $F_0^\star$ be the set of the edges dual to those in $F_0$.
Consider a maximum flow from $s^\star$ to $t^\star$ in $G^\star/F_0^\star$ (where all edges have capacity $1$).

The size of the flow is equal to the length of the shortest non-contractible cycle in $G-F_0$.
By the assumption that every non-contractible cycle of length less than $k$ shares
an edge with one of the boundary cycles, the flow has size at least $k$.
If the size of the flow is at least $k+1$, then the claim follows by applying the algorithm $A_{k+1}$.
Therefore, assume that the maximum flow has size exactly $k$.  Then, there exists a unique non-contractible $k$-cycle $Q_1$ in $G-F_0$
which is nearest to $s$, corresponding to the cut of size $k$ in $G^\star/F_0^\star$ bounding the set of vertices that can be reached from
$s^\star$ by augmenting paths.  Let $F_1$ consist of $F_0$ and all edges of $G-F_0$ drawn in the closed disk in $\hat{\Sigma}$ bounded by $Q_1$ that contains $s$.
Similarly, we find the non-contractible $k$-cycle $Q_2$ in $G-F_1$ which is nearest to $s^\star$, and so on, until no such $k$-cycle exists.
Hence, we obtain a sequence of pairwise edge-disjoint non-contractible cycles $Q_0$, $Q_1$, \ldots, $Q_m$, where $Q_0=B_1$ and $Q_m=B_2$ are the boundary cycles of $G$,
such that for $0\le i\le m-1$, if $Q_i$ and $Q_{i+1}$ are vertex-disjoint, then every non-contractible $k$-cycle of $G$ drawn in the cylinder between $Q_i$ and $Q_{i+1}$ shares an edge with $Q_i\cup Q_{i+1}$.
Note that we can inherit the flow in $G^\star/F_i^\star$ from $G^\star/F^\star_{i-1}$, and thus in order to find the cycle $Q_{i+1}$, the algorithm
visits only the edges whose dual belongs to $F_{i+1}\setminus F_i$, for $0\le i\le m-1$.  Consequently we can find this sequence of cycles in linear time.

For $0\le i\le m-1$, if $Q_i$ and $Q_{i+1}$ are not vertex-disjoint, then let $S_i$ be the sequence consisting only of $Q_i$.
If $Q_i$ and $Q_{i+1}$ are vertex-disjoint, then let $S_i$ be the sequence obtained by applying algorithm $A_{k+1}$ on
the subgraph of $G$ drawn between $Q_i$ and $Q_{i+1}$ (inclusive) except for its last element $Q_{i+1}$.
We return the concatenation of the sequences $S_0$, $S_1$, \ldots, $S_{m-1}$, and the singleton~$B_2$.

It remains to consider the case that $k=d+1$, and thus every non-contractible cycle in $G$ of length at most $d$ shares an edge
with $B_1$ or $B_2$.  Using the maximum flow algorithm in the dual of $G-E(B_1)$, we find a non-contractible cycle $B'_2$ in $G$ of length at most $d$
closest to and edge-disjoint from $B_1$; this cycle necessarily intersects $B_2$.  If $B'_2$ also intersects $B_1$, we return the sequence $B_1$, $B'_2$, $B_2$.
Hence, suppose that $B'_2$ is disjoint from $B_1$, and let $G_1$ be the subgraph of $G$ between $B_1$ and $B'_2$.
Let $S_2$ be the sequence $B'_2, B_2$ if $B'_2\neq B_2$ and the sequence consisting only of $B_2$ otherwise.
Note that every non-contractible cycle in $G_1$ of length at most $d$ distinct from $B'_2$ shares an edge with $B_1$.

Using the maximum flow algorithm in the dual of $G_1-E(B'_2)$, we find a non-contractible cycle $B'_1$ in $G_1$ of length at most $d$
closest to and edge-disjoint from $B'_2$; this cycle necessarily intersects $B_1$.  If $B'_1$ also intersects $B'_2$,
we return the concatenation of the sequence $B_1, B'_1$ and $S_2$.  Hence, suppose that $B'_1$ is disjoint from $B'_2$,
and let $G_2$ be the subgraph of $G$ between $B'_1$ and $B'_2$.  
Let $S_1$ be the sequence $B_1, B'_1$ if $B'_1\neq B_1$ and the sequence consisting only of $B_1$ otherwise.

By the choice of $B'_1$ and $B'_2$, every non-contractible cycle in $G_2$ of length at most $d$ distinct from $B'_1$ and $B'_2$
shares an edge with both $B'_1$ and $B'_2$.
Finally, for all $e_1\in E(B'_1)$ and $e_2\in E(B'_2)$, we use the maximum flow algorithm in the dual of $G_2-\{e_1,e_2\}$
to determine whether there exists a non-contractible cycle $Q$ of length at most $d$.  If so, we return the
sequence consisting of $S_1$, $Q$, and $S_2$.  If not, then $G_2$ does not contain any non-contractible cycle
of length at most $d$ distinct from $B_1$ and $B_2$, and we return the concatenation of the sequences $S_1$ and $S_2$.
\end{proof}

Using these tools, it is easy to deal with the cylinder case of Theorem~\ref{thm-mainalg}.

\begin{lemma}\label{lemma-algcyl}
Let $d$ be a positive integer. There exists a linear-time algorithm that,
given a quadrangulation $G$ of the cylinder $\Sigma$ with boundary cycles of length at most $d$
and their precoloring $\psi$, decides whether $\psi$ extends to a $3$-coloring of~$G$.
If such a $3$-coloring extending $\psi$ exists, the algorithm outputs one.
\end{lemma}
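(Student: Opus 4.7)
The plan is to decompose $G$ using Lemma~\ref{lemma-splitcyl} and then run dynamic programming over $3$-colorings of the separating cycles, with a constant-time (per unit size) local extendability oracle on each piece.

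First I would apply Lemma~\ref{lemma-splitcyl} to produce, in linear time, cycles $C_0=B_1, C_1,\ldots,C_m=B_2$ of length at most $d$ each. For $0\le i<m$, let $G_i$ be the subgraph of $G$ drawn in the closed region of $\Sigma$ between $C_i$ and $C_{i+1}$ (including both). Then $\bigcup_i G_i=G$ and consecutive $G_i$'s share only the corresponding cycle. The core subroutine is an extendability oracle that, given $3$-colorings $\varphi_i,\varphi_{i+1}$ of $C_i,C_{i+1}$, decides in time $3^{O(d)}|V(G_i)|$ whether $\varphi_i\cup\varphi_{i+1}$ extends to a $3$-coloring of $G_i$, and if so produces one. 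I split into three cases. \emph{Case A} ($C_i$ and $C_{i+1}$ share a vertex): the region between them in $\Sigma$ decomposes, after splitting at the shared vertices, into open disks whose boundary lengths sum to at most $|C_i|+|C_{i+1}|\le 2d$, so $G_i$ is a disjoint union of disk quadrangulations of bounded boundary length, and we apply Observation~\ref{obs-mainalg-disk} to each. \emph{Case B} ($C_i,C_{i+1}$ vertex-disjoint, distance less than $2d$ in $G_i$): take a shortest $C_i$-to-$C_{i+1}$ path $P$ (length less than $2d$) and cut along $P$ to turn $G_i$ into a quadrangulation of a disk whose boundary has length at most $|C_i|+|C_{i+1}|+2|P|<6d$; for each of the at most $3^{2d}$ $3$-colorings of the internal vertices of $P$ that are compatible with $\varphi_i$ and $\varphi_{i+1}$, apply Observation~\ref{obs-mainalg-disk}. \emph{Case C} ($C_i,C_{i+1}$ vertex-disjoint, distance at least $2d$): by the guarantee of Lemma~\ref{lemma-splitcyl}, every non-contractible cycle in $G_i$ other than $C_i,C_{i+1}$ has length at least $d+1>\max(|C_i|,|C_{i+1}|)$, so $G_i$ is a boundary-linked cylinder quadrangulation satisfying the distance hypothesis of Lemma~\ref{lemma-mainalg-cyl1}; apply that lemma, first using Corollary~\ref{cor-win0} to reject $\varphi_i\cup\varphi_{i+1}$ if it violates the winding-number constraint.

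With the oracle in hand, the dynamic program maintains, for $0\le i\le m$, the set $\mathcal{F}_i$ of $3$-colorings $\varphi$ of $C_i$ such that $\psi|_{B_1}\cup\varphi$ extends to a $3$-coloring of $G_0\cup\cdots\cup G_{i-1}$. Initialize $\mathcal{F}_0=\{\psi|_{B_1}\}$; compute $\mathcal{F}_{i+1}$ from $\mathcal{F}_i$ by calling the oracle on each of the at most $3^{2d}$ pairs $(\varphi_i,\varphi_{i+1})\in\mathcal{F}_i\times\{1,2,3\}^{V(C_{i+1})}$ and recording a witness partial coloring of $G_i$ to enable traceback. The answer is affirmative iff $\psi|_{B_2}\in\mathcal{F}_m$, in which case the recorded witnesses paste together along the $C_i$'s to give a $3$-coloring of $G$ extending $\psi$. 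Correctness is clear: any valid $3$-coloring of $G$ restricts to entries of the $\mathcal{F}_i$'s, and conversely witnesses on the $G_i$'s agree on the separating cycles by construction. The total running time is $\sum_{i}3^{O(d)}|V(G_i)|=O(|V(G)|)$ with the implicit constant depending on $d$.

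The main obstacle I anticipate is formalizing Case~A: when $C_i$ and $C_{i+1}$ share several vertices in complicated configurations one must verify that the region between them in $\Sigma$ really does decompose (after splitting at the shared vertices) into disks whose quadrangulation boundaries are consistently precolored by $\varphi_i\cup\varphi_{i+1}$. This is pure topological bookkeeping in the cylinder, but it must be set up precisely before the disk algorithm can be invoked on each component.
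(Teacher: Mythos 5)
Your proposal matches the paper's proof essentially step for step: the same decomposition via Lemma~\ref{lemma-splitcyl}, the same dynamic program over $3$-colorings of the separating cycles, and the same dichotomy between nearby consecutive cycles (cut along a shortest connecting path and invoke Observation~\ref{obs-mainalg-disk}) and far-apart ones (invoke Lemma~\ref{lemma-mainalg-cyl1} after checking the winding-number constraint). The only cosmetic differences are that the paper folds your Case~A into Case~B by letting the connecting path $P$ degenerate to a single vertex when $C_{i}$ and $C_{i+1}$ intersect, and it keys the DP table on colorings of $C_1\cup C_i$ rather than just $C_i$.
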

\begin{proof}
Let $C_1$, \ldots, $C_m$ be the sequence of cycles obtained by applying Lemma~\ref{lemma-splitcyl}.
For $1\le i<j\le m$, let $G_{i,j}$ denote the subgraph of $G$ drawn between $C_i$ and~$C_j$.

For $i=1,\ldots, m$, let $\Psi_i$ denote the set of all $3$-colorings of $C_1\cup C_i$ that extends to a $3$-coloring
of $G_{1,i}$.  We determine the sets $\Psi_1$, \ldots, $\Psi_m$ by dynamic programming,
and test whether $\psi$ belongs to $\Psi_m$.

Clearly, $\Psi_1$ consists of all $3$-colorings of the cycle $C_1$.
Suppose that $i>1$ and that we already determined the set $\Psi_{i-1}$.  To compute $\Psi_i$, it suffices to determine
the set $\Psi'_i$ of all $3$-colorings of $C_{i-1}\cup C_i$ that extend to the subgraph $G_{i-1,i}$.
If the distance between $C_{i-1}$ and $C_i$ is less than $|C_1|+|C_2|$, then let $P$ be a shortest path between $C_{i-1}$ and $C_i$.
By using Observation~\ref{obs-mainalg-disk}, we can determine all $3$-colorings of $C_{i-1}\cup P\cup C_i$ that extend to
a $3$-coloring of $G_{i-1,i}$, and thus also the set $\Psi'_i$.  If the distance between $C_{i-1}$ and $C_i$ is at least $|C_1|+|C_2|$,
then $\Psi'_i$ consist of all $3$-colorings of $C_{i-1}$ and $C_i$ that satisfy the winding number constraint, by Lemma~\ref{lemma-mainalg-cyl1}.

Note that for each element $\psi'$ of $\Psi_i$, we can also keep track of a $3$-coloring of $G_{1,i}$ whose restriction to $C_1\cup C_i$
is equal to $\psi'$.  Hence, we can also return the $3$-coloring of $G$ whose restriction to $B_1\cup B_2$ is equal to $\psi$,
if such a coloring exists.
\end{proof}

The case of a general surface $\Sigma$ is somewhat more involved.
We need to specify how a graph $G$ embedded in $\Sigma$ is represented.  
We use a variant of the polygonal representation; informally, we cut $\Sigma$ along simple non-separating closed
curves and along simple curves between cuffs (without crossing the edges of the drawing of $G$, i.e., passing through
vertices or following the edges of $G$) until we obtain a graph $G$ drawn in a disk $\Delta$.  The surface $\Sigma$ and the graph $G$
can be recovered by gluing parts of the boundary of $\Delta$ back together.

Let us now formally describe the representation; we use a drawing of another graph $H$ to describe the curves along which we cut $\Sigma$.
Let $g$ be the Euler genus of $\Sigma$ and $c$ the number of cuffs of $\Sigma$.
Let $H$ be a graph drawn in $\Sigma$ such that
\begin{itemize}
\item the boundary of every cuff traces a cycle in $H$,
\item for every edge $e\in E(H)$, the curve in $\Sigma$ representing $e$ in the drawing of $H$ is either equal to
a curve representing an edge of $G$, or intersects the drawing of $G$ only in the points representing vertices of $G$,
\item $H$ has exactly one face and this face is homeomorphic to an open disk $\Lambda$, and
\item $H$ has at most $2(g+c-1)$ vertices of degree at least three.
\end{itemize}
The graph $G$ embedded in $\Sigma$ is represented by a graph $G'$ drawn in a closed disk $\Delta$, such that there exists
a continuous surjection $\theta:\Delta\to\Sigma$ satisfying
\begin{itemize}
\item $G'=\theta^{-1}(G)$,
\item the restriction of $\theta$ to the interior of $\Delta$ is a homeomorphism to $\Lambda$, and
\item $\theta$ maps the boundary of $\Delta$ to the boundary walk of the face $\Lambda$ of $H$.
\end{itemize}
Note that the boundary of $\Delta$ contains pairwise internally disjoint closed intervals $A_1$, $B_1$, $A_2$, $B_2$, \ldots, $A_p$, $B_p$ for some $p\le 3(g+c)+1$
such that the restriction of $\theta$ to each of them is injective, $\theta(A_i)=\theta(B_i)$ for $1\le i\le p$,
and $\bigcup_{i=1}^p \theta(A_i)$ is the subgraph of $H$ consisting of edges not contained in any of the boundary cycles of $G$.
Hence, the surface $\Sigma$ is obtained from the disk $\Delta$ by identifying $A_i$ with $B_i$ for $1\le i\le p$, in the
direction prescribed by $\theta$.  We call this representation of an embedded graph a \emph{normal representation}.
Let us remark that a normal representation can be obtained from any other common representation of an embedded graph in linear time.

In~\cite{dvorak2013testing}, we designed a dynamic data structucture for representing first-order properties in sparse graphs.
Here, the following special case will be important.
\begin{lemma}[{Dvo\v{r}\'ak et al.~\cite[Theorem~5.2]{dvorak2013testing}}]\label{lemma-data}
For every $d\ge 0$, there exists a data structure representing a planar graph $G$ and a weight function $E(G)\to \{0,1,\infty\}$, supporting the following operations in constant time
(depending only on $d$):
\begin{itemize}
\item Removal of an edge or an isolated vertex.
\item Changing the weight of any edge.
\item For any vertices $u,v\in V(G)$ and integers $t,w\le d$, deciding whether there exists a path between $u$ and $v$ with
at most $t$ edges and with total weight at most $w$, and finding such a path if it exists.
\end{itemize}
The data structure can be initialized in $O(|V(G)|)$ time.
\end{lemma}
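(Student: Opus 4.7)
The plan is to exploit the bounded expansion of planar graphs through low tree-depth colorings of Ne{\v s}et{\v r}il and Ossona de Mendez. Specifically, for $p=d+1$, a planar graph admits a vertex coloring with a number of colors depending only on $p$ (hence only on $d$) such that any $p$ of the color classes together induce a subgraph of tree-depth at most $p$. Since the operations supported by the data structure only remove edges and vertices (never add them), a single $p$-centered coloring computed at initialization remains valid throughout the life of the data structure; such a coloring can be computed in linear time for planar graphs.

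With the coloring in hand, I would reduce every query $(u,v,t,w)$ with $t,w\le d$ to bounded tree-depth subproblems. If a $u$--$v$ path of length at most $t$ exists, then it uses at most $t+1\le d+1=p$ color classes, so it survives in the subgraph $G[S]$ induced by some set $S$ of at most $p$ color classes containing the colors of $u$ and $v$. There are only constantly many such sets $S$, and each $G[S]$ has tree-depth at most $p$. For each relevant $S$, I would maintain a tree-depth decomposition of $G[S]$, together with, for each vertex $x\in V(G[S])$ and each pair $(t',w')$ with $t',w'\le d$, a table recording whether $x$ can reach each ancestor in the decomposition by a path of length at most $t'$ and weight at most $w'$. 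Since each component of $G[S]$ has at most $2^p$ vertices on any root-to-leaf branch, these tables are of constant size per vertex, and the query is answered by combining tables for $u$, $v$ and their common decomposition ancestor by standard dynamic programming on tree-depth decompositions.

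The update operations are handled locally: each edge removal or weight change affects only the constantly many sets $S$ containing both endpoints, and within each such $G[S]$ only the ancestor tables of vertices on the bounded-size tree-depth branch through the modified edge need to be recomputed. Vertex removals are similar. Because tree-depth does not increase under edge or vertex deletion, the original tree-depth decompositions remain valid and need not be rebuilt; they may become suboptimal, but correctness is preserved.

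The main obstacle is the \emph{initial} construction of the tree-depth decomposition and the tables: one must verify that for planar graphs the $p$-centered coloring, the tree-depth decompositions of all $\binom{C}{p}$ induced subgraphs, and all initial tables can be computed in total linear time. This requires careful bookkeeping, exploiting that each edge of $G$ lies in only $\binom{C-2}{p-2}=O(1)$ of the subgraphs $G[S]$, and that the tree-depth decomposition of a graph of tree-depth at most $p$ can be computed in linear time in the size of the graph. Once this initialization is achieved within the stated complexity, the constant-time guarantees for the three supported operations follow from the boundedness of every quantity involved in terms of $d$ alone.
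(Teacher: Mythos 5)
The lemma in question is imported from Dvo\v{r}\'ak, Kr\'al' and Thomas, \emph{Testing first-order properties for subclasses of sparse graphs} (Theorem~5.2), so the present manuscript contains no proof of it to compare against; I evaluate your proposal against the known approach of that reference.

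Your high-level plan matches the reference. Using a low tree-depth (that is, $p$-centered) coloring of the planar graph with $p=d+1$, observing that such a coloring is computable in linear time for bounded-expansion classes and remains valid under edge and vertex deletions, and reducing each query to the constantly many induced subgraphs $G[S]$ of tree-depth at most $p$ that could contain a witnessing short path --- all of this is correct and is indeed the framework used in the cited paper. The query side (combining bounded-size ancestor tables for $u$ and $v$) is also sound in outline.

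The gap is in the update analysis. You claim that after deleting an edge $uv$, only ``the ancestor tables of vertices on the bounded-size tree-depth branch through the modified edge need to be recomputed.'' In a tree-depth decomposition there is no single bounded-size branch through an edge: every root-to-leaf path through the deeper endpoint of the edge uses it, and these paths together cover the entire subtree below that endpoint, which has unbounded size. Concretely, take $G[S]$ with vertices $c,a,b,x_1,\ldots,x_n$ and edges $ca$, $cb$, $ab$, and $ax_i$ for all $i$; a tree-depth-$3$ decomposition has root $c$, child $a$, and $a$'s children $b,x_1,\ldots,x_n$. Each $x_i$ reaches its ancestor $c$ by the length-$2$ path $x_i a c$, but after deleting $ac$ the only remaining walk is $x_i a b c$ of length $3$, so every one of the $n$ ancestor tables of the $x_i$ changes, and none of the $x_i$ is an endpoint of the deleted edge. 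Making updates run in (amortized) constant time despite this --- by organizing the stored information so that such cascading changes are paid for lazily or encoded so they need not be propagated per vertex --- is the actual technical content of the cited Theorem~5.2, and your proposal does not supply that missing idea. As written, the claimed constant-time update does not follow from the ingredients you list.
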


We use it to design a similar data structure for representing embedded graphs (inspired by the algorithm of Cabello and Mohar~\cite{cabello-sepcurv}).

\begin{lemma}\label{lemma-dataemb}
For any integer $d\ge 0$ and every surface $\Sigma$, there exists a data structure as follows.
Let $G_0\cup F$ be a graph embedded in $\Sigma$ such that $F$ is a star forest.
The data structure represents a graph $G$ obtained from $G_0$ by contracting some edges of $F$
and by removing vertices and edges.  The data structure supports the following operations in amortized constant time
(depending only on $d$ and $\Sigma$):
\begin{enumerate}[(a)]
\item Removal of an edge or an isolated vertex.
\item Contraction of an edge of $F$.
\item For any vertex $v\in V(G)$, deciding whether there exists a closed walk $W$ of length at most $d$ with $v\in V(W)$ such that
$W$ is not null-homotopic in $\Sigma$ even after patching a single cuff with a disk, and finding such a walk if that is the case.
\item For any vertex $v\in V(G)$ and any set $D$ of cuffs of $\Sigma$, letting $\hat{\Sigma}$ be the surface obtained from $\Sigma$ by
patching all the cuffs in $D$ and letting $\Lambda\subseteq\hat{\Sigma}$ be an open disk containing all the patches,
deciding whether there exists a closed walk $W$ in $G$ of length at most $d$ such that $W$ contains $v$ and is homotopically
equivalent (in $\Sigma$) to the boundary of $\Lambda$, and finding such a walk if that is the case.
\end{enumerate}
Given a normal representation of $G$, the data structure can be initialized in $O(|V(G)|)$ time.
\end{lemma}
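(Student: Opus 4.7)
The plan is to maintain, throughout the lifetime of the data structure, the normal representation $(G',\Delta,\theta,A_1\leftrightarrow B_1,\ldots,A_p\leftrightarrow B_p)$ of $G_0\cup F$, where $p\le 3(g(\Sigma)+c(\Sigma))+1$ is bounded by a constant depending only on $\Sigma$, and to run the data structure of Lemma~\ref{lemma-data} on the planar graph $G'$ (drawn in the disk $\Delta$) with every edge initially having weight $1$. By adjusting the input normal representation during initialization if needed, we may assume that no edge of the star forest $F$ lies on the cut graph $H$, so every edge of $F$ has a unique preimage in $G'$. The updates (a) and (b) are then implemented directly: removal of an edge or an isolated vertex of $G$ is a removal in $G'$, and contraction of an edge $e\in E(F)$ is implemented by setting the weight of its copy in $G'$ to $0$. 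Both updates cost amortized constant time by Lemma~\ref{lemma-data}.

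For queries (c) and (d), my starting point is that any closed walk $W$ in $G$ of length at most $d$ lifts to $\Delta$ as a concatenation of at most $d+1$ path segments $P_0,\ldots,P_k$ in $G'$ joined by at most $d$ ``jumps'' from a vertex on some arc $A_{i_j}$ to its identified partner on $B_{i_j}$, in a direction $\epsilon_j\in\{+,-\}$. Because the interior of $\Delta$ is simply connected, the free homotopy class of $W$ in $\Sigma$, and in every $\Sigma+\hat{C}$ obtained by patching a cuff, depends only on the combinatorial \emph{signature} $((i_1,\epsilon_1),\ldots,(i_k,\epsilon_k))$. The number of signatures of length at most $d$ is bounded by $(2p)^{d+1}$, a constant depending only on $d$ and $\Sigma$, so I precompute for each signature its free homotopy classes in $\Sigma$ and in every $\Sigma+\hat{C}$, and mark those admissible for (c) (non-null after patching any single cuff) and those admissible for (d) (in the class of the boundary of $\Lambda$). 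It remains, for each admissible signature, to decide whether it is realized by some walk in $G$ of length at most $d$ containing $v$.

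For a fixed admissible signature, I would enumerate over the $\deg_H(v)$ preimages of $v$ for the start and end of the lift (a constant number, since $H$ has only a bounded number of vertices of degree at least three and all other vertices of $H$ have degree $2$, forcing $\deg_H(v)$ to be bounded in terms of $\Sigma$) and over the bounded number of ordered partitions of the length budget $d$ among the $k+1$ segments. For each such configuration, the realization problem becomes: find $k+1$ paths in $G'$ of prescribed length bounds whose endpoints are compatible with the jumps. To avoid enumerating the possibly linearly many vertices of $G'$ on each arc, I would augment $G'$ with a constant number of auxiliary vertices placed in a thin strip exterior to $\Delta$, one per arc, each joined by weight-$0$ edges to the vertices of $G'$ on that arc; this keeps the underlying graph planar and reduces the question ``is there a path of length at most $\ell$ from $u$ to some vertex of $A_i$?'' to a single call to Lemma~\ref{lemma-data}.

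The principal obstacle I anticipate is the coupling between consecutive segments: the vertex on $A_{i_j}$ reached by $P_{j-1}$ determines the starting vertex of $P_j$ on $B_{i_j}$ via the identification, so the path returned by a single call to Lemma~\ref{lemma-data} might fail to extend to a full realization even when one exists. I plan to resolve this by iteratively forbidding already-tried waypoints, temporarily raising their weight (and that of their identified partners on the paired arcs) to $\infty$ and re-querying Lemma~\ref{lemma-data}; since any realization uses at most $d+1$ distinct vertices of $G'$ in total, a bounded number of retries per segment suffices, after which we reset the temporarily modified weights. Combined with the constant bounds on signatures, preimage choices, and length partitions, each query (c) or (d) is answered by a constant number of calls to Lemma~\ref{lemma-data} and thus runs in amortized constant time depending only on $d$ and $\Sigma$, and the returned sequence of segments and jumps projects via $\theta$ to the desired closed walk in $G$.
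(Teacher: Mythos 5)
Your setup (normal representation, encoding the homotopy class of a short closed walk by the bounded-length signature of arc crossings, reducing (a) and (b) to edge deletion and weight-zeroing in a single planar auxiliary graph) is sound, and your handling of updates matches the paper's in spirit. The gap is in the realization step for queries (c) and (d), and it is genuine.

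You correctly identify the coupling problem: the waypoint $x_j\in A_{i_j}$ reached by segment $P_{j-1}$ fixes the start of $P_j$ via the identification. Your fix---greedily query, and if extension fails, forbid the returned waypoint and retry a bounded number of times---does not work, because the bound you invoke is on the \emph{size of a realization}, not on the \emph{size of the search space}. Concretely, there can be $\Omega(n)$ vertices of $A_{i_1}$ within the first segment's length budget of $v$, of which only one has a partner on $B_{i_1}$ from which the remaining segments can be completed. A greedy loop that forbids one waypoint at a time can cycle through linearly many dead ends before hitting the viable one, so the per-query time is not $O_{d,\Sigma}(1)$. The auxiliary ``arc hub'' vertices do not help, since they only answer ``some vertex of this arc is reachable within budget $\ell$,'' and there is no way to phrase the coupled two-sided condition (reach $x$ on $A_i$ from the left \emph{and} reach the end from $x$'s partner on $B_i$) as a single bounded-length path query in $G'$.

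The paper avoids this entirely by moving the coupling into the graph. Instead of querying $G'$ and stitching segments, it glues $O_{d,\Sigma}(1)$ disjoint copies $(G^s,\Delta^s,\theta^s)$ of the disk representation, one for each valid crossing sequence $s$ of length at most $d$, identifying $B^{s'}_i$ with $A^{s'a_i}_i$ (and symmetrically). The result is a single planar graph $G_+$ of size $O(|V(G)|)$ with a projection $\theta_+$ to $\Sigma$, in which every walk of length at most $d$ in $G$ lifts to an honest path of the same length, and the homotopy class of a closed walk is read off from which copies $\Delta^{s_1},\Delta^{s_2}$ contain the lift's endpoints. A query then becomes a constant number of bounded-length, bounded-weight single-pair path queries between the constantly many preimages of $v$ in $G_+$, which Lemma~\ref{lemma-data} answers directly, with no waypoint enumeration at all. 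To repair your argument you would essentially need to reinvent this ``unrolled'' graph; the local retry strategy cannot be made to run in amortized constant time.
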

\begin{proof}
Let $G'$ be the graph in a disk $\Delta$ and $\theta:\Delta\to\Sigma$ the surjection that form the normal representation
of $G_0\cup F$, and let $A_1$, $B_1$, \ldots, $A_p$, $B_p$ be the intervals in the boundary of $\Delta$ as in the definition
of the normal representation.
Consider any sequence $s$ of symbols from $\{a_1,b_1,\ldots, a_p,b_p\}$ of length at most $d$ (including the empty one).
We say that $s$ is valid if no appearance of $a_i$ is adjacent to an appearance of $b_i$ in $s$.
For every valid sequence $s$, let $(G^s,\Delta^s,\theta^s)$ be a disjoint copy of $(G',\Delta,\theta)$, and let $A^s_i$ and $B^s_i$ be the corresponding intervals in the boundary of $\Delta^s$.
Glue the copies in a tree-like fashion as follows.  For every non-empty
valid sequence $s=s'a_i$,  identify all points $x\in B^{s'}_i$ and $y\in A^s_i$ such that $\theta^{s'}(x)=\theta^s(y)$.  Similarly, for every non-empty
valid sequence $s=s'b_i$,  identify all points $x\in A^{s'}_i$ and $y\in B^s_i$ such that $\theta^{s'}(x)=\theta^s(y)$.

In this way, we obtained a disk $\Delta_+$ with a graph $G_+$ and a surjection $\theta_+:\Delta_+\to \Sigma$ such that $\theta_+(G_+)$ covers $G_0\cup F$ several times.
Note that for each $v\in V(G_0\cup F)$, the size of $\theta_+^{-1}(v)$ is bounded by a function of $d$, the genus of $\Sigma$ and the number of cuffs of $\Sigma$, which is a constant.
Observe that for every walk $W$ in $G_0\cup F$ of length at most $d$, there exists a walk $W_+$ of the same length in $G_+$ such that $\theta_+(W_+)=W$.
Furthermore, if $W$ is closed, then its homotopy class is uniquely determined by valid sequences $s_1$ and $s_2$ such that the endpoints of $W_+$ lie in $\Delta^{s_1}$ and $\Delta^{s_2}$.
Hence, to find a closed walk $W$ in $G_0\cup F$ of the prescribed homotopy class such that $W$ contains $v$ and has length at most $d$, it is sufficient to try all combinations of $s_1$ and $s_2$ which correspond to this
homotopy class and to test whether there exist vertices in $\Delta^{s_1}\cap \theta_+^{-1}(v)$ and $\Delta^{s_2}\cap \theta_+^{-1}(v)$ joined by a walk of length at most $d$.

To implement the data structure of Lemma~\ref{lemma-dataemb}, we use the data structure of Lemma~\ref{lemma-data} to represent $G_+$ with edge weights set to $1$
for edges corresponding to edges of $G$ and to $\infty$ for those corresponding to edges of $F$.
Contraction of an edge of $F$ is realized by changing the weight of the corresponding (constantly many)
edges to $0$.
Removal of an edge or a vertex from $G$ results in a removal of a constant number of edges or vertices from $G_+$
(if the vertex was obtained by contracting some edges in $F$, we may need to remove more vertices from the data structure,
but this is amortized to the contraction operation).  Hence we can update the data structure in constant time.

Since $F$ is a star forest, the operations (c) and (d) can be implemented as described in the previous paragraph by testing all appropriate homotopy classes in constant time,
using the operation of the data structure of Lemma~\ref{lemma-data} to find paths of length at most $3d+2$ and weight at most $d$.
\end{proof}

The following lemma enables us to restrict our attention to boundary-linked quadrangulations without small essential subgraphs.
We say that a surface $\Sigma'$ is \emph{at most as complex as $\Sigma$} if $\Sigma'$ has smaller genus than $\Sigma$, or
$\Sigma'$ has the same genus and fewer cuffs than $\Sigma$, or $\Sigma'$ is homeomorphic to $\Sigma$.
For a graph $G$ embedded in $\Sigma$, let $b(G)$ denote the multiset of the lengths of the boundary cycles of $G$.
For two multisets $S$, $T$ of integers such that $|S|=|T|=m$, we say that \emph{$S$ dominates $T$} if
there exists an ordering $s_1,\ldots, s_m$ of the elements of $S$ and an ordering $t_1,\ldots,t_m$ of the elements of $T$
such that $s_i\ge t_i$ for $i=1,\ldots, m$.  We say that \emph{$S$ strictly dominates $T$} if $S$ dominates $T$ and $S\neq T$.
Let us also recall the notation $\Sigma_h$ and $G_h$ was defined before Theorem~\ref{thm-struct}.

\begin{lemma}\label{lemma-qstruc}
Let $\nu(\Sigma,k)$ be any function.  For any surface $\Sigma$ and an integer $k\ge 0$, there exists a constant $\sigma$ and a linear-time algorithm as follows.
Let $G$ be a graph $2$-cell embedded in $\Sigma$ with boundary cycles $B_1$, \ldots, $B_c$ of total length at most $k$.
The algorithm returns a subgraph $H$ of $G$ with at most $\sigma$ vertices such that $B_1\cup \ldots\cup B_c\subseteq H$
and for each face $h$ of $H$, $G_h$ (in its embedding in $\Sigma_h$) does not contain any
connected essential subgraph with fewer than $\nu(\Sigma_h,k_h)$ edges, where $k_h$ is the sum of the lengths
of the boundary cycles of $G_h$.  In addition, if $\Sigma_h$ is not the cylinder, then $G_h$ is boundary-linked.
Furthermore, $\Sigma_h$ is at most as complex as $\Sigma$, and if $\Sigma_h$ is homeomorphic to $\Sigma$, then
$b(G)$ dominates $b(G_h)$.
\end{lemma}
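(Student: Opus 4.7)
The plan is to build $H$ iteratively. I initialize $H$ as the union $B_1\cup\cdots\cup B_c$ of all boundary cycles, and at each step I search every current face $h$ of $H$ for a \emph{defect}: either (i) a connected essential subgraph of $G_h$ with fewer than $\nu(\Sigma_h,k_h)$ edges, or, when $\Sigma_h$ is not a cylinder, (ii) a witness of failure of the boundary-linked condition, namely a short spoke in the disk case or a short cycle surrounding a cuff in the more complex cases. Whenever such a subgraph $K$ is found, I add $V(K)\cup E(K)$ to $H$; this refines $h$ into one or more new faces, and the process continues on the refined $H$.

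To locate defects efficiently I would use Lemma~\ref{lemma-schism}, which says that every inclusion-minimal connected essential subgraph is the union of a bounded number of short paths or cycles of prescribed homotopy types meeting at at most two branch vertices. I initialize the data structure of Lemma~\ref{lemma-dataemb} from a normal representation of $G_h$ and, for each vertex $v$, test in amortized constant time whether there exists a short walk through $v$ of each relevant homotopy type (non-null-homotopic in $\Sigma_h$, or homotopic to the boundary of a patch of a specified set of cuffs). Combining a bounded number of such queries anchored at one or two vertices covers all five outcomes (i)--(v) of Lemma~\ref{lemma-schism}, and analogous queries locate disk spokes and cuff-surrounding cycles for the boundary-linked witnesses. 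The total time to search a single face is $O(|V(G_h)|)$.

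Termination and the bound on $|V(H)|$ come from a monovariant on the multiset $\{(\Sigma_h,b(G_h)) : h\text{ is a face of }H\}$, ordered lexicographically by surface complexity (smaller genus first; equal genus with fewer cuffs next) and then by the strict-domination order on boundary-length multisets. Each defect insertion strictly decreases this monovariant: cutting along an essential subgraph yields faces whose surfaces are each strictly simpler than $\Sigma_h$; cutting a disk along a spoke $P$ with $|P|$ strictly less than both boundary arcs yields two disks whose boundary lengths are both strictly less than $|B|$; and cutting a non-disk, non-cylinder surface along a short cycle surrounding a cuff peels off a cylinder (strictly simpler) and leaves a face on a surface of the same or lower complexity but with a strictly shorter cuff in place of the old one. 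Since the initial multiset is bounded in terms of $\Sigma$ and $k$, the process halts after a constant number of iterations depending only on $\Sigma$, $k$, and $\nu$, and because each added $K$ has bounded size, $|V(H)|\le\sigma$ for a suitable constant $\sigma$.

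The main obstacle is the topological case analysis underlying the monovariant --- verifying that every configuration listed in Lemma~\ref{lemma-schism} (paths between distinct cuffs, non-surrounding cycles, spokes with no base, dumbbells, theta graphs, lollipops), together with the boundary-linked witnesses, really does simplify every resulting face in the chosen well-ordering, regardless of how much the face has already been carved up by earlier cuts. For the running time, since only constantly many iterations occur, one may reinitialize the data structure of Lemma~\ref{lemma-dataemb} for each new face; the initialization costs sum to $O(|V(G)|)$, and all intervening queries and updates take amortized constant time, yielding overall linear running time.
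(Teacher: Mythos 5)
Your proposal follows the paper's own strategy: initialize $H$ with the boundary cycles, then iteratively use the data structure of Lemma~\ref{lemma-dataemb} to locate a short connected essential subgraph (via Lemma~\ref{lemma-schism}) or a short witness of non-boundary-linkedness in some face, cut $H$ along it, and recurse, with termination because each cut strictly simplifies the surface or strictly shrinks the boundary-length multiset of the affected face --- the paper phrases this as induction on the pair $(\Sigma,k)$ rather than a multiset monovariant, but these are the same well-founded argument. The one small detail you gloss over is the disk case: a short spoke with both bases longer is not a closed-walk homotopy query of the kind Lemma~\ref{lemma-dataemb} supports, and the paper instead uses a direct breadth-first search from the boundary vertices; this is an easy fix and does not affect the overall approach or correctness.
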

\begin{proof}
We proceed by induction on the complexity of the surface and on $k$, i.e., we assume that
the algorithm exists for surfaces at most as complex as $\Sigma$ that either
are not homeomorphic to $\Sigma$, or the total length of their boundary cycles is less than $k$.

The claim is obvious if $\Sigma$ is the sphere (we can take $H$ to be empty).
If $\Sigma$ is the cylinder, we test whether the distance between $B_1$ and $B_2$ in $G$ is at least $\nu(\Sigma, |B_1|+|B_2|)$.
If so, then $G$ does not contain any connected essential subgraph with fewer than $\nu(\Sigma, |B_1|+|B_2|)$ edges by Lemma~\ref{lemma-schism},
and we return $H=B_1\cup B_2$.  Otherwise, let $P$ be the shortest path between $B_1$ and $B_2$, and let $f$ be the unique face of
$B_1\cup B_2\cup P$.  We apply the algorithm from the induction hypothesis to the graph $G_f$ embedded in a disk, and return the graph obtained
from the result by identifying the two paths in its boundary corresponding to~$P$.

Suppose that $\Sigma$ is the disk.  If $G$ is boundary-linked, we can take $H=B_1$.  Otherwise, by performing breadth-first search
from every vertex of $B_1$, we can in linear time find
a spoke $P$ of $B_1$ such that both bases of $P$ have length less than $|P|$.  Let $f_1$ and $f_2$ be the faces of $B_1\cup P$.
Note that both $G_{f_1}$ and $G_{f_2}$ are embedded in disks with boundary cycles of length less than $k$, and thus by the induction hypothesis,
we can find their subgraphs $H_1$ and $H_2$, respectively, satisfying the conclusions of Lemma~\ref{lemma-qstruc}.
We set $H=H_1\cup H_2$.

Hence, assume that $\Sigma$ either has non-zero genus or at least three cuffs.
We build the data structure of Lemma~\ref{lemma-dataemb} (with $F$ empty), and by querying all vertices if necessary,
we either find a closed walk $W$ satisfying one of the following conditions or decide that there is no such walk:
\begin{enumerate}[(i)]
\item $W$ has length at most $2\nu(\Sigma, k)+2k$ and it is not null-homotopic in $\Sigma$ even after patching a single cuff with a disk, or
\item $W$ is a cycle homotopically equivalent to a boundary cycle $B_i$ for some $i\in \{1,\ldots, c\}$ and $|W|<|B_i|$.
\end{enumerate}
Using Lemma~\ref{lemma-schism}, it is easy to see that that if no such walk exists, then $G$ is boundary-linked and contains no essential subgraph with
less than $\nu(\Sigma,k)$ edges; hence, we can set $H=B_1\cup \ldots\cup B_c$.

Suppose that a walk $W$ as in (i) exists.  Then we can find a connected subgraph $S$ of $G$ with $E(S)\subseteq E(W)$ such that
$S$ satisfies one of the conclusions of Lemma~\ref{lemma-schism}.  For each face $f$ of $S'=S\cup B_1\cup \ldots \cup B_c$, the surface $\Sigma_f$ has
either smaller genus than $\Sigma$, or the same genus and fewer cuffs.  Similarly, if a walk $W$ as in (ii) exists, then
each face $f$ of $S'=W\cup B_1\cup \ldots \cup B_c$ is either an open cylinder, or $\Sigma_f$ is homeomorphic to $\Sigma$ and
$b(G)$ strictly dominates $b(G_f)$.

Hence, in both cases we can apply the induction hypothesis for $G_f$, obtaining its subgraph $H_f$.  We let $H$ be the union of the graphs $H_f$ over all faces $f$ of~$S'$.
\end{proof}

To deal with the boundary-linked case without small essential subgraphs, we use the following shrinking lemma.
Let $G$ and $F$ be graphs embedded in the same surface such that the embeddings only intersect in vertices.  A \emph{diagonal} of a $4$-face $f=v_1v_2v_3v_4$ of $G$ is
an edge $e$ of $F$ joining either $v_1$ with $v_3$, or $v_2$ with $v_4$, and drawn inside $f$.  By \emph{contracting the diagonal} $e=v_iv_{i+2}$, we mean
identifying the ends of $e$ to a single vertex $v$ (modifying the embeddings of $G$ and $F$ in the natural way) and suppressing the arising $2$-faces $v_{i+1}v$ and $v_{i-1}v$ (where $v_0=v_4$).

\begin{lemma}\label{lemma-shrink}
For any surface $\Sigma$ other than the sphere with at most two cuffs, and for all integers $k,\nu\ge 0$, there exists a linear-time algorithm as follows.
Let $G$ be a boundary-linked quadrangulation of $\Sigma$ with boundary cycles $B_1$, \ldots, $B_c$ of total length at most $k$,
such that $G$ does not contain any connected essential subgraph with fewer than $\nu$ edges.
Let $F$ be a star forest embedded in $\Sigma$ so that the embeddings of $F$ and $G$ intersect only in vertices,
such that each edge of $F$ is a diagonal of a $4$-face of $G$.

The algorithm returns a boundary-linked quadrangulation $G'$ of $\Sigma$ and a forest $F'$ embedded in $\Sigma$ obtained from $G$ and $F$, respectively, by contracting some of the diagonals of $F$,
such that $G'$ does not contain any connected essential subgraph with fewer than $\nu$ edges,
and a surface $\Sigma'\subseteq \Sigma$ homeomorphic to $\Sigma$ such that $G'\cap \Sigma'$ is a quadrangulation
of $\Sigma'$ with boundary cycles $B'_1$, \ldots, $B'_c$ of the same length as the corresponding boundary cycles of $G$,
and one of the following claims holds.
\begin{itemize}
\item $G'\cap \Sigma'$ contains a connected essential subgraph with at most $2\nu+2k+2$ edges, or
\item $F'\cap \Sigma'=\emptyset$.
\end{itemize}
\end{lemma}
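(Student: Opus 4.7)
The plan is to process the diagonals of the star forest $F$ greedily using the dynamic data structure of Lemma~\ref{lemma-dataemb}. I would initialize that data structure on $G$ with the star forest $F$, choosing the depth parameter at least $2\nu+2k+2$, so that after any edge contraction or deletion I can decide in amortized constant time both (i) whether the new graph contains a connected essential subgraph with fewer than $\nu$ edges (via the short closed walks that are non-null-homotopic even after patching a single cuff), and (ii) whether the boundary-linked condition has been violated by a cycle surrounding some cuff shorter than the corresponding boundary (via the walks homotopic to a boundary-of-patch). Since each diagonal contraction preserves quadrangulation, a single linear-time pass over $E(F)$ will suffice.

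For each $e\in E(F)$ in turn, I would tentatively contract $e$ in the data structure and then run the two queries. If the contracted graph is still a boundary-linked quadrangulation of $\Sigma$ with no connected essential subgraph of fewer than $\nu$ edges, I commit to the contraction; otherwise I undo it and retain $e$ in $F'$. After processing all edges, let $G'$ be the resulting quadrangulation and $F'$ the set of retained diagonals; by construction $G'$ satisfies the global invariants demanded of it in the statement.

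To produce $\Sigma'$, for each cuff of $\Sigma$ I would push the boundary cycle $B_i$ inward along $G'$ to a cycle $B'_i$ surrounding the same cuff with $|B'_i|=|B_i|$: the boundary-linked property of $G'$ forces every such cycle to have length at least $|B_i|$, and a standard surgery (taking a shortest cycle surrounding the cuff strictly separated from $B_i$, or $B_i$ itself if none exists) produces one of length exactly $|B_i|$. Let $\Sigma'\subseteq \Sigma$ be the subsurface bounded by the $B'_i$. If all edges of $F'$ lie in the collars $\Sigma\setminus\Sigma'$ then $F'\cap\Sigma'=\emptyset$ and the second outcome holds. Otherwise, pick an edge $e\in F'$ that lies in $\Sigma'$. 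Since $e$ was not contracted, contracting it would create a connected essential subgraph $H$ of fewer than $\nu$ edges; pulling $H$ back through the contractions that were committed earlier inflates each of its edges by at most a factor of two (because $F$ is a star forest, so a cycle through a contracted center uses two edges of a single 4-face of $G$ carrying that diagonal), and at most $2k+2$ additional edges along the new boundaries $B'_i$ are needed to reroute the pulled-back subgraph so that it lies entirely in $G'\cap\Sigma'$, yielding a connected essential subgraph of $G'\cap\Sigma'$ with at most $2\nu+2k+2$ edges.

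The principal obstacle is the construction of the $B'_i$ with precisely the right length and the verification that the pulled-back essential subgraph fits inside $\Sigma'$ with the claimed bound. The length-preserving push will need the boundary-linked hypothesis on $G'$ together with the exclusion of the sphere with at most two cuffs, which guarantees that there is topological room for nondegenerate collars between $B_i$ and $B'_i$ and prevents a push from collapsing or from crossing other cuffs. The bound $2\nu+2k+2$ on the pullback will follow from the two ingredients: the factor-of-two inflation at star centers, controlled because each component of $F$ is a star and each diagonal is the unique edge connecting its two endpoints through a single 4-face; and the $2k+2$ rerouting budget, which is absorbed by the total boundary length. The linear-time guarantee then reduces to a single pass through $E(F)$ with amortized constant-time operations supplied by Lemma~\ref{lemma-dataemb}.
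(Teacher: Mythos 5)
Your general strategy (process the diagonals one at a time, using the data structure of Lemma~\ref{lemma-dataemb} to detect violations) matches the paper's, but your handling of a failing contraction diverges from the paper's in a way that leaves a genuine gap. You propose to \emph{undo} every contraction that breaks an invariant, retain that diagonal in $F'$, and then at the very end construct $\Sigma'$ by ``pushing each $B_i$ inward'' to a cycle of the same length. That push is not a well-defined operation: the boundary-linked hypothesis only guarantees that every cycle surrounding the cuff has length \emph{at least} $|B_i|$, and the shortest such cycle (or, if you prefer, the innermost one) may be strictly longer than $|B_i|$. Your proposed surgery therefore does not produce cycles $B'_i$ of the required length. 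The paper avoids this by shrinking $\Sigma'$ \emph{dynamically}: whenever a tentative contraction of $uv$ creates a cycle $W'$ of length $|B_i|-2$ surrounding a cuff (the parity argument in a quadrangulation pins the length to exactly $|B_i|-2$, which you never exploit), it observes that this $W'$ lifts, in the uncontracted graph, to a cycle $W\neq B_i$ of length exactly $|B_i|$ with the offending diagonal $uv$ lying in the cylinder between $B_i$ and $W$. That cylinder is then removed from $\Sigma'$ (making $W$ the new boundary cycle), the contraction of $uv$ is \emph{not} committed, and $uv$ is automatically ejected from $\Sigma'$. This dynamic surgery is precisely what makes the second outcome ($F'\cap\Sigma'=\emptyset$) attainable; in your scheme, an un-contracted diagonal may well remain in $\Sigma'$ with no mechanism to get rid of it.

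Your argument for the first outcome also does not hold up. In the paper, when a contraction of $uv$ creates a short walk that is essential even after patching a single cuff, the contraction \emph{is} committed and the algorithm halts immediately --- the bound $2\nu+2k+2$ then comes from a single contraction step and the query depth $2\nu+2k$. You instead undo that contraction and try to argue, at the end, that for an edge $e\in F'\cap\Sigma'$ the hypothetical contraction of $e$ would yield a short essential subgraph that you can then ``pull back through the contractions that were committed earlier'' with a factor-of-two blowup. But the subgraph in question lives in $G'/e$, and to place it in $G'$ you need to undo only the one contraction of $e$, not the earlier ones; the factor-of-two claim for stars is also dubious, since a cycle passing through a contracted star centre may enter at one leaf $v_i$ and leave at another $v_j$, and the detour $v_i\to u\to v_j$ costs $4$ rather than $2$. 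Finally, the ``$2k+2$ rerouting budget'' to force the pulled-back subgraph inside $\Sigma'$ is asserted without justification and has no clear bound, because you have no control over where $\Sigma'$ sits relative to the pulled-back subgraph. The paper sidesteps all of these difficulties by committing and halting in the first case and by shrinking $\Sigma'$ locally in the second; I recommend you adopt that case split.
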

\begin{proof}
We use the data structure of Lemma~\ref{lemma-dataemb}.  Initially, set $\Sigma'=\Sigma$, $G'=G$ and $F'=F$.  We process the diagonals of $F$ one by one.
If the currently processed diagonal $uv$ does not lie in $\Sigma'$, we ignore it.  Otherwise, we contract $uv$ in $G'$ to a vertex $w$, obtaining a new graph $G''$ with forest $F''$.
We use the data structure to test whether $G''\cap \Sigma'$ contains a closed walk $W'$ containing $w$,
such that either $W'$ has length at most $2\nu+2k$ and it is not null-homotopic in $\Sigma$ even after patching a single cuff with a disk,
or $W'$ is a cycle homotopically equivalent to a bountary cycle $B_i$ for some $i\in \{1,\ldots, c\}$ and $|W'|<|B_i|$.
If no such closed walk exists, observe that $G''\cap \Sigma'$ is boundary linked and does not contain any connected essential subgraph with fewer than $\nu$ edges.
In this case, we set $G'=G''$, $F'=F''$ and proceed with further diagonals of $F$.

If $W'$ exists and is not null-homotopic in $\Sigma$ even after patching a single cuff with a disk, then we set $G'=G''$, $F'=F''$, and the algorithm ends---the graph $G''\cap \Sigma'$ contains a connected essential subgraph with at most $2\nu+2k+2$ edges.

Suppose now that $W'$ exists and is homotopically equivalent to a boundary cycle $B_i$ and $|W'|<|B_i|$.  Since $G''$ is a quadrangulation, $|W|$ and $|B'_i|$ have the same parity.
And since $G'$ is boundary-linked, we conclude that $|W'|=|B_i|-2$. Note that $G'$ contains a cycle $W\neq B_i$ of length $|B_i|$ that is homotopically equivalent to $B_i$,
such that the diagonal $uv$ is contained in the part of $\Sigma'$ between $B_i$ and $W$.
In this case, we alter $\Sigma'$ by removing the cylinder between $B_i$ and $W$, excluding the cycle $W$ which becomes a new boundary cycle;
to reflect this in the data structure, it suffices to remove all the vertices and edges of $G'$ that belong to the removed cylinder.
We keep the current graph $G'$ and forest $F'$ and we proceed with processing futher diagonals of $F$.

Note that in the previous paragraph, the removed cylinder contains the diagonal $uv$.  Hence, if the algorithm processes all diagonals of $F$, then $G'\cap \Sigma'$ contains no non-contracted diagonals of $F$
as required by the second outcome of the lemma.
\end{proof}

Next, we design an algorithm to get rid of short contractible separating cycles.

\begin{lemma}\label{lemma-elim24}
For any surface $\Sigma$, there exists a linear-time algorithm that for any graph $G$ with a $2$-cell embedding in $\Sigma$
returns its subgraph $H$ such that
\begin{itemize}
\item all boundary cycles of $G$ belong to $H$,
\item all contractible cycles in $H$ of length at most $4$ bound $2$-cell faces, and
\item all vertices and edges of $G$ that do not belong to $H$ are drawn in $2$-cell $(\le\!4)$-faces of $H$.
\end{itemize}
\end{lemma}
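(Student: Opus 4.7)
The plan is to construct $H$ from $G$ by greedy iterative excision. Initialize $H := G$; while there exists a contractible cycle $C$ in $H$ of length at most $4$ that does not bound a $2$-cell face of $H$, identify the closed disk $\Delta\subseteq\Sigma$ whose boundary equals $C$ (unique except when $\Sigma$ is the sphere without cuffs, in which case one of the two disks is selected by a fixed tie-breaking rule) and delete from $H$ every vertex and every edge of $H$ drawn in the interior of $\Delta$. When no such $C$ remains, return $H$.

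For correctness, each boundary cycle of $G$ traces a cuff and is therefore non-contractible in $\Sigma$; hence it cannot coincide with any processed $C$ and no boundary vertex or edge can lie in the interior of a contractible disk $\Delta$, so the boundary cycles survive. After an excision step the interior of $\Delta$ contains no remaining vertex or edge of $H$, so it becomes a single closed $2$-cell face of $H$ with boundary walk exactly $C$ and of length at most $4$; the vertices and edges deleted in that step now lie inside this face. The invariant ``every vertex or edge of $G$ that does not belong to the current $H$ lies in a $2$-cell $(\le 4)$-face of the current $H$'' is preserved by induction on the iteration count, and the loop terminates because $|V(H)|+|E(H)|$ strictly decreases at each step. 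The terminating condition is exactly the required property that every contractible cycle of $H$ of length at most $4$ bounds a $2$-cell face.

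The main obstacle is meeting the linear-time bound, since a $2$-cell embedded graph can a priori contain $\Theta(n^2)$ short cycles. I plan to drive the search using the data structure of Lemma~\ref{lemma-dataemb} with $F=\emptyset$ and $d=4$, which supports contractible-closed-walk queries through a specified vertex as well as edge and vertex deletions in amortized constant time; in parallel I maintain a doubly connected edge list of the current embedding of $H$, so that each returned walk of length at most $4$ can be tested in constant time against the boundary walks of its incident faces to decide whether it is face-bounding. The algorithm keeps a queue of ``suspect'' vertices, initialized to $V(G)$; it repeatedly pops a vertex $v$, queries for a contractible closed walk of length at most $4$ through $v$ that is not the boundary walk of a $2$-cell face, and if one is found it traces the interior of the corresponding disk $\Delta$ through the DCEL in time $O(|\Delta\cap H|)$, excises it, updates both structures, and re-enqueues the at most four vertices of $C$. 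Since deletions can only destroy short cycles rather than create them, and since cycles entirely outside the processed disk are unaffected by its excision, once a vertex has been popped without yielding a bad walk it need not be reinspected unless a subsequent excision re-enqueues it. Each excision removes at least one vertex or edge, so the cumulative traversal cost telescopes to $O(|V(G)|+|E(G)|)$; the at most four re-enqueuings per excision keep the total query work linear as well, yielding the promised linear-time algorithm.
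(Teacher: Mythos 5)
Your high-level design — iterative excision of disks bounded by short non-facial contractible cycles, a queue of suspect vertices, and an amortized charge argument — matches the paper's. The gap is in the query that drives the loop: you have no way to ask the data structure for a \emph{non-facial} short contractible cycle.

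Lemma~\ref{lemma-dataemb} returns \emph{some} closed walk of length at most $d$ through $v$ in the requested homotopy class; with $D=\emptyset$ and $d=4$, a vertex incident with a $4$-face will typically be answered with that face's boundary walk. Testing the returned walk against the DCEL and finding it facial tells you nothing about whether some \emph{other} non-facial short contractible cycle passes through $v$, so the algorithm would incorrectly declare $v$ clean and move on — and, worse, never revisit it. The interface of Lemma~\ref{lemma-dataemb} does not expose a way to exclude edges from the search, so you cannot steer the query away from face boundaries.

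The paper avoids Lemma~\ref{lemma-dataemb} for exactly this reason. It builds a bespoke planar covering $G_0$ of $G$ (gluing copies indexed by valid sequences of length at most $2$, so that contractible $(\le 4)$-walks in $G$ lift to length-$4$ walks in $G_0$) and then uses Lemma~\ref{lemma-data} directly on $G_0$, which \emph{does} expose a $\{0,1,\infty\}$ weight function. For each edge $e=uv$ of $G$ and each of the at most $9$ sets $X$ that ``break'' every $(\le\!4)$-face incident with $e$ (one edge per such face, excluding $e$), it temporarily sets the weights of $\theta_0^{-1}(X\cup\{e\})$ to $\infty$ and queries for a $u_0$--$v_0$ path of length and weight at most $3$. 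Because all facial short cycles through $e$ are severed by $X$, any returned path together with $e$ must be a non-facial contractible $(\le\!4)$-cycle; and if no path is returned for any choice of $X$, $e$ provably lies on no such cycle. This edge-by-edge, weight-filtered search is the missing ingredient; the rest of your argument (boundary cycles survive, excision preserves the invariant, each element is removed once, hence linear time) is sound.
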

\begin{proof}
First, let us prepare a data structure similarly to Lemma~\ref{lemma-dataemb}.
Let $G'$ be the graph drawn in a disk $\Delta$ and $\theta:\Delta\to\Sigma$ the surjection that form the normal representation
of $G$, and let $A_1$, $B_1$, \ldots, $A_p$, $B_p$ be the intervals in the boundary of $\Delta$ as in the definition of the normal representation.
For every valid sequence $s$ of length at most two, let $(G^s,\Delta^s,\theta^s)$ be a disjoint copy of $(G',\Delta,\theta)$, and let $A^s_i$ and $B^s_i$ be the corresponding intervals in the boundary of $\Delta^s$.
Glue the copies in a tree-like fashion as in the proof of Lemma~\ref{lemma-dataemb}.

In this way, we obtained a graph $G_0$ drawn in a disk $\Delta_0$ and a surjection $\theta_0:\Delta_0\to \Sigma$ such that
$G$ is covered $(4p^2+1)$ times by $\theta_0(G_0)$.  For every closed walk $W_0$ in $G_0$, the walk $\theta_0(W_0)$ in $G$ is contractible.
Conversely, for every contractible closed walk $W$ in $G$ of length at most $4$, there exists a closed walk $W_0$ of length $4$ in $G_0$
such that $W=\theta_0(W_0)$.  Build the data structure of Lemma~\ref{lemma-data} for $G_0$ with $d=3$, initally setting the weights of
all edges to $1$.

Now, given an edge $e=uv$ of $G$, we can determine whether $e$ is contained in a non-facial contractible $(\le\!4)$-cycle in $G$
in constant time as follows.  We say a set $X\subset E(G)$ \emph{breaks $(\le\!4)$-faces} at $e$ if $X$ contains exactly one edge of
each $(\le\!4)$-face incident with $e$ and no other edges, and $e\not\in X$.  Note that there are at most $9$ sets that break
$(\le\!4)$-faces at $e$, and given the embedding of $G$, they can be enumerated in constant time.
Let $e_0=u_0v_0$ be an edge of $G_0$ such that $\theta_0(e_0)=e$.
For each set $X$ that breaks $(\le\!4)$-faces at $e$, set the weights of edges in $\theta_0^{-1}(X\cup \{e\})$ to $\infty$, test
whether $G_0$ contains a walk of length and weight at most $3$ from $u_0$ to $v_0$, and restore the weights of all edges to $1$.
If such a walk is found, then its image together with the edge $uv$ forms a non-facial contractible $(\le\!4)$-cycle in $G$.
Otherwise, no such cycle exists.

Now, for each edge $e\in E(G)$, we try to find such a cycle $C$, and if it exists, we remove all the vertices and edges of $G$ contained
in the open disk bounded by $C$ (and update the data structure accordingly).  We repeat this procedure for each edge $e$ until
all contractible cycles of length at most $4$ that contain $e$ bound $2$-cell faces.  Note that we remove each vertex and edge at most
once, hence the total time complexity of the algorithm is linear.
\end{proof}

Finally, we need the following result.
\begin{theorem}[Ne\v{s}et\v{r}il and Ossona de Mendez~\cite{grad2}]\label{thm-star}
For every $g\ge 0$, there exists $\mu\ge 0$ and a linear-time algorithm
that for a simple graph of Euler genus at most $g$ finds a coloring of its vertices
by at most $\mu$ colors such that the union of every two color classes induces a star forest.
\end{theorem}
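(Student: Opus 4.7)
My plan is to prove this via the machinery of bounded-expansion graph classes and low tree-depth colorings. The observation that drives the reduction is that a simple graph has tree-depth at most $2$ if and only if it is a star forest: tree-depth $\le 2$ means every connected component admits a rooted tree of depth at most $2$ in which every edge connects an ancestor with a descendant, and at depth $2$ the only such edges are from the root to the leaves. Consequently, a vertex coloring in which the union of any two color classes induces a subgraph of tree-depth at most $2$ is precisely a star-forest coloring in the sense of the statement. So it suffices to produce a low tree-depth coloring with parameter $p = 2$.

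Next, I would verify that the class of simple graphs of Euler genus at most $g$ has bounded expansion. By Euler's formula, an $n$-vertex simple graph embeddable in a surface of Euler genus $g$ has at most $3n + 3g - 6$ edges, so its average degree is $O(1)$. The class is closed under taking minors (since contracting or deleting an edge preserves embeddability), so every minor is again bounded-density; iterating the bound on shallow minors of depth $r$ yields $\nabla_r = O_g(1)$ uniformly in the graph, which is exactly the defining condition of bounded expansion.

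The existential part of the theorem then follows from the general low tree-depth coloring result for bounded-expansion classes: for every $p$ there is a constant $\mu = \mu(g,p)$ such that the vertices can be colored with $\mu$ colors so that any $p$ color classes induce a subgraph of tree-depth at most $p$. Setting $p = 2$ yields the required $\mu$.

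The main obstacle is the linear-time complexity. Here I would follow the transitive-fraternal augmentation approach: maintain an orientation of the current graph with bounded in-degree (exploiting the $O(1)$ degeneracy that comes from Euler's formula), then repeatedly augment by adding an edge between every pair of vertices that share an out-neighbor (fraternal edges) and every pair connected by a directed path of length two (transitive edges), reorienting to preserve bounded in-degree. Bounded expansion guarantees that the augmentations stop increasing maximum in-degree after a bounded number of rounds, and in each round every pair of edges to be added can be enumerated in time proportional to the number of vertices using the in-neighborhood data structure. A greedy coloring driven by the stabilized in-neighborhoods produces the desired star-forest coloring. The subtle point is arguing that the whole procedure, including the greedy coloring and the bookkeeping of in-neighborhoods across augmentations, runs in total time $O(|V(G)|)$ with a constant depending only on $g$; this relies crucially on the uniform density bound on shallow minors provided by the Euler-genus assumption.
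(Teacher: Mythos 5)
The statement you are proving is one the paper does not actually prove: it is imported verbatim from Ne\v{s}et\v{r}il and Ossona de Mendez~\cite{grad2} and used as a black box in the proof of Theorem~\ref{thm-mainalg}. Your blind reconstruction follows precisely the machinery of that cited source---bounded expansion, low tree-depth colorings, and transitive-fraternal augmentation---so rather than a different route you have, in effect, reproduced the route of the reference the paper leans on. Your reduction observation is right: tree-depth at most~$2$ is exactly the star-forest property, so a low tree-depth coloring with parameter~$p=2$ yields the required coloring, and the class of graphs of Euler genus at most~$g$ is minor-closed with bounded average degree by Euler's formula, hence of bounded expansion.

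One claim in your complexity argument is inaccurate and should be fixed. You write that ``bounded expansion guarantees that the augmentations stop increasing maximum in-degree after a bounded number of rounds.'' That is not true: transitive-fraternal augmentation keeps adding edges, and the maximum in-degree can grow with each round. What is actually needed, and what bounded expansion delivers, is different: for a \emph{fixed} target parameter~$p$ one performs a fixed number of augmentation rounds (for $p=2$ a single round suffices in the standard treatment), and in each of those rounds the current graph still has bounded expansion with constants depending only on~$g$ and the round index, so it admits an orientation with in-degree bounded by a constant $D(g,p)$. This bounded in-degree is what makes each round executable in $O(n)$ time and what lets the final greedy coloring (driven by the augmented in-neighborhoods) use only a bounded number of colors. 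Once you replace ``the process stabilizes'' by ``we stop after a fixed number of rounds, each of which preserves an in-degree bound,'' the argument is correct. It is also worth noting that for this specific application there is a shorter and more elementary path: the star chromatic number of graphs of Euler genus at most~$g$ is bounded (Albertson, Chappell, Kierstead, K\"undgen and Ramamurthi give $\chi_s\le 20$ in the planar case and $O(g^{3/5})$ in general), with accompanying linear-time algorithms; the paper's citation to~\cite{grad2} buys generality and a uniform framework, but is not the only way to get the result.
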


The algorithm of Theorem~\ref{thm-mainalg} is now straightforward.

\begin{proof}[Proof of Theorem~\ref{thm-mainalg}]
We proceed by induction on the complexity of the surface, i.e., we assume that
the algorithm exists for quadrangulations of surfaces at most as complex as $\Sigma$ that
are not homeomorphic to $\Sigma$.
By Observation~\ref{obs-mainalg-disk} and Lemma~\ref{lemma-algcyl}, we can assume that $\Sigma$
either has non-zero genus or at least three cuffs.

Let $\nu(\Sigma,k)\ge 5$ be a function defined so that for each surface $\Sigma$ and integer $k\ge 0$,
Theorem~\ref{thm-main} holds with $\nu\colonequals \nu(\Sigma,k)$. Let us apply the algorithm of Lemma~\ref{lemma-qstruc} with this function $\nu$; let $H$ be the resulting subgraph of $G$.
For each extension $\psi'$ of $\psi$ to $H$, we will test whether $\psi'$ extends to a $3$-coloring of $G_h$
for every face $h$ of $H$.  If that is the case, we also find the extensions, which together give a $3$-coloring of $G$ extending $\psi$.
Hence, it suffices to discuss how to extend $\psi'$ to $G_h$ for a face $h$ of $G$.

If $\Sigma_h$ is not homeomorphic to $\Sigma$, we apply the algorithm which exists by the induction hypothesis.  Hence, assume that $\Sigma_h$ is
homeomorphic to $\Sigma$, and thus $\Sigma_h$ is not a cylinder and the sum of lengths of boundary cycles of $G_h$ is at most~$k$.
Furthermore, $G_h$ is boundary-linked and contains no essential subgraph with less than $\nu(\Sigma,k)$ edges.
By Theorem~\ref{thm-main}, $\psi'$ extends to $G_h$ if and only if $\psi'$ satisfies the winding number constraint,
and this can be tested in linear time.

Suppose that $\psi'$ satisfies the winding number constraint, and thus we need to find a $3$-coloring of $G_h$ that extends $\psi'$.
We construct a sequence $(G_0,\Sigma_0,\psi_0)$, $(G_1,\Sigma_1,\psi_1)$, \ldots, $(G_r,\Sigma_r,\psi_r)$,
where for $0\le i\le r$, $G_i$ is a boundary-linked quadrangulation of $\Sigma_i$ with no essential subgraph with less than $\nu(\Sigma,k)$ edges,
$\Sigma_i$ is homeomorphic to $\Sigma$ and $\psi_i$ is a $3$-coloring of the boundary cycles of $G_i$, as follows.
Let $n_i$ denote the number of vertices of $G_i$ that are not contained in
its boundary cycles.  

Let $G_0=G_h$, $\Sigma_0=\Sigma_h$ and let $\psi_0$ be the restriction of $\psi'$ to the boundary cycles of $G_h$.
Suppose that we already constructed $G_i$ for some $i\ge 0$.
To obtain $G_{i+1}$, first take its subgraph using Lemma~\ref{lemma-elim24} and suppress the resulting $2$-faces. 
Let $G'_i$ denote the resulting graph.  Since $G_i$ is boundary-linked, observe that $G'_i$ contains no parallel edges and that $G$ does not contain
three distinct $4$-faces $u_1v_1w_1x_1$, $u_2v_2w_2x_2$ and $u_3v_3w_3x_3$ such that $u_1=u_2=u_3$ and $w_1=w_2=w_3$.  Let $n'_i$ denote the number of vertices of $G'_i$ that are not contained in
its boundary cycles.

Next, let $F'_i$ be a maximal graph embedded in $\Sigma_i$ such that each edge of $F'_i$ is a diagonal of a face of $G'_i$ and $F'_i$ has no parallel edges.
Note that $|E(F'_i)|$ is at least half the number of faces
of $G_i$, and thus $|E(F'_i)|\ge n'_i/2$.  By Theorem~\ref{thm-star}, there exists a subgraph $F_i$ of $F'_i$ with at least $\frac{n'_i}{\mu^2}$ edges such that $F_i$ is a star forest.
Apply Lemma~\ref{lemma-shrink} to $G'_i$ and $F_i$, let $G''_i$ and $\Sigma_{i+1}$ be the resulting graph and surface, and let $G_{i+1}=G''_i\cap \Sigma_{i+1}$.
Use Lemma~\ref{lemma-algcyl} to find a $3$-coloring $\psi'_i$ of $G''_i\cap\overline{\Sigma_i\setminus\Sigma_{i+1}}$ that extends $\psi_i$, and let $\psi_{i+1}$
be the restriction of this coloring to the boundary cycles of $G_{i+1}$.
If $G_{i+1}$ contains a connected essential subgraph with at most $2\nu(\Sigma,k)+2k+2$ edges, then let $r\colonequals i+1$, otherwise proceed with the construction.

Note that if $i<r-1$, then $G_{i+1}$ contains no non-contracted diagonals of $F_i$, and thus $n_{i+1}\le n'_i-|F_i|\le (1-1/\mu^2)n'_i\le (1-1/\mu^2)n_i$.
Observe that $|V(G_i)|\le 3n_i$, since $G_i$ is boundary-linked and does not contain an essential subgraph with at most two edges.
Hence, each step of the construction has time complexity linear in $n_i$, and thus the total time complexity for the construction of the sequence
is $O\left(\sum_{i=0}^{r-1} n_i\right)\le O\left(|V(G)|\sum_{i\ge 0} (1-1/\mu^2)^i\right)=O(|V(G)|)$.

Now, $G_r$ contains a connected essential subgraph with at most $2\nu(\Sigma,k)+2k+2$ edges.  Let $H_r$ be the union of the boundary cycles of $G_r$ with this essential subgraph,
and observe that for each face $h$ of $H_r$, the surface $\Sigma_{r,h}$ is at most as complex as $\Sigma$ and not homeomorphic to $\Sigma$.  Hence, we can find
a $3$-coloring of $G_r$ extending $\psi_r$ (which exists by Theorem~\ref{thm-main}) by trying all the possible extensions of $\psi_r$ to $H_r$ and for each of them
applying the induction hypothesis to the subgraphs drawn in the faces of $H_r$.

Furthermore, for $0\le i\le r-1$, given a $3$-coloring $\varphi_{i+1}$ of $G_{i+1}$ that extends $\psi_{i+1}$, we can obtain a coloring of $G_i$ that extends $\psi_i$ in linear time as follows.
First, let $\varphi''_i=\psi'_i\cup \varphi_{i+1}$ be a $3$-coloring of $G''_i$ that extends $\psi_i$.   Next, to all vertices that were identified to a single vertex $w$ by
contracting the diagonals in $F_i$ give the same color as $w$, thus obtaining a $3$-coloring $\varphi'_i$ of $G'_i$ that extends $\psi_i$.  Finally,
use the algorithm of Observation~\ref{obs-mainalg-disk} to extend this coloring to the parts of $G_i$ removed during the construction of $G'_i$.

Then, $\varphi_0$ is a $3$-coloring of $G_h$ which extends $\psi'$, as required.
\end{proof}

\section{Important subgraphs of embedded graphs}\label{sec-struct}

We need a stronger form of Theorem~\ref{thm:ctvrty} which deals with graphs with precolored cycles.  First, let us give several definitions.
Let $B$ be a subgraph of $G$.  We say that $G$ is \emph{$B$-critical} if $G\neq B$ and for every proper subgraph $G'$ of $G$
such that $B\subseteq G'$, there exists a $3$-coloring of $B$ that extends to a $3$-coloring of $G'$, but not to a $3$-coloring of $G$.
Note that $G$ is $4$-critical if and only if $G$ is $\emptyset$-critical.  

Suppose that a graph $G$ is embedded in a surface $\Sigma$ so that every cuff of $\Sigma$ traces a cycle in $G$.  To each face $f$ of $G$,
we assign a weight $w_0(f)$ as follows.  If $f$ is homeomorphic to an open disk, then let
$$w_0(f)=\begin{cases}
0&\text{if } |f|\le 4\\
4/4113&\text{if } |f|=5\\
72/4113&\text{if } |f|=6\\
540/4113&\text{if } |f|=7\\
2184/4113&\text{if } |f|=8\\
|f|-8&\text{if } |f|\ge 9.
\end{cases}$$
If $f$ is not homeomorphic to an open disk, then let $w_0(f)=|f|$.
For a surface $\Pi$ of Euler genus $g$ with $c$ cuffs, let $s(\Pi)=6c-6$ if $g=0$ and $c\le 2$, and $s(\Pi)=120g+48c-120$ otherwise.
Recall that $\Sigma_f$ was defined prior to Theorem~\ref{thm-struct}.
For a real number $\eta$ and a face $f$ of $G$, let $w_\eta(f)=w_0(f)+\eta s(\Sigma_f)$.
Let $$w_\eta(G)=\sum_{\text{$f$ face of $G$}} w_\eta(f).$$

\begin{theorem}[{Dvo\v{r}\'ak et al.~\cite[Corollary~5.5]{trfree4}}]\label{thm-ctvrtygen}
There exists a constant $\eta$ such that the following holds.
Let $G$ be a triangle-free graph embedded in a surface $\Sigma$ without non-contractible $4$-cycles, so that every cuff of $\Sigma$ traces a cycle in $G$,
and let $B$ be the union of boundary cycles of $G$.  If $G$ is $B$-critical, then $w_\eta(G)\le w_\eta(B)$.
\end{theorem}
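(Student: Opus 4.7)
The plan is to prove the theorem by induction on $|V(G)|+|E(G)|$ ordered lexicographically, taking a counterexample that is minimum in this measure. First I would perform a topological reduction: if $G$ contains an essential subgraph $H$ of bounded size (where the bound depends on $\eta$), I would cut along $H$ to obtain a graph $G'$ embedded in a simpler surface $\Sigma'$ with $H$ duplicated into additional precolored cycles. The term $\eta s(\Sigma_f)$ in the weight function is precisely calibrated so that the decrease in $s$ upon cutting exceeds the cost of introducing the new boundary, letting one push the inequality through the surgery. After this reduction, we may assume that $G$ has no essential subgraphs of size below the threshold, and in particular that every non-contractible cycle is long.

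Next I would set up the discharging. Each face $f$ receives an initial charge of $w_\eta(f)$, and the task is to redistribute charges so that every non-boundary face ends with charge zero while boundary faces retain at most $w_\eta(B)$ in total. The peculiar weights $0,\ 4/4113,\ 72/4113,\ 540/4113,\ 2184/4113,\ |f|-8$ for disk faces of length $4,5,6,7,8,\ge 9$ are tuned so that the natural donations from long faces to short faces (via paths of $3$-vertices, via shared edges, and via local neighborhoods) exactly close the discharging accounting in the presence of specific reducible configurations. One sets up a family of inequalities --- one for each possible local configuration --- and the weights are the coordinates of a solution to the resulting linear system.

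The core of the argument is then a reducible-configuration analysis: identify a finite list of local configurations such that (i) every $B$-critical triangle-free graph with no non-contractible $4$-cycle and no short essential subgraph contains one of them, and (ii) each admits a reduction to a strictly smaller $B$-critical graph on the same surface and with the same boundary $B$, where the change in $w_\eta$ is bounded by a known constant. Combining (ii) with the induction hypothesis and then (i) with a global discharging argument yields the required inequality, contradicting the minimality of the counterexample.

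The hard part will be the reducibility analysis for faces of intermediate length $5,6,7,8$: triangle-freeness and the absence of non-contractible $4$-cycles leave just enough rigidity that standard Gr\"otzsch-style reductions (identifying two vertices at distance two in a short face, or collapsing a small face together with a well-chosen neighbor) can still be performed, but only under delicate hypotheses on the surrounding faces. Matching those hypotheses to the precise numerical weights $4/4113$ through $2184/4113$ is the technical core, and it is the reason the paper defers the full proof to the detailed development in~\cite{trfree4}.
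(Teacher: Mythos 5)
This statement is not proved in the present paper: it is quoted verbatim as Corollary~5.6 of the companion paper~\cite{trfree4} and is used here as a black box (its only role is as the input to Lemma~\ref{lemma-almstruct}). There is therefore no in-paper proof to compare your attempt against, and the correct reading of the citation bracket in the theorem header is that the authors are importing the result, not reproving it. Your sketch does correctly identify the general machinery of the cited development --- surgery along short essential subgraphs so that the $\eta\, s(\Sigma_f)$ term absorbs the change in surface complexity, followed by a potential-function/discharging analysis in which the peculiar rationals $4/4113,\dots,2184/4113$ are the tuned solution of a system of configuration inequalities, with a reducible-configuration argument closing the induction --- and you even state at the end that the full details are deferred to~\cite{trfree4}. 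So as a description of what the external proof looks like, your proposal is plausible and broadly in line with the methodology of that series; but as a proof it is only a plan. Nearly everything of substance (which essential subgraphs are short enough to cut, how precolored boundaries propagate through the cuts, the exact list of reducible configurations, and the verification that the chosen weights satisfy every discharging constraint) is left as a promissory note, and none of it is checkable from what you wrote. If you want to submit a genuine proof, you would need to carry out that program in full, which is the content of an entire separate paper; within the present manuscript the appropriate answer is simply that the theorem is external.
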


By iterating Theorem~\ref{thm-ctvrtygen}, we obtain a variant of Theorem~\ref{thm-struct}.
\begin{lemma}\label{lemma-almstruct}
Let $\eta$ be the constant of Theorem~\ref{thm-ctvrtygen}.  Let $G$ be a triangle-free graph embedded in a surface $\Sigma$ without non-contractible $4$-cycles, so that every cuff of $\Sigma$ traces a cycle in $G$, and
let $B$ be the union of boundary cycles of $G$.  There exists a subgraph $H$ of $G$ such that $B\subseteq H$,
$w_\eta(H)\le w_\eta(B)$ and for every face $h$ of $H$, every $3$-coloring of the boundary of $h$ extends to a $3$-coloring of $G_h$.
\end{lemma}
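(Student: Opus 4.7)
The plan is to iterate Theorem~\ref{thm-ctvrtygen} face by face, starting from $B$. Set $H_0:=B$. So long as some face $h$ of the current $H_i$ admits a $3$-coloring $\psi_h$ of $\partial h$ that does not extend to a $3$-coloring of $G_h$, choose such an $h$ and $\psi_h$, and let $H'_h\subseteq G_h$ be an inclusion-minimal subgraph containing $\partial h$ for which $\psi_h$ has no extension. By minimality, every proper subgraph $G'$ of $H'_h$ with $\partial h\subseteq G'$ admits an extension of $\psi_h$ while $H'_h$ itself does not, so $H'_h$ is $\partial h$-critical in $\Sigma_h$. Define $H_{i+1}:=H_i\cup\theta_h(H'_h)$. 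Criticality forces $H'_h\supsetneq\partial h$, so $|E(H_{i+1})|>|E(H_i)|$ and, since $H_i\subseteq G$ throughout, the iteration terminates at a graph $H$ in which every face has the required extension property.

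To bound the weight, note that $\partial h$ viewed as an embedded graph in $\Sigma_h$ has a single face whose length equals $|\partial h|$ and whose ambient surface is $\Sigma_h$; hence $w_\eta(\partial h)=w_\eta(h)$, where on the right $w_\eta(h)$ denotes the contribution of $h$ to $w_\eta(H_i)$. Theorem~\ref{thm-ctvrtygen} applied to $H'_h$ in $\Sigma_h$ gives $w_\eta(H'_h)\le w_\eta(\partial h)=w_\eta(h)$. The update $H_i\mapsto H_{i+1}$ only alters the face structure by replacing $h$ with the faces of $H'_h$ inside $\Sigma_h$, so
$$w_\eta(H_{i+1})=w_\eta(H_i)-w_\eta(h)+w_\eta(H'_h)\le w_\eta(H_i).$$
Induction on $i$ then yields $w_\eta(H)\le w_\eta(H_0)=w_\eta(B)$, which is exactly the bound claimed.

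The main obstacle is justifying the application of Theorem~\ref{thm-ctvrtygen} to $H'_h$ in $\Sigma_h$ at every step: triangle-freeness and the cuff-tracing condition are inherited from $G_h$, but one must verify that $H'_h$ has no non-contractible $4$-cycle in $\Sigma_h$. For a $4$-cycle $C\subseteq H'_h$ disjoint from $\partial\Sigma_h$, the map $\theta_h$ is a homeomorphism on $C$ and sends it to a $4$-cycle of $G$ that, by hypothesis, bounds an open disk $D\subseteq\Sigma$; if $h$ is simply connected then $D\subseteq h$ and $\theta_h^{-1}(D)$ exhibits $C$ as contractible in $\Sigma_h$. When $h$ is topologically more complex --- as it is, for instance, for $H_0=B$ on a surface of positive genus --- the disk $D$ may escape $h$ and the argument must be supplemented by a separate analysis, and the case of $4$-cycles that meet $\partial\Sigma_h$ needs similar care. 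This is the delicate point; once every application of Theorem~\ref{thm-ctvrtygen} is justified, the iteration scheme above produces the required $H$.
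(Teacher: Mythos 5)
Your approach is essentially the same as the paper's: the paper packages the iteration as a maximality argument (take $H$ set-inclusion maximal with $B\subseteq H$ and $w_\eta(H)\le w_\eta(B)$; if some face $h$ had a non-extending coloring of $\partial h$, a $B_h$-critical subgraph $H_h$ of $G_h$ could be added, and Theorem~\ref{thm-ctvrtygen} would give $w_\eta(H\cup H'_h)\le w_\eta(H)\le w_\eta(B)$, contradicting maximality), while you run the greedy iteration explicitly. The weight bookkeeping --- the identity $w_\eta(\partial h)=w_\eta(h)$ and the update $w_\eta(H_{i+1})=w_\eta(H_i)-w_\eta(h)+w_\eta(H'_h)$ --- is exactly what the paper uses implicitly.

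The subtlety you flag in your last paragraph, namely that each application of Theorem~\ref{thm-ctvrtygen} requires the graph $H'_h$ to be embedded in $\Sigma_h$ with no non-contractible $4$-cycle, is a real point and is not discharged by your write-up, which explicitly defers it (``the argument must be supplemented by a separate analysis...once every application...is justified''). As it stands your proof is therefore conditional, not complete. You are right that the property is not inherited trivially: when $h$ has nontrivial topology, a $4$-cycle of $G_h$ interior to $\Sigma_h$ corresponds to a $4$-cycle of $G$ bounding a disk in $\Sigma$ that may contain part of $H$ and hence not lie in $h$, so contractibility in $\Sigma$ does not immediately give contractibility in $\Sigma_h$; and, as you note, $4$-cycles of $G_h$ meeting $\partial\Sigma_h$ need a separate argument because $\theta_h$ may identify their boundary vertices. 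For what it is worth, the paper's own proof of this lemma (a short maximality argument) also invokes Theorem~\ref{thm-ctvrtygen} on $H_h$ without explicitly verifying this hypothesis, so you have faithfully reproduced the paper's argument and in addition made visible a point the paper leaves implicit; but to turn your proposal into a complete proof you still need to either show that the $s(\Sigma_f)$-penalty built into $w_\eta$ rules out faces of the relevant topology, or otherwise argue that the critical subgraphs added at each step never create a non-contractible $4$-cycle in the cut surface.
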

\begin{proof}
Let $H$ be a maximal subgraph of $G$ such that $B\subseteq H$ and $w_\eta(H)\le w_\eta(B)$.  Consider any face $h$ of $H$, and let
$B_h$ be the union of boundary cycles of $G_h$ in its embedding in $\Sigma_h$.  If there exists a $3$-coloring of $B_h$ that does not extend to a $3$-coloring of $G_h$,
then $G_h$ contains a $B_h$-critical subgraph $H_h$.  Let $H'_h$ be the subgraph of $G$ corresponding to $H_h$.
By Theorem~\ref{thm-ctvrtygen}, we have $w_\eta(H_h)\le w_\eta(B_h)$, and thus $w_\eta(H\cup H'_h)\le w_\eta(H)\le w_\eta(B)$.  This contradicts the maximality of $H$.
Therefore, $H$ satisfies the conclusions of Lemma~\ref{lemma-almstruct}.
\end{proof}

Note that Lemma~\ref{lemma-almstruct} almost implies Theorem~\ref{thm-struct}, up to non-contractible $4$-cycles and quadrangulations.
The former are easy to deal with by cutting the surface, and we already analyzed quadrangulations in Lemma~\ref{lemma-qstruc}.

\begin{proof}[Proof of Theorem~\ref{thm-struct}.]
We proceed by induction on the complexity of the surface and on $k$, i.e., we assume that
the theorem holds for graphs embedded in surfaces at most as complex as $\Sigma$ that either
are not homeomorphic to $\Sigma$, or the total length of their boundary cycles is less than $k$.

Suppose first that
\begin{itemize}
\item $G$ contains a connected essential subgraph $K$ with at most $8$ edges, or 
\item $\Sigma$ is not the cylinder and $G$ contains a boundary cycle $B$ and a cycle $K$ surrounding the incident cuff such that $|K|<|B|$, or
\item $\Sigma$ is the cylinder and $G$ contains a non-contractible cycle $K$ shorter than both boundary cycles of $G$.
\end{itemize}
Then, we cut the surface along $K$, apply the induction hypothesis to the pieces and let $H$ be the union of the resulting subgraphs.  Hence, assume that $G$ has no such subgraph.

In particular, every non-contractible $4$-cycle surrounds a cuff bounded by a $4$-cycle.  If $\Sigma$ is a cylinder and both of its boundary cycles $B_1$ and $B_2$ have length $4$, then we let $H=B_1\cup B_2$.
The graph $H$ clearly satisfies the conclusion of the theorem.
Otherwise, let $B_1$, \ldots, $B_t$ be the boundary cycles of $G$ of length $4$, and for $1\le i\le t$, let $K_i$ be a $4$-cycle surrounding the cuff of $B_i$ such that the subset $\Sigma_i$ of $\Sigma$ between $B_i$ and $K_i$ is maximal.
Note that for $i\neq j$, we have $\Sigma_i\cap \Sigma_j=\emptyset$, since $G$ does not contain an essential subgraph with at most $8$ edges and $\Sigma$ is not a cylinder with two boundary $4$-cycles.
Without loss of generality, we can assume that $B_i$ and $K_i$ are vertex-disjoint for $1\le i\le t'$ and share a vertex $v_i$ for $t'+1\le i\le t$.  For $1\le i\le t$, let $v_i$ be an arbitary vertex of $K_i$.
Let $\Sigma'=\overline{\Sigma\setminus \bigcup_{i=1}^{t'}\Sigma_i}$ and $G'=G\cap \Sigma'$.  Note that $\Sigma'$ is homeomorphic to $\Sigma$.  Let $G''$ be obtained from $G'$ by, for $1\le i\le t$, splitting the vertex
$v_i$ into two vertices $v'_i$ and $v''_i$ and adding a new vertex $v'''_i$ adjacent to $v'_i$ and $v''_i$ (drawn in the boundary of $\Sigma'$), so that $G''$ contains no non-contractible $4$-cycles.

Let $H_1$ be the subgraph of $G''$ obtained using Lemma~\ref{lemma-almstruct}, and let $H_2$ be the subgraph of $H_1$ consisting of the boundary cycles of $G''$ and of all boundary walks of faces of $H_1$ of length greater than $4$.
Since $w_\eta(H_1)\le 5/4k+\eta s(\Sigma)$ and since each face of length greater than $4$ is incident with at most $4113w_\eta(f)$ edges, it follows that
$H_2$ has at most $5/4k+4113(5/4k+\eta s(\Sigma))$ edges.  Furthermore, for each face $h$ of $H_2$, either $h$ is a face of $H_1$, and thus every precoloring of its boundary extends to a $3$-coloring of $G''_h$,
or all faces of $H_1$ contained in $H$ have length $4$.  Since $G''$ does not contain non-contractible $4$-cycles, and since every contractible $4$-cycle bounds a face of $G$ by the assumptions of Theorem~\ref{thm-struct},
it follows that $G''_h$ is a quadrangulation in this case.

Let $\nu(\Sigma,k)\ge 5$ be function defined so that for each surface $\Sigma$ and integer $k\ge 0$,
Theorem~\ref{thm-main} holds with $\nu\colonequals \nu(\Sigma,k)$.
Let $H_3$ be obtained from $H_2$ as follows: for each face $h$ of $H_2$ such that $G''_h$ is a quadrangulation,
we apply Lemma~\ref{lemma-qstruc} to $G''_h$ and replace the face $h$ by the resulting subgraph.
Hence, the size of $H_3$ is bounded by some constant depending only on $\Sigma$ and $k$, and every face $h$ of $H_3$ satisfies one of the following:
\begin{itemize}
\item $h$ is a face of $H_1$, and thus $h$ satisfies (a) of Theorem~\ref{thm-struct}, or
\item $G_h$ is a quadrangulation and $h$ is an open cylinder, and thus $h$ satisfies (c) of Theorem~\ref{thm-struct}, or
\item $G_h$ is a boundary-linked quadrangulation with no connected essential subgraph with fewer than $\nu(\Sigma_h,k_h)$ edges, where $k_h$ is the sum of the lengths of the boundary cycles of $G_h$.
In this case Theorem~\ref{thm-main} implies that $h$ satisfies (b) of Theorem~\ref{thm-struct}.
\end{itemize}

Finally, let $H$ be obtained from $H_3$ by contracting edges $v'_iv'''_i$ and $v'_iv'''_i$ and adding the cycle $B_i$ for $1\le i\le t'$.  Note that the face of $G''$ incident with $v'_iv'''_iv''_i$
does not have length $4$, and thus the face of $H_3$ incident with this path satisfies (a) of Theorem~\ref{thm-struct}; this is not changed by eliminating the vertex $v'''_i$ of degree two.
Furthermore, the faces of $H$ incident with $B_1$, \ldots, $B_{t'}$ satisfy (d) of Theorem~\ref{thm-struct}.
\end{proof}

Before we prove Corollary~\ref{cor-ew}, we need some results on coloring planar graphs.

\begin{theorem}[Gimbel and Thomassen~\cite{gimbel}]\label{thm-disk}
Let $G$ be a triangle-free graph embededded in a disk $\Delta$ with a boundary cycle $B$ of length at most $6$.
If some precoloring of $B$ does not extend to a $3$-coloring of $G$, then $|B|=6$ and
$G$ has a subgraph that quadrangulates $\Delta$.
\end{theorem}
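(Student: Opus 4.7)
The plan is to proceed by induction on $|V(G)|$, using Gr\"otzsch's theorem as the base ingredient. First I would perform several standard reductions. Since $G$ is triangle-free and $B$ is a cycle, $|B|\ge 4$. We may assume $G$ is $2$-connected (else apply induction to each block meeting $B$) and has no chord of $B$ (a chord splits $\Delta$ into two disks with strictly shorter boundary cycles, and induction applies after recombining). Interior vertices of degree at most $2$ may be removed, since any extension of the coloring of the smaller graph can be completed to such a vertex. If $G$ contains a non-facial $4$-cycle $C$, it splits $\Delta$ into an inner disk $\Delta_1$ and an outer disk $\Delta_2$ containing $B$; applying induction to $(\Delta_2,B)$ yields a $3$-coloring extending $\psi$ on the outer side whose restriction to $C$ has winding number $0$ (since $|C|=4$), and then Gr\"otzsch's theorem plus the case $|B|=4$ handles the inner side. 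After these reductions we may assume $G$ has no chord of $B$, no non-facial $4$-cycle, and minimum interior degree at least three.

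For $|B|\le 5$, I would show every precoloring extends by identifying same-colored boundary vertices at distance $2$ in $B$ and applying Gr\"otzsch's theorem to the resulting planar triangle-free (multi)graph. When $|B|=4$, the precoloring uses either two colors (pattern $\psi=(1,2,1,2)$), in which case we identify $b_1\sim b_3$ and $b_2\sim b_4$, or three colors, in which case two antipodal boundary vertices have equal color and we identify them. The absence of chords and of non-facial $4$-cycles guarantees that the identification does not create a triangle (a new triangle would correspond to a forbidden length-$2$ path of chord type or to a non-facial $4$-cycle). When $|B|=5$ all three colors must appear on $B$, so some pair $b_i,b_{i+2}$ has the same color; the same identification argument works.

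For $|B|=6$ the obstruction finally appears, and this is the main step. Up to rotation and symmetry there are just a few precoloring patterns on $B$; in each I would attempt to identify a pair $b_i,b_{i+2}$ of equal $\psi$-color at distance $2$ in $B$. If some such identification preserves triangle-freeness, induction on the identified graph (of strictly fewer vertices) yields an extension of $\psi$. So I may assume every candidate identification is blocked, meaning each such pair $b_i,b_{i+2}$ already shares an interior common neighbor $v_{i,i+2}\in V(G)\setminus V(B)$. Exploiting the $2$-connectedness, the absence of chords of $B$, and the absence of non-facial $4$-cycles, I would argue that these common neighbors together with $B$ and the $4$-cycles $b_iv_{i,i+2}b_{i+2}x$ they span must fit together to cover $\Delta$ by $4$-faces, yielding a subgraph of $G$ that quadrangulates $\Delta$ with $B$ as its outer boundary.

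The hard part is this last step: converting the local obstruction (every same-color distance-$2$ pair shares a common neighbor) into the global conclusion that a quadrangulating subgraph exists. It requires going through the (few) essentially distinct precoloring patterns on a $6$-cycle, tracking the forced common neighbors, and using planarity plus the reductions to rule out any configuration other than one that tiles $\Delta$ with $4$-faces. The remaining bookkeeping — checking triangle-freeness of each identification and verifying that Gr\"otzsch's theorem is applicable at every step — is routine once the reductions above are in place.
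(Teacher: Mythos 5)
The paper does not actually prove this statement; it cites it from Gimbel and Thomassen~\cite{gimbel}, so there is no in-paper proof to compare against. Your proposal therefore has to stand on its own, and as written it has several gaps.

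The most decisive one is the $|B|=5$ case. If you identify a same-colored pair $b_i\sim b_{i+2}$ on a $5$-cycle, the \emph{other} arc $b_{i+2}b_{i+3}b_{i+4}b_i$ has length $3$, so after the identification the three boundary edges $b_{i+2}b_{i+3}$, $b_{i+3}b_{i+4}$, $b_{i+4}b_i$ close up into a triangle $[b_i]\,b_{i+3}\,b_{i+4}$. The identified graph is not triangle-free, so Gr\"otzsch's theorem simply does not apply; ``the same identification argument'' does not go through. A second gap: for $|B|=4$ with three colors on $B$, after identifying the one same-colored antipodal pair to a vertex $b$, you still need a $3$-coloring in which $b,b_2,b_4$ receive three \emph{prescribed} distinct colors. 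Gr\"otzsch only gives \emph{some} $3$-coloring; since $b_2$ and $b_4$ are not adjacent in the identified graph they may well receive the same color, and you give no mechanism to fix that. Third, your claim that ``the absence of chords and of non-facial $4$-cycles guarantees that the identification does not create a triangle'' is not justified: a path $b_i\,v\,w\,b_{i+2}$ of length three through two interior vertices is neither a chord nor a non-facial $4$-cycle, yet it produces a triangle $[b_i]vw$ after identifying $b_i\sim b_{i+2}$, and it is not obviously excluded by your reductions. Finally, you yourself flag the real content of the theorem --- showing that when $|B|=6$ and every same-color distance-$2$ pair has a common interior neighbor, these forced neighbors assemble into a quadrangulating subgraph --- as ``the hard part,'' and you do not carry it out; this is precisely where the theorem's strength lies, and a proof sketch that defers it is not a proof. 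In short: the skeleton (induction plus reductions plus boundary identifications) is a reasonable starting point, but the identifications need to be replaced or repaired on $5$- and $6$-cycles (e.g.\ via gadget vertices added in the outer face so that the identified graph stays triangle-free), the precoloring-extension step needs more than bare Gr\"otzsch, and the $|B|=6$ structural analysis has to be done in full.
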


As a corollary, we obtain the following.

\begin{lemma}\label{lemma-colfloat}
Let $G$ be a connected triangle-free plane graph and let $B$ be the boundary walk of its outer face.
Then $G$ has a $3$-coloring $\psi_1$ with the winding number satisfying $|\omega_{\psi_1}(B)|\le 1$.  Furthermore, if
$G$ does not contain a subgraph $Q$ such that $B\subseteq Q$ and all inner faces of $Q$ have length $4$,
then $G$ has a $3$-coloring $\psi_2$ with $1\le |\omega_{\psi_2}(B)|\le 2$.
\end{lemma}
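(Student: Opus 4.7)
First, observe that if $G$ contains a subgraph $Q$ with $B \subseteq Q$ and all inner faces of $Q$ of length $4$, then $Q$ quadrangulates the closed disk bounded by $B$, and Corollary~\ref{cor-win0} forces $\omega_\psi(B) = 0$ for every $3$-coloring $\psi$ of $Q$. Since any $3$-coloring of $G$ restricts to a $3$-coloring of $Q$, we obtain $\omega_\psi(B) = 0$ for every $3$-coloring $\psi$ of $G$. In this case part~1 follows immediately from Gr\"otzsch's theorem applied to $G$, and the hypothesis of part~2 is vacuously violated. Hence I may assume throughout that no such $Q$ exists; in particular $G$ has at least one inner face $f$ of length at least $5$.

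For part~1, I would proceed by induction on $|E(G)|$. The base case is $G = B$, a cycle of length at least $4$ since $G$ is triangle-free: if $|B|$ is even, take a proper $2$-coloring, giving $\omega = 0$; if $|B|$ is odd (hence $|B| \ge 5$), take a $3$-coloring with $\delta_\psi(B) = \pm 3$, giving $|\omega| = 1$. For the inductive step, first reduce to the $2$-connected case using block decomposition, handling each block of $G$ separately and then combining the resulting colorings at cut vertices without changing the overall winding number on $B$. Then either $G$ contains a short interior path $P$ of length at most $6$ joining two vertices of $B$ and dividing the disk into two subdisks, or it does not. In the first case, I would choose a precoloring of $P$ whose winding number on a fixed direction is small, apply Theorem~\ref{thm-disk} to extend the precoloring in each subdisk, and verify that the two contributions to $\omega_\psi(B)$ cancel to at most $1$ in absolute value. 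In the second case, the absence of short interior separators means that a single well-chosen interior edge may be removed, induction applied to the smaller graph, and the edge restored by a Kempe-chain adjustment that does not inflate $|\omega(B)|$.

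For part~2, I would begin with the coloring $\psi_1$ produced by part~1 and the inner face $f$ of length at least $5$. The goal is to perform a local modification of $\psi_1$ near $f$ that alters $\omega_\psi(B)$ by exactly $\pm 2$. One natural route is a Kempe-chain swap of two colors restricted to a component meeting the boundary of $f$; another is to apply Theorem~\ref{thm-disk} to the subdisk consisting of $f$ together with a thin neighborhood, changing the precoloring on the boundary cycle of $f$. Since any cycle of length $\ge 5$ admits two $3$-colorings whose winding numbers differ by $\pm 2$ (for instance, the two colorings of a $5$-cycle with winding numbers $+1$ and $-1$), the modification produces $\psi_2$ with $\omega_{\psi_2}(B) - \omega_{\psi_1}(B) \in \{-2, +2\}$. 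Since $\omega_{\psi_1}(B) \in \{-1, 0, 1\}$, choosing the sign of the shift appropriately yields $1 \le |\omega_{\psi_2}(B)| \le 2$, as required.

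The main obstacle I foresee is in the inductive step of part~1: showing that the desired short separating path $P$ of length at most $6$ exists whenever $G$ is not yet reducible, and guaranteeing that the freedom in the precoloring of $P$ is sufficient to cancel winding numbers on $B$. This likely requires a case analysis on the sizes and arrangement of the inner faces incident with $B$, together with several appeals to Theorem~\ref{thm-disk}. A secondary difficulty in part~2 is verifying that the local modification near $f$ propagates consistently through all of $G$; the assumption that $G$ has no quadrangulation subgraph containing $B$ appears essential here, since a quadrangulation nested inside $G$ could ``absorb'' the modification via Corollary~\ref{cor-win0} and nullify the desired winding-number shift.
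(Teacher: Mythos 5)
Your proposal identifies the right ingredients (Gr\"otzsch's theorem, Theorem~\ref{thm-disk}, Corollary~\ref{cor-win0}) but both parts have genuine gaps. For part~1, as you yourself admit, the inductive step is unresolved: there is no argument that a separating path of length at most~$6$ exists when the graph is $2$-connected and otherwise irreducible, and the fallback of ``remove an interior edge, apply induction, restore it by a Kempe-chain adjustment that does not inflate $|\omega(B)|$'' is not an argument at all---Kempe recolorings can change $\omega_\psi(B)$ in uncontrolled ways and no mechanism for bounding the change is offered. For part~2, the proposed ``local modification near $f$'' rests on a misconception: $\omega_\psi(B)$ is, by definition, a function of the colors on $B$ alone, so any recoloring confined to a neighborhood of an interior face $f$ that does not alter colors on $B$ leaves $\omega_\psi(B)$ unchanged. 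Equivalently, by Observation~\ref{obs-win0}, you would need a recoloring that shifts the winding number around $\partial f$ by $\pm 2$ while leaving the winding number around every other inner face fixed; neither a Kempe swap nor a re-application of Theorem~\ref{thm-disk} to a small subdisk accomplishes this, since the perturbation must propagate all the way out to $B$ and you give no control over what happens along the way.

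The paper's proof avoids both problems by modifying the graph \emph{outside} $B$ rather than inside. It attaches to $B$ a two-layer annulus of $4$-faces (two concentric $k$-cycles with spokes joining corresponding vertices) plus a fan of chords $v_1v_4, v_1v_6, \ldots$ that telescope the new outer boundary down to a cycle $B'$ of length $4$ or $5$; since every face between $B$ and $B'$ has length $4$, Corollary~\ref{cor-win0} gives $\omega_\psi(B)=\omega_\psi(B')$ for every $3$-coloring $\psi$ of the enlarged graph, and Gr\"otzsch applied to the enlarged graph yields part~1 immediately because a $3$-coloring of a cycle of length at most $5$ has winding number in $\{-1,0,1\}$. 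For part~2, when $|B|$ is even, dropping the last chord turns the outer cycle into a $6$-cycle; one prescribes on it a coloring of winding number~$2$ and applies Theorem~\ref{thm-disk} to the \emph{entire} enlarged graph, which is precisely where the hypothesis that $G$ contains no quadrangulating subgraph $Q\supseteq B$ enters. The key idea you are missing is that this hypothesis is a global constraint and must be fed into Theorem~\ref{thm-disk} for the whole graph with an enlarged $6$-cycle boundary, not for a local subdisk around some long inner face.
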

\begin{proof}
Let $B=b_1b_2\ldots b_k$.  Let $G'$ be the graph obtained from $G$ by adding two cycles $v'_1v'_2\ldots v'_k$ and $v_1v_2\ldots v_k$
and edges $b_iv'_i$ and $v'_iv_i$ ($1\le i\le k$) and $v_1v_{2+2i}$ ($1\le i\le \lfloor k/2\rfloor - 2$).
Let $B'=v_{2\lfloor k/2\rfloor - 2}v_{2\lfloor k/2\rfloor - 1}\ldots v_1$.
Note that $4\le |B'|\le 5$.  By Gr\"otzsch's Theorem, $G'$ has a $3$-coloring $\psi$.  Note that
$|\omega_\psi(B')|\le 1$ and by Corollary~\ref{cor-win0}, $\omega_\psi(B)=\omega_\psi(B')$.
Hence, we can choose $\psi_1$ as the restriction of $\psi$ to $G$.

Now, suppose that $G$ does not contain a subgraph $Q$ as described in the statement of the lemma,
and in particular $k\ge 5$.  If $k$ is odd, then $|\omega_\psi(B')|=1$ and the claim of Lemma~\ref{lemma-colfloat} follows
by setting $\psi_2=\psi_1$.
Suppose that $k$ is even.  Let $G''=G'-v_1v_{k-2}$ and $B''=v_{k-4}v_{k-3}\ldots v_1$.
We have $|B''|=6$ and $G''$ does not contain a quadrangulation, and thus by Theorem~\ref{thm-disk},
it has a $3$-coloring $\psi$ such that $\psi(v_{k-4})=\psi(v_{k-1})=1$, $\psi(v_{k-3})=\psi(v_k)=2$ and
$\psi(v_{k-2})=\psi(v_1)=3$.  Note that $\omega_\psi(B)=\omega_\psi(B'')=2$, and thus we can set $\psi_2$ to be the restriction
of $\psi$ to $G$.
\end{proof}

Now, we are ready to deal with the graphs of large edge-width.

\begin{proof}[Proof of Corollary~\ref{cor-ew}.]
By Gr\"otzsch's theorem, we can assume that $\Sigma$ is not the sphere.
Let $\gamma=\max(5,\beta+1)$, where $\beta$ is the constant of Theorem~\ref{thm-struct}.  We can assume that $G$ is $4$-critical,
and thus by Theorem~\ref{thm-disk}, every $4$-cycle in $G$ bounds a face.  Let $H$ be the subgraph of $G$ as in Theorem~\ref{thm-struct}.
Since every cycle in $H$ has length less than $\gamma$, it follows that all cycles in $H$ are contractible.  Therefore, $H$ has exactly
one face $h$ such that $\Sigma_h$ has non-zero genus.  Let $G_0$ be the subgraph of $G$ drawn in the closure of $h$, and let $G_1$, \ldots,
$G_m$, be the subgraphs of $G$ drawn in the components of $\Sigma\setminus h$.  Observe that for $1\le i\le m$, $G_i$ is a plane graph
with the outer face bounded by a closed walk $B_i$, such that $B_i$ is one of the facial walks of $h$.

The face $h$ satisfies (a) or (b) of Theorem~\ref{thm-struct}.  If it satisfies (a), then we can $3$-color each of the graphs $G_1$, \ldots, $G_k$ arbitrarily
using Gr\"otzsch's theorem and extend the coloring to $G_0$.  Hence suppose that $h$ satisfies (b), i.e., $G_h$ is a quadrangulation
and any precoloring of $B_1\cup \ldots\cup B_m$ which satisfies the winding number constraint extends to a $3$-coloring of $G_0$.
Since the dual of $G_h$ has even number of vertices of odd degree, it follows that even number of the boundary walks $B_1$, \ldots, $B_m$ has odd length.

If $\Sigma$ is orientable, then $3$-color $G_1$, \ldots, $G_m$ using Lemma~\ref{lemma-colfloat} so that the winding number of each of the boundary walks
is $0$ or $\pm 1$.  Note that by permuting colors, a coloring with winding number $1$ on some walk $W$ can be transformed to a coloring with winding number $-1$ on $W$.
Therefore, we can choose the colorings so that the sum of winding numbers of $B_1$, \ldots, $B_m$ is $0$, and thus it satisfies the winding number constraint in $G_0$.
Thus, we can extend it to a $3$-coloring of $G$.

Suppose now that $\Sigma$ is non-orientable.  If $G$ is a quadrangulation, then the assumptions of Corollary~\ref{cor-ew} imply that it satisfies the winding number
constraint, and thus $G$ is $3$-colorable by Theorem~\ref{thm-main}.  Hence, suppose that say $G_1$ has an inner face of length greater than $4$.
Again, choose the coloring $\psi$ of $G_1\cup \ldots\cup G_m$ so that the winding number of each of the boundary walks is $0$ or $\pm 1$ and the sum of the winding numbers
of boundary walks is $0$.  If this coloring is parity-compliant in $G_0$, then we can extend it to a $3$-coloring of $G$.  Otherwise, we alter the coloring of $G_1$.
If $|\omega_\psi(B_1)|=1$, say $\omega_\psi(B_1)=-1$, we permute the colors in the coloring of $G_1$ so that $\omega_\psi(B_1)=1$.
If $\omega_\psi(B_1)=0$, we use Lemma~\ref{lemma-colfloat} to find a coloring of $G_1$ so that $\omega_\psi(B_1)=2$.
In both cases, we obtain a $3$-coloring of $G_1\cup \ldots\cup G_m$ such that the sum of winding numbers of boundary cycles is $2$, and thus it is parity-compliant in $G_0$
and can be extended to a $3$-coloring of $G$.
\end{proof}

\bibliographystyle{acm}
\bibliography{sesty}

\end{document}